\definecolor{green(ryb)}{rgb}{0.173, 0.627, 0.173}
\definecolor{azure}{rgb}{0.0, 0.5, 1.0}
\newcommand{\nosemic}{\renewcommand{\@endalgocfline}{\relax}}
\newcommand{\dosemic}{\renewcommand{\@endalgocfline}{\algocf@endline}}
\let\oldnl\nl
\newcommand{\nonl}{\renewcommand{\nl}{\let\nl\oldnl}}
\newcommand{\highlight}[1]{\textbf{#1}}
\newtheorem{thm}{Theorem}[section]
\newtheorem{prop}[thm]{Proposition}
\newtheorem{definition}[thm]{Definition}
\newtheorem{cor}[thm]{Corollary}
\newtheorem{rec}[thm]{Recipe}
\def\RR{{\mathbb{R}}}
\newcommand{\cM}{{\mathcal{M}}}
\newcommand{\cB}{{\mathcal{B}}}
\newcommand{\cE}{{\mathcal{E}}}
\newcommand{\supp}{\text{supp}\,}
\newcommand{\NtwoS}{{N$_2$S}}
\newcommand{\NtwoStwo}{{N$_2$S$_2$}}
\newcommand{\cN}{\mathcal{N}}
\newcommand{\unilateral}{one-directional}
\journal{TBA}
\begin{document}

\begin{frontmatter}


\title{Adaptive refinement with locally linearly independent LR B-splines: Theory and applications\tnoteref{funding}}
\tnotetext[funding]{This work has received funding from the European Union's Horizon 2020 Research and Innovation Programme (grant agreement No 675789)
and from the MIUR Excellence Department Project awarded to the Department of Mathematics, University of Rome Tor Vergata (CUP E83C18000100006).
The authors are members of Gruppo Nazionale per il Calcolo Scientifico, Istituto Nazionale di Alta Matematica.}

\author[Si,uio]{Francesco Patrizi\corref{cor1}}
\ead{francesco.patrizi@sintef.no}
\author[TV]{Carla Manni}
\ead{manni@mat.uniroma2.it}
\author[TV]{Francesca Pelosi}
\ead{pelosi@mat.uniroma2.it}
\author[TV]{Hendrik Speleers}
\ead{speleers@mat.uniroma2.it}
\address[Si]{Department of Applied Mathematics and Cybernetics, SINTEF, Oslo, Norway}
\address[uio]{Department of Mathematics, University of Oslo, Oslo, Norway}
\address[TV]{Department of Mathematics, University of Rome Tor Vergata, Rome, Italy}
\cortext[cor1]{Corresponding Author}

\begin{abstract}
In this paper we describe an adaptive refinement strategy for LR B-splines. The presented strategy ensures, at each iteration, local linear independence of the obtained set of LR B-splines. This property is then exploited in two applications: the construction of efficient quasi-interpolation schemes and the numerical solution of elliptic problems using the isogeometric Galerkin method.
\end{abstract}

\begin{keyword}
Adaptive refinement \sep LR B-splines \sep local linear independence \sep quasi-interpolation \sep isogeometric analysis.
\end{keyword}
\end{frontmatter}



\section{Introduction}
Since the '70s, curves and surfaces in engineering are usually expressed by means of computer aided design (CAD) technologies, such as B-splines and non-uniform rational B-splines (NURBS). Thanks to properties like nonnegativity, local support and partition of unity, they allow for an easy control and modification of the geometries they describe,
and this motivates their undisputed success as main modeling tools for objects with complex shapes in engineering; see, e.g., \cite{PieglT1997,CohenRE2001,cetraro} and references therein.
On the other hand, B-splines also provide a very efficient representation of smooth piecewise polynomial spaces, and so are a popular ingredient in the construction of  approximation schemes; see, e.g., \cite{deboor,schumaker,lychemanni} and references therein.

More recently, the advent of isogeometric analysis (IgA) has integrated spline and CAD technologies into finite element analysis (FEA); see, e.g.,  \cite{IgABook,IgAacta}. IgA aims to unify the geometric description of the domain of the differential problem with its numerical resolution, in order to expedite the simulation process and gaining in accuracy. In addition to the properties listed above, B-splines and NURBS feature other qualities appreciated in this context, such as (local) linear independence and high global smoothness.

Tensor structures admit a simple but powerful multivariate extension of univariate splines and B-splines. On the other hand, they lack adequate local refinement. The constantly increasing demand for higher precision in simulations and reverse engineering processes requires the possibility to apply adaptive local refinement strategies, in order to reduce the approximation error while keeping the computational costs low.
This request for adaptivity, triggered the interest in new formulations of B-splines and NURBS, still based on local tensor structures \cite{HBsplines,Tsplines,PHTsplines,ASTsplines,THBsplines,paperLR,PBsplines}. All these new classes of functions are defined on locally refined meshes, in which T-vertices in the interior of the domain are allowed, the so-called T-meshes.

Locally refined B-splines, or in short LR B-splines \cite{paperLR}, are one of these new formulations, and their definition is inspired by the knot insertion refinement process of tensor B-splines. These latter are defined on global knot vectors, one per direction. The insertion of a new knot in a knot vector corresponds to a line segment in the mesh crossing the entire domain. This refines all the B-splines whose supports are crossed. Instead, LR B-splines are defined on local knot vectors and the insertion of a new knot is always performed with respect to a particular LR B-spline.
As a consequence, the LR B-spline definition is consistent with the tensor B-spline definition when the underlying mesh at the end of the process is a tensor mesh, and the formulation of LR B-splines remains broadly similar to classical tensor B-splines even though they allow for local refinements. This makes them one of the most elegant extensions of univariate B-splines on local tensor structures.

LR B-splines possess almost all the properties of classical tensor B-splines. Unfortunately, they are not always linearly independent.
To this day, it is not yet known what are the precise conditions on the locally refined mesh to ensure a linearly independent set of LR B-splines, but some progress has been made in this direction.
In \cite{paperLR} an efficient algorithm to seek and destroy linear dependence relations has been introduced, but it does not handle every possible locally refined mesh. In \cite{LRdependence} a first analysis on the necessary conditions for encountering a linear dependence relation has been presented. There, it has also been proved that, for any bidegree, a linear dependence relation in the LR B-spline set involves at least 8 functions.
In \cite{someproperties,meshbressan} a characterization of the local linear independence of LR B-splines has been provided. Such a strong property is guaranteed only on locally refined meshes with strong constraints on the lengths and positions of the line segments that yield particular arrangements of the LR B-spline supports.
On the other hand, a practical adaptive refinement strategy to produce meshes with the local linear independence property is still missing in the literature. To the best of our knowledge, the only mesh construction that leads to a locally linearly independent set of LR B-splines can be found in \cite{meshbressan}. Such a construction, however, cannot be considered as a practical strategy because the regions to be refined and the maximal resolution, i.e., the sizes of the smallest cells in the domain induced by the mesh, must be chosen a priori.

In this paper, we describe a practical refinement strategy ensuring the local linear independence of the corresponding LR B-splines. Such a property is appealing as it admits, e.g., the construction of efficient quasi-interpolation schemes and the unisolvency of linear systems obtained by isogeometric discretization of differential problems based on such LR-splines.
The remainder of the paper is divided into 5 sections. Section~\ref{sec:LR-splines} contains the definition of LR B-splines and a summary of their main properties, whereas Section~\ref{sec:refinement} describes the mesh refinement strategy and is the core of the paper. Sections~\ref{sec:quasi-interpolation} and~\ref{sec:isogeometric-analysis} present applications of the refinement strategy in the context of quasi-interpolation and  isogeometric Galerkin discretizations of elliptic problems. We end in Section~\ref{sec:conclusion} with some concluding remarks.

Throughout the paper, we assume the reader to be familiar with the definition and main properties of (univariate) B-splines, in particular with the knot insertion procedure. An introduction to this topic can be found, e.g., in the review papers \cite{cetraro,lychemanni} or in the classical books \cite{deboor} and \cite{schumaker}.


\section{Locally refined B-splines}\label{sec:LR-splines}
In this section, we introduce locally refined B-splines, or in short LR B-splines, and discuss several of their properties, following the terminology
 from \cite{LRdependence}.
We denote by $\Pi_p$ the space of univariate polynomials of degree less than or equal to $p$, and by
 $\Pi_{\pmb{p}}$ the space of bivariate polynomials of degrees less than or equal to $\pmb{p} = (p_1, p_2)$ component-wise. Furthermore, we denote by
$B[\pmb{x}, \pmb{y}]$ the bivariate B-spline defined on the (local) knot vectors $\pmb{x} = (x_1, \ldots, x_{p_1+2})$ and $\pmb{y}=(y_1, \ldots, y_{p_2+2})$, where $x_i\leq x_{i+1}$ and $y_i\leq y_{i+1}$ for all $i$. The bidegree of $B[\pmb{x}, \pmb{y}]$ is $\pmb{p}=(p_1,p_2)$
and is implicitly specified by the length of $\pmb{x}$ and $\pmb{y}$.

In order to define LR B-splines, we first introduce the concept of box-partition.
\begin{definition}
Given an axis-aligned rectangle $\Omega \subseteq \RR^2$, a \highlight{box-partition} of $\Omega$ is a finite collection $\mathcal{E}$ of axis-aligned rectangles in $\Omega$ such that
\begin{enumerate}
\item $\mathring{\beta}_1 \cap \mathring{\beta}_2 = \emptyset$ for any $\beta_1, \beta_2 \in \mathcal{E}$, with $\beta_1 \neq \beta_2$;
\item $\bigcup_{\beta \in \mathcal{E}} \beta = \Omega$.
\end{enumerate}
\end{definition}
Given a box-partition $\mathcal{E}$, we define the \highlight{vertices} of $\mathcal{E}$ as the vertices of its elements.
A \highlight{meshline} is an axis-aligned segment contained in an edge of an element of $\mathcal{E}$, connecting two and only two vertices of $\mathcal{E}$ located at its end-points. The collection of all the meshlines of the box-partition is called \highlight{mesh}, and denoted by $\cM$.
A meshline can be expressed as the Cartesian product of a point in $\RR$ and a finite interval. Let $\alpha \in \RR$ be the value of such a point and let $k \in \{1,2\}$ be its position in the Cartesian product. If $k=1$ the meshline is vertical and if $k=2$ the meshline is horizontal. We sometimes write $k$-meshline to specify the direction of the meshline, and $(k,\alpha)$-meshline to specify exactly on which axis-parallel line in $\RR^2$ the meshline lies.
A vertex of $\mathcal{E}$ is called \highlight{T-vertex} if it is the intersection of two collinear meshlines and another meshline, say $\gamma$, orthogonal to them.
We call the T-vertex {vertical} if $\gamma$ is vertical,
and {horizontal} otherwise.

For defining splines of a certain bidegree $\pmb{p}=(p_1,p_2)$ and smoothness across the meshlines, we also need the notion of \highlight{multiplicity} of a meshline.
This is a positive integer associated with every meshline in $\cM$. For a $k$-meshline this number is assumed to be maximally $p_k+1$.
A meshline in $\cM$ has \highlight{full multiplicity} if its multiplicity is maximal, and we say that $\cM$ is \highlight{open} if every boundary meshline has full multiplicity.
If all the meshlines of the box-partition have the same multiplicity $m$ we say that $\cM$ has multiplicity $m$.
When the T-vertices of $\mathcal{E}$ occur only on $\partial \Omega$ and all collinear meshlines have the same multiplicity, the corresponding mesh is called \highlight{tensor mesh}.
Figure~\ref{boxp} shows an example of a box-partition and its associated mesh.

\begin{figure}[t!]\centering
\subfloat[]{
\begin{tikzpicture}[scale=3.1]
\fill[red!60] (0,.66) -- (.5,.66) -- (.5, 1) -- (0,1) -- cycle;
\fill[cyan!50] (0,0) -- (.5,0) -- (.5, .66) -- (0,.66) -- cycle;
\fill[orange!60] (.5, 0) -- (1,0) -- (1,.33) -- (.5, .33) --cycle;
\fill[yellow!60] (.5, .33) -- (1,.33) -- (1,.66) -- (.5, .66) --cycle;
\fill[green!60] (.5, .66) -- (1.5,.66) -- (1.5,1) -- (.5, 1) --cycle;
\fill[purple!60] (1, 0) -- (1.5,0) -- (1.5,.66) -- (1, .66) --cycle;
\draw (0,0) -- (1.5,0) -- (1.5, 1) -- (0,1) --cycle;
\draw (0,.66) --(1.5, .66);
\draw (.5, .33) -- (1, .33);
\draw (.5, 0) -- (.5, 1);
\draw (1, 0) -- (1, .66);
\draw (.25, .83) node{$\beta_1$};
\draw (1, .83) node{$\beta_2$};
\draw (.25, .33) node{$\beta_3$};
\draw (.75, .17) node{$\beta_5$};
\draw (.75, .5) node{$\beta_4$};
\draw (1.25, .33) node{$\beta_6$};
\draw[white] (0,-.08);
\end{tikzpicture}}\qquad\qquad
\subfloat[]{
\begin{tikzpicture}[scale=3.1]
\tikzset{%
  block/.style    = {draw=black, fill=white, rectangle, minimum height = .05cm,
    minimum width = .05cm}
    }
\draw (0,0) -- (1.5,0) -- (1.5, 1) -- (0,1) --cycle;
\draw (0,.66) --(1.5, .66);
\draw (.5, .33) -- (1, .33);
\draw (.5, 0) -- (.5, 1);
\draw (1, 0) -- (1, .66);
\draw (0,.33) node[block]{\footnotesize{$1$}};
\draw (0,.845) node[block]{\footnotesize{$1$}};
\draw (.25,0) node[block]{\footnotesize{$1$}};
\draw (.25,.66) node[block]{\footnotesize{$1$}};
\draw (.25,1) node[block]{\footnotesize{$1$}};
\draw (.5,.165) node[block]{\footnotesize{$1$}};
\draw (.5, .495) node[block]{\footnotesize{$3$}};
\draw (.5,.845) node[block]{\footnotesize{$1$}};
\draw (.75,.66) node[block]{\footnotesize{$2$}};
\draw (.75,.33) node[block]{\footnotesize{$4$}};
\draw (.75,0) node[block]{\footnotesize{$2$}};
\draw (1.25,.66) node[block]{\footnotesize{$2$}};
\draw (1,1) node[block]{\footnotesize{$1$}};
\draw (1,.165) node[block]{\footnotesize{$1$}};
\draw (1,.495) node[block]{\footnotesize{$1$}};
\draw (1.5,.33) node[block]{\footnotesize{$1$}};
\draw (1.5,.845) node[block]{\footnotesize{$1$}};
\draw (1.25,0) node[block]{\footnotesize{$2$}};
\end{tikzpicture}
}
\caption{Example of a box-partition $\cE$ of a rectangle $\Omega$ in (a), and the mesh corresponding to $\cE$ in (b). The meshlines are identified by squares showing the associated multiplicities.}\label{boxp}
\end{figure}
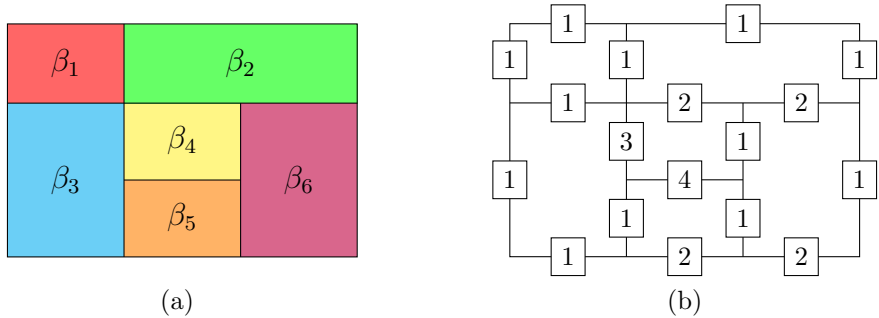

Given a bivariate B-spline $B[\pmb{x}, \pmb{y}]$, let $x_{i_1}, \ldots, x_{i_r}$ and $y_{j_1}, \ldots, y_{j_s}$ be the distinct knots in $\pmb{x}$ and $\pmb{y}$, respectively. The mesh \begin{equation}\label{M(x,y)}
\begin{split}
\cM(\pmb{x}, \pmb{y}) := \{&\{x_{i_\ell}\} \times [y_{j_n}, y_{j_{n+1}}] : \,\ell=1, \ldots, r;\, n=1, \ldots, s-1\}\,\\ &\cup\,\{[x_{i_n}, x_{i_{n+1}}]\times\{y_{j_\ell}\} : \ell =1,\ldots, s;\, n=1, \ldots, r-1\}
\end{split}
\end{equation}
is a tensor mesh in $\supp B[\pmb{x}, \pmb{y}]$. The multiplicities of the meshlines in $\cM(\pmb{x}, \pmb{y})$ are given by the multiplicities of the knots of $B[\pmb{x}, \pmb{y}]$. For instance, the $(1,x_{i_\ell})$-meshlines $\{x_{i_\ell}\}\times[y_{j_n}, y_{j_{n+1}}]$ for $n=1, \ldots, s-1$ have all the same multiplicity equal to the multiplicity of $x_{i_\ell}$ in $\pmb{x}$.
\begin{definition}
Given a mesh $\cM$ and a B-spline $B[\pmb{x}, \pmb{y}]$, we say that $B[\pmb{x}, \pmb{y}]$ has \highlight{support} on $\cM$ if
\begin{itemize}
\item the meshlines in $\cM(\pmb{x}, \pmb{y})$ can be obtained as unions of meshlines in $\cM$,
and
\item their multiplicities are less than or equal to the multiplicities of the corresponding meshlines in $\cM$.
\end{itemize}
Furthermore, we say that $B[\pmb{x}, \pmb{y}]$ has \highlight{minimal support} on $\cM$ if
\begin{itemize}
\item it has support on $\cM$,
\item the multiplicities of the interior meshlines in $\cM(\pmb{x}, \pmb{y})$ are equal to the multiplicities of the corresponding meshlines in $\cM$, and
\item there is no collection $\gamma$ of collinear meshlines in $\cM\backslash\cM(\pmb{x}, \pmb{y})$ such that $\supp B[\pmb{x}, \pmb{y}]\backslash \gamma$ is not connected.
\end{itemize}
\end{definition}
Figure~\ref{exMS} shows examples of B-splines of bidegree $(2,2)$ with support on a mesh of multiplicity $1$. In particular, the B-splines in (b)--(c) have minimal support, whereas the support of the B-spline in (d) can be disconnected by the collection of meshlines $\gamma$, visualized by dashed lines in the figure.
\begin{figure}[t!]
    \centering
\subfloat[]{\centering
\begin{tikzpicture}[scale=3.1]
\draw (0,0) -- (1,0) -- (1,1) -- (0,1) -- cycle;
\draw (0,.2) --(1, .2);
\draw (0,.8) -- (1,.8);
\draw (.6, 0) -- (.6, 1);
\draw (.8, 0) -- (.8, 1);
\draw (0,.6) -- (.8, .6);
\draw (.2, .2) -- (.2, 1);
\draw (0, .7) -- (.6, .7);
\draw (.4, 0) -- (.4, .8);
\draw (.5, .2) -- (.5, 1);
\end{tikzpicture}}\qquad
\subfloat[]{\centering
\begin{tikzpicture}[scale=3.1]
\filldraw[draw = red,ultra thick, fill =red!60] (0,0) -- (.8,0) -- (.8,.8) -- (0,.8) -- cycle;
\draw[red,ultra thick] (0, .2) -- (.8, .2);
\draw[red,ultra thick] (0,.6) -- (.8, .6);
\draw[red,ultra thick] (.4,0) -- (.4, .8);
\draw[red,ultra thick] (.6, 0) -- (.6, .8);
\draw (0,0) -- (1,0) -- (1,1) -- (0,1) -- cycle;
\draw (0,.2) --(1, .2);
\draw (0,.8) -- (1,.8);
\draw (.6, 0) -- (.6, 1);
\draw (.8, 0) -- (.8, 1);
\draw (0,.6) -- (.8, .6);
\draw (.2, .2) -- (.2, 1);
\draw (0, .7) -- (.6, .7);
\draw (.4, 0) -- (.4, .8);
\draw (.5, .2) -- (.5, 1);
\end{tikzpicture}}\qquad
\subfloat[]{\centering
\begin{tikzpicture}[scale=3.1]
\filldraw[draw = red,ultra thick, fill =red!60] (0,0) -- (1,0) -- (1,1) -- (0,1) -- cycle;
\draw[red,ultra thick] (0, .2) -- (1, .2);
\draw[red,ultra thick] (0,.8) -- (1, .8);
\draw[red,ultra thick] (.6,0) -- (.6, 1);
\draw[red,ultra thick] (.8, 0) -- (.8, 1);
\draw (0,0) -- (1,0) -- (1,1) -- (0,1) -- cycle;
\draw (0,.2) --(1, .2);
\draw (0,.8) -- (1,.8);
\draw (.6, 0) -- (.6, 1);
\draw (.8, 0) -- (.8, 1);
\draw (0,.6) -- (.8, .6);
\draw (.2, .2) -- (.2, 1);
\draw (0, .7) -- (.6, .7);
\draw (.4, 0) -- (.4, .8);
\draw (.5, .2) -- (.5, 1);
\end{tikzpicture}}\qquad
\subfloat[]{\centering
\begin{tikzpicture}[scale=3.1]
\filldraw[draw = red,ultra thick, fill =red!60] (0,.2) -- (.6,.2) -- (.6,1) -- (0,1) -- cycle;
\draw[red,ultra thick] (0, .6) -- (.6, .6);
\draw[red,ultra thick] (0,.8) -- (.6, .8);
\draw[red,ultra thick] (.2,.2) -- (.2, 1);
\draw[red,ultra thick] (.5, .2) -- (.5, 1);
\draw (0,0) -- (1,0) -- (1,1) -- (0,1) -- cycle;
\draw (0,.2) --(1, .2);
\draw (0,.8) -- (1,.8);
\draw (.6, 0) -- (.6, 1);
\draw (.8, 0) -- (.8, 1);
\draw (0,.6) -- (.8, .6);
\draw (.2, .2) -- (.2, 1);
\draw (0, .7) -- (.6, .7);
\draw (.4, 0) -- (.4, .8);
\draw (.5, .2) -- (.5, 1);
\draw[blue, dashed, ultra thick] (0, .7) -- (.6, .7);
\end{tikzpicture}}
\caption{Support of B-splines of bidegree $(2,2)$ on a mesh $\cM$ of multiplicity $1$. The mesh is shown in (a). The B-splines whose supports are depicted in (b) and (c) have minimal support on $\cM$. The tensor meshes defined by the B-spline's knots are highlighted with thicker lines. On the other hand, the B-spline in (d) does not have minimal support on $\cM$: the collection of meshlines contained in the dashed line disconnects its support.}\label{exMS}
\end{figure}
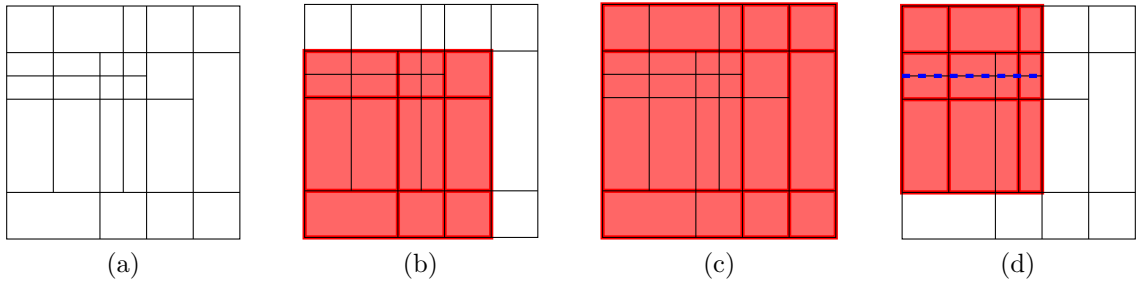

Given a mesh $\cM$ and a B-spline $B[\pmb{x}, \pmb{y}]$ with support in $\cM$, assume that it does not have minimal support on $\cM$. Then, there exists a collection of $(k,\alpha)$-meshlines $\gamma$ such that $\supp B[\pmb{x}, \pmb{y}] \backslash \gamma$ is not connected and either $\gamma$ is in $\cM \backslash \cM(\pmb{x}, \pmb{y})$ or $\gamma \subseteq \cM(\pmb{x}, \pmb{y})$, i.e., $\alpha$ is an internal knot of $\pmb{x}$ for $k=1$ or $\pmb{y}$ for $k=2$, but its meshlines have lower multiplicities in $\cM(\pmb{x}, \pmb{y})$ than in $\cM$. Assume that the meshlines in $\gamma$ have all the same multiplicity $m$ in $\cM$.
Denoting by $\mu(\alpha) \geq 0$ the number of times $\alpha$ appears in the knot vector of $B[\pmb{x}, \pmb{y}]$ in the $k$-th direction, then $m - \mu(\alpha)$ is strictly positive as $B[\pmb{x}, \pmb{y}]$ has support, but not minimal support, on $\cM$. One could consider such $\alpha$ as an extra knot, of multiplicity $m-\mu(\alpha)$, with respect to the knot vector of $B[\pmb{x},\pmb{y}]$ in the $k$-th direction (in $\pmb{x}$ if $k=1$ and in $\pmb{y}$ if $k=2$), and perform knot insertion on $B[\pmb{x},\pmb{y}]$. If $\alpha$ was already a knot of $B[\pmb{x}, \pmb{y}]$, so $\mu(\alpha) \geq 1$, this means rising its multiplicity by $m-\mu(\alpha)$. The resulting generated B-splines will still have support on $\cM$ and eventually they will also have minimal support on $\cM$. As an example, the collection $\gamma$ highlighted with dashed lines in Figure~\ref{exMS}(d) is made of $(2,\alpha)$-meshlines, for some $\alpha$, of multiplicity 1. Such $\alpha$ can be inserted as new knot of multiplicity 1 in the knot vector in the $y$-direction of the considered B-spline to refine it in two B-splines via knot insertion.

The LR B-splines are generated by means of the above procedure. We start by considering a coarse tensor mesh and we refine it by inserting collections of collinear meshlines, one at a time, of the same multiplicity. On the initial mesh we consider the standard tensor B-splines and whenever a B-spline in our collection has no longer minimal support during the mesh refinement process, we refine it by using the knot insertion procedure. The LR B-splines will be the final set of B-splines produced by this algorithm.
In the following definitions we formalize this by describing the mesh refinement process in our framework.
\begin{definition}
Given a box-partition $\mathcal{E}$ and an axis-aligned segment $\gamma$, we say that $\gamma$ \highlight{traverses} $\beta \in \mathcal{E}$ if $\gamma \subseteq \beta$ and the interior of $\beta$ is divided into two parts by $\gamma$, i.e., $\beta\backslash \gamma$ is not connected.
A \highlight{split} is a finite union of contiguous and collinear axis-aligned segments $\gamma = \cup_i \gamma_i$ such that every $\gamma_i$ either is a meshline of the box-partition or traverses some $\beta\in \mathcal{E}$.  A mesh $\cM$ has \highlight{constant splits} if each split in it is made of meshlines of the same multiplicity.
\end{definition}
Like for meshlines, we sometimes write $k$-split with $k \in \{1,2\}$ to specify the direction of the split or $(k,\alpha)$-split to specify on what axis-parallel line in $\RR^2$ the split lies.

When a split $\gamma$ is inserted in a box-partition $\mathcal{E}$, any traversed $\beta \in \mathcal{E}$ is replaced by the two subrectangles $\beta_1, \beta_2$ given by the closures of the connected components of $\beta \backslash \gamma$. The resulting new box-partition will be denoted by $\mathcal{E}+\gamma$ and its corresponding mesh by $\cM+\gamma$.
We also assume that a positive integer $\mu_\gamma$ has been assigned to any split $\gamma$. The multiplicities of the meshlines in $\cM\cap(\cM+\gamma)$ and not contained in $\gamma$ are unchanged. Contrarily,
the multiplicities of the meshlines in $\gamma$ that were already in $\cM$ are rised by $\mu_\gamma$, and the new meshlines in $\gamma$ have multiplicity equal to $\mu_\gamma$ on $\cM + \gamma$.

The LR B-splines are defined on a class of meshes with constant splits, called {LR-meshes}. Thus, from now on, we restrict our attention to meshes that have constant splits. In particular, we note that when refining a mesh $\cM$ by inserting a split $\gamma$, either $\gamma$ is made solely of new meshlines or it is made solely of meshlines already on $\cM$, in order for $\cM + \gamma$ to have constant splits.

\begin{definition}
Given a mesh $\cM$ with constant splits, a B-spline $B[\pmb{x}, \pmb{y}]$ with support on $\cM$ and a split $\gamma$, we say that $\gamma$ \highlight{traverses} $B[\pmb{x}, \pmb{y}]$ if the interior of $\supp B[\pmb{x}, \pmb{y}]$ is divided into two parts by $\gamma$, i.e., $\supp B[\pmb{x}, \pmb{y}]\backslash \gamma$ is not connected and either $\gamma$ is in $\cM \backslash \cM(\pmb{x}, \pmb{y})$ or $\gamma \subseteq \cM(\pmb{x}, \pmb{y})$ but the multiplicity of its meshlines is lower in $\cM(\pmb{x}, \pmb{y})$ than in $\cM$.
\end{definition}

We are now ready to define the mesh refinement process and the LR B-splines. The meshes generated by this procedure will be called LR-meshes.
\begin{definition}\label{defLR}
Given a bidegree $\pmb{p}=(p_1,p_2)$, let $\cM_1$ be a tensor mesh such that the set of standard tensor B-splines of bidegree $\pmb{p}$ on $\cM_1$ is nonempty, and denote it by $\mathcal{B}_1$. We then define a sequence of meshes $\cM_{2},\cM_{3},\ldots$ and corresponding function sets $\mathcal{B}_2,\mathcal{B}_3,\ldots$ as follows.
For $i=1,2,\ldots$, let $\gamma_i$ be a split such that $\cM_{i+1} := \cM_i + \gamma_i$ has constant splits and such that the support of at least one B-spline in $\cB_i$ is traversed by a split in $\cM_{i+1}$. On this refined mesh $\cM_{i+1}$, the new set of B-splines $\cB_{i+1}$ is constructed by the following algorithm.
\begin{enumerate}
  \item Initialize the set by $\mathcal{B}_{i+1} \leftarrow \mathcal{B}_i$.
  \item As long as there exists $B[\pmb{x}^j, \pmb{y}^j]\in \mathcal{B}_{i+1}$ with no minimal support on $\cM_{i+1}$:
  \begin{enumerate}
    \item Apply knot insertion: $\exists B[\pmb{x}_{1}^j, \pmb{y}_{1}^j], B[\pmb{x}_{2}^j, \pmb{y}_{2}^j] : B[\pmb{x}^j, \pmb{y}^j] = \alpha_1B[\pmb{x}_1^{j}, \pmb{y}_1^{j}]+ \alpha_2 B[\pmb{x}_2^{j}, \pmb{y}_2^{j}]$.\vspace{.15cm}
    \item Update the set: $\mathcal{B}_{i+1} \leftarrow (\mathcal{B}_{i+1} \backslash \{B[\pmb{x}^j, \pmb{y}^j]\})\cup\{B[\pmb{x}_1^{j}, \pmb{y}_1^{j}], B[\pmb{x}_2^{j}, \pmb{y}_2^{j}]\}$.
  \end{enumerate}
\end{enumerate}
The mesh generated at each step is called \highlight{LR-mesh} and the corresponding function set is called \highlight{LR B-spline set}.
\end{definition}

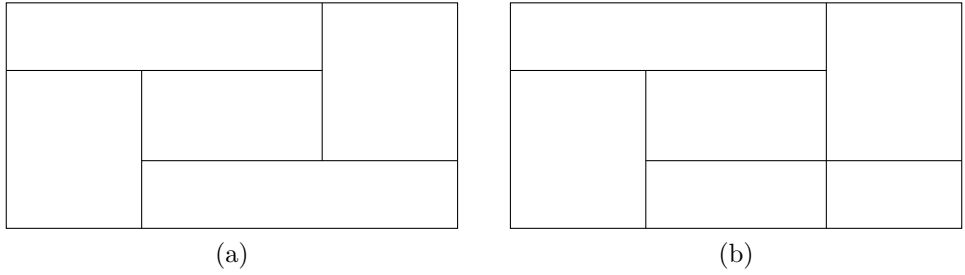
\begin{figure}[t!]
\centering
\subfloat[]{
\begin{tikzpicture}[scale=6]
\draw (0,0) -- (1,0) -- (1,.5) -- (0,.5) -- cycle;
\draw (0,.35) -- (.7,.35);
\draw (.7,.5) -- (.7,.15);
\draw (1,.15) -- (.3,.15);
\draw (.3,0) -- (.3,.35);
\end{tikzpicture}
}\quad
\subfloat[]{
\begin{tikzpicture}[scale=6]
\draw (0,0) -- (1,0) -- (1,.5) -- (0,.5) -- cycle;
\draw (0,.35) -- (.7,.35);
\draw (.7,.5) -- (.7,0);
\draw (1,.15) -- (.3,.15);
\draw (.3,0) -- (.3,.35);
\end{tikzpicture}
}
\caption{Two meshes. Assume that the boundary has a multiplicity large enough so that it is possible to define a B-spline of bidegree $\pmb{p}$ on it. Then, the mesh in (a) is not an LR-mesh because it cannot be built as a sequence of split insertions. The mesh in (b) is an LR-mesh similar to the one in (a).}\label{notLRmesh}
\end{figure}

Obviously not every mesh is an LR-mesh. For instance, one could consider meshes that do not have constant splits or meshes that cannot be built through a sequence of split insertions as the mesh depicted in Figure~\ref{notLRmesh}(a).
In general, the mesh refinement process producing a given LR-mesh $\cM=\cM_N$ is not unique. Indeed, the split insertion ordering can often be changed. Nevertheless, the LR B-spline set on $\cM$ is well defined because it is independent of such insertion ordering, as proved in \cite[Theorem~3.4]{paperLR}.

Given an LR-mesh, the corresponding LR B-splines have several desirable properties for applications. By their definition, it is clear that
\begin{itemize}
  \item they are nonnegative,
  \item they have minimal support, and
  \item they can be expressed by the LR B-splines on finer LR-meshes using nonnegative coefficients (provided by the knot insertion procedure).
\end{itemize}
Furthermore, it is possible to scale them by means of positive weights so that they also form a partition of unity; see \cite[Section 7]{paperLR}.

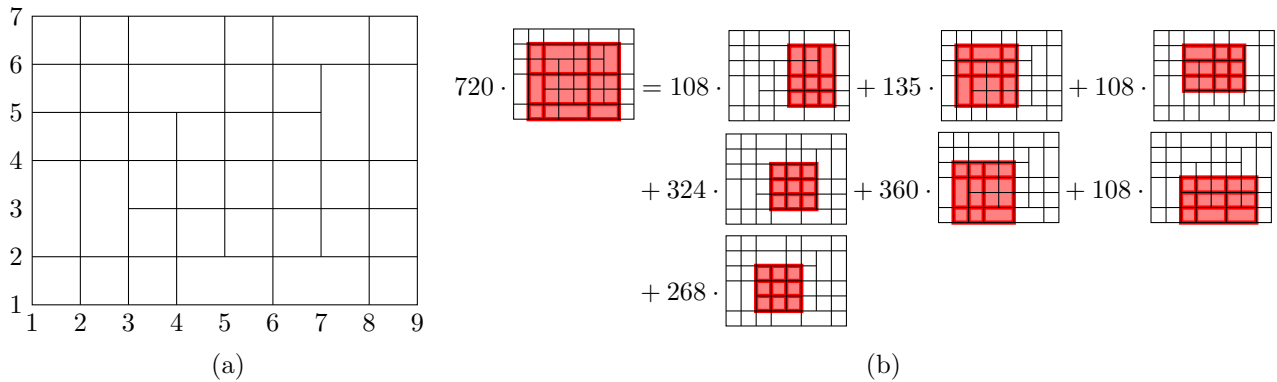
\begin{figure}[t!]
\subfloat[]{\hspace{-0.3cm}
\begin{tikzpicture}[scale=3.2]\footnotesize
\draw (0,0) -- (1.6,0) -- (1.6,1.2) -- (0,1.2) -- cycle;
\draw (.2,0) -- (.2,1.2);
\draw (.4,0) -- (.4, 1.2);
\draw (.6,0) -- (.6,.8);
\draw (.8,.2) -- (.8,1.2);
\draw (1,0) -- (1,1.2);
\draw (1.2,.2) -- (1.2,1);
\draw (1.4,0) -- (1.4,1.2);
\draw (0,.2) -- (1.6,.2);
\draw (.4,.4) -- (1.6,.4);
\draw (0,.6) -- (1.6,.6);
\draw (0,.8) -- (1.2,.8);
\draw (0,1) -- (1.6,1);
\draw (0,0) node[below]{1};
\draw (.2,0) node[below]{2};
\draw (.4,0) node[below]{3};
\draw (.6,0) node[below]{4};
\draw (.8,0) node[below]{5};
\draw (1,0) node[below]{6};
\draw (1.2,0) node[below]{7};
\draw (1.4,0) node[below]{8};
\draw (1.6,0) node[below]{9};
\draw (0,0) node[left]{1};
\draw (0,.2) node[left]{2};
\draw (0,.4) node[left]{3};
\draw (0,.6) node[left]{4};
\draw (0,.8) node[left]{5};
\draw (0,1) node[left]{6};
\draw (0,1.2) node[left]{7};
\end{tikzpicture}
}
\subfloat[]{\hspace{-3.5cm}
\begin{minipage}{0.9\textwidth}\vspace{-4.5cm}\footnotesize
\begin{equation*}\begin{split}
    \hspace{3cm}\footnotesize{720}\cdot\raisebox{-1em}{\begin{tikzpicture}
\filldraw[draw=red, ultra thick, fill=red!50] (.2,0) -- (1.4, 0) -- (1.4, 1) -- (.2,1) -- cycle;
\draw[red, ultra thick] (.2,.2) -- (1.4,.2);
\draw[red, ultra thick] (.2,.6) -- (1.4,.6);
\draw[red, ultra thick] (.4,0) -- (.4,1);
\draw[red, ultra thick] (1,0) -- (1,1);
\draw (0,0) -- (1.6,0) -- (1.6,1.2) -- (0,1.2) -- cycle;
\draw (.2,0) -- (.2,1.2);
\draw (.4,0) -- (.4, 1.2);
\draw (.6,0) -- (.6,.8);
\draw (.8,.2) -- (.8,1.2);
\draw (1,0) -- (1,1.2);
\draw (1.2,.2) -- (1.2,1);
\draw (1.4,0) -- (1.4,1.2);
\draw (0,.2) -- (1.6,.2);
\draw (.4,.4) -- (1.6,.4);
\draw (0,.6) -- (1.6,.6);
\draw (0,.8) -- (1.2,.8);
\draw (0,1) -- (1.6,1);
\end{tikzpicture}} &= \footnotesize{108}\cdot\raisebox{-1em}{\begin{tikzpicture}
\filldraw[draw=red, ultra thick, fill=red!50] (.8,.2) -- (1.4, .2) -- (1.4, 1) -- (.8,1) -- cycle;
\draw[red, ultra thick] (.8,.4) -- (1.4,.4);
\draw[red, ultra thick] (.8,.6) -- (1.4,.6);
\draw[red, ultra thick] (1,.2) -- (1,1);
\draw[red, ultra thick] (1.2,.2) -- (1.2,1);
\draw (0,0) -- (1.6,0) -- (1.6,1.2) -- (0,1.2) -- cycle;
\draw (.2,0) -- (.2,1.2);
\draw (.4,0) -- (.4, 1.2);
\draw (.6,0) -- (.6,.8);
\draw (.8,.2) -- (.8,1.2);
\draw (1,0) -- (1,1.2);
\draw (1.2,.2) -- (1.2,1);
\draw (1.4,0) -- (1.4,1.2);
\draw (0,.2) -- (1.6,.2);
\draw (.4,.4) -- (1.6,.4);
\draw (0,.6) -- (1.6,.6);
\draw (0,.8) -- (1.2,.8);
\draw (0,1) -- (1.6,1);
\end{tikzpicture}}+\footnotesize{135}\cdot \raisebox{-1em}{\begin{tikzpicture}
\filldraw[draw=red, ultra thick, fill=red!50] (.2,.2) -- (1, .2) -- (1, 1) -- (.2,1) -- cycle;
\draw[red, ultra thick] (1,.8) -- (.2,.8);
\draw[red, ultra thick] (1,.6) -- (.2,.6);
\draw[red, ultra thick] (.4,.2) -- (.4,1);
\draw[red, ultra thick] (.8,.2) -- (.8,1);
\draw (0,0) -- (1.6,0) -- (1.6,1.2) -- (0,1.2) -- cycle;
\draw (.2,0) -- (.2,1.2);
\draw (.4,0) -- (.4, 1.2);
\draw (.6,0) -- (.6,.8);
\draw (.8,.2) -- (.8,1.2);
\draw (1,0) -- (1,1.2);
\draw (1.2,.2) -- (1.2,1);
\draw (1.4,0) -- (1.4,1.2);
\draw (0,.2) -- (1.6,.2);
\draw (.4,.4) -- (1.6,.4);
\draw (0,.6) -- (1.6,.6);
\draw (0,.8) -- (1.2,.8);
\draw (0,1) -- (1.6,1);
\end{tikzpicture}}+ \footnotesize{108}\cdot \raisebox{-1em}{\begin{tikzpicture}
\filldraw[draw=red, ultra thick, fill=red!50] (.4,.4) -- (1.2, .4) -- (1.2, 1) -- (.4,1) -- cycle;
\draw[red, ultra thick] (.4,.8) -- (1.2,.8);
\draw[red, ultra thick] (.4,.6) -- (1.2,.6);
\draw[red, ultra thick] (.8,.4) -- (.8,1);
\draw[red, ultra thick] (1,.4) -- (1,1);
\draw (0,0) -- (1.6,0) -- (1.6,1.2) -- (0,1.2) -- cycle;
\draw (.2,0) -- (.2,1.2);
\draw (.4,0) -- (.4, 1.2);
\draw (.6,0) -- (.6,.8);
\draw (.8,.2) -- (.8,1.2);
\draw (1,0) -- (1,1.2);
\draw (1.2,.2) -- (1.2,1);
\draw (1.4,0) -- (1.4,1.2);
\draw (0,.2) -- (1.6,.2);
\draw (.4,.4) -- (1.6,.4);
\draw (0,.6) -- (1.6,.6);
\draw (0,.8) -- (1.2,.8);
\draw (0,1) -- (1.6,1);
\end{tikzpicture}}\\&+  \footnotesize{324}\cdot \raisebox{-1em}{\begin{tikzpicture}
\filldraw[draw=red, ultra thick, fill=red!50] (.6,.2) -- (1.2, .2) -- (1.2, .8) -- (.6,.8) -- cycle;
\draw[red, ultra thick] (.6,.4) -- (1.2,.4);
\draw[red, ultra thick] (.6,.6) -- (1.2,.6);
\draw[red, ultra thick] (.8,.2) -- (.8,.8);
\draw[red, ultra thick] (1,.2) -- (1,.8);
\draw (0,0) -- (1.6,0) -- (1.6,1.2) -- (0,1.2) -- cycle;
\draw (.2,0) -- (.2,1.2);
\draw (.4,0) -- (.4, 1.2);
\draw (.6,0) -- (.6,.8);
\draw (.8,.2) -- (.8,1.2);
\draw (1,0) -- (1,1.2);
\draw (1.2,.2) -- (1.2,1);
\draw (1.4,0) -- (1.4,1.2);
\draw (0,.2) -- (1.6,.2);
\draw (.4,.4) -- (1.6,.4);
\draw (0,.6) -- (1.6,.6);
\draw (0,.8) -- (1.2,.8);
\draw (0,1) -- (1.6,1);
\end{tikzpicture}}+\footnotesize{360}\cdot \raisebox{-1em}{\begin{tikzpicture}
\filldraw[draw=red, ultra thick, fill=red!50] (.2,0) -- (1, 0) -- (1, .8) -- (.2,.8) -- cycle;
\draw[red, ultra thick] (.2,.2) -- (1,.2);
\draw[red, ultra thick] (.2,.6) -- (1,.6);
\draw[red, ultra thick] (.4,0) -- (.4,.8);
\draw[red, ultra thick] (.6,0) -- (.6,.8);
\draw (0,0) -- (1.6,0) -- (1.6,1.2) -- (0,1.2) -- cycle;
\draw (.2,0) -- (.2,1.2);
\draw (.4,0) -- (.4, 1.2);
\draw (.6,0) -- (.6,.8);
\draw (.8,.2) -- (.8,1.2);
\draw (1,0) -- (1,1.2);
\draw (1.2,.2) -- (1.2,1);
\draw (1.4,0) -- (1.4,1.2);
\draw (0,.2) -- (1.6,.2);
\draw (.4,.4) -- (1.6,.4);
\draw (0,.6) -- (1.6,.6);
\draw (0,.8) -- (1.2,.8);
\draw (0,1) -- (1.6,1);
\end{tikzpicture}}+  \footnotesize{108}\cdot \raisebox{-1em}{\begin{tikzpicture}
\filldraw[draw=red, ultra thick, fill=red!50] (.4,0) -- (1.4, 0) -- (1.4, .6) -- (.4,.6) -- cycle;
\draw[red, ultra thick] (.4,.2) -- (1.4,.2);
\draw[red, ultra thick] (.4,.4) -- (1.4,.4);
\draw[red, ultra thick] (.6,0) -- (.6,.6);
\draw[red, ultra thick] (1,0) -- (1,.6);
\draw (0,0) -- (1.6,0) -- (1.6,1.2) -- (0,1.2) -- cycle;
\draw (.2,0) -- (.2,1.2);
\draw (.4,0) -- (.4, 1.2);
\draw (.6,0) -- (.6,.8);
\draw (.8,.2) -- (.8,1.2);
\draw (1,0) -- (1,1.2);
\draw (1.2,.2) -- (1.2,1);
\draw (1.4,0) -- (1.4,1.2);
\draw (0,.2) -- (1.6,.2);
\draw (.4,.4) -- (1.6,.4);
\draw (0,.6) -- (1.6,.6);
\draw (0,.8) -- (1.2,.8);
\draw (0,1) -- (1.6,1);
\end{tikzpicture}}\\&+\footnotesize{268}\cdot \raisebox{-1em}{\begin{tikzpicture}
\filldraw[draw=red, ultra thick, fill=red!50] (.4,.2) -- (1, .2) -- (1, .8) -- (.4,.8) -- cycle;
\draw[red, ultra thick] (.4,.4) -- (1,.4);
\draw[red, ultra thick] (.4,.6) -- (1,.6);
\draw[red, ultra thick] (.6,.2) -- (.6,.8);
\draw[red, ultra thick] (.8,.2) -- (.8,.8);
\draw (0,0) -- (1.6,0) -- (1.6,1.2) -- (0,1.2) -- cycle;
\draw (.2,0) -- (.2,1.2);
\draw (.4,0) -- (.4, 1.2);
\draw (.6,0) -- (.6,.8);
\draw (.8,.2) -- (.8,1.2);
\draw (1,0) -- (1,1.2);
\draw (1.2,.2) -- (1.2,1);
\draw (1.4,0) -- (1.4,1.2);
\draw (0,.2) -- (1.6,.2);
\draw (.4,.4) -- (1.6,.4);
\draw (0,.6) -- (1.6,.6);
\draw (0,.8) -- (1.2,.8);
\draw (0,1) -- (1.6,1);
\end{tikzpicture}}
    \end{split}
\end{equation*}
\end{minipage}
}
\caption{Example of linear dependence in the LR B-spline set. The parameterization of an LR-mesh $\cM$ of multiplicity $1$ is considered in (a), and the linear dependence relation among some of the LR B-splines of bidegree $(2,2)$ defined on $\cM$ is illustrated in (b). The LR B-splines are represented by means of their supports on the mesh and the tensor meshes induced by their knots are highlighted with thicker meshlines.}\label{exLD}
\end{figure}

Unfortunately, they are not always linearly independent. Figure~\ref{exLD} shows an example of linear dependence among the LR B-splines of bidegree $(2,2)$ defined on an LR-mesh of multiplicity $1$.
To this day, it is not yet known what are the precise conditions on the LR-mesh to ensure a linearly independent set of LR B-splines.

In \cite{meshbressan} a characterization of the local linear independence of LR B-splines has been provided. Such a strong property is guaranteed only on LR-meshes with strong constraints on the split lengths and positions that yield particular arrangements of the LR B-spline supports. 
This last statement is formalized in the following.
\begin{definition}\label{nesteddef}
Given a mesh $\cM$, let $B[\pmb{x}^1, \pmb{y}^1]$ and $B[\pmb{x}^2,\pmb{y}^2]$ be two different LR B-splines defined on $\cM$. We say that $B[\pmb{x}^2, \pmb{y}^2]$ is \highlight{nested} in $B[\pmb{x}^1, \pmb{y}^1]$, and we write $B[\pmb{x}^2, \pmb{y}^2] \preceq B[\pmb{x}^1, \pmb{y}^1]$, if
\begin{itemize}
\item $\supp B[\pmb{x}^2, \pmb{y}^2] \subseteq \supp B[\pmb{x}^1, \pmb{y}^1]$, and
\item any meshline $\gamma$ of $\cM$ in $\partial \supp B[\pmb{x}^1, \pmb{y}^1] \cap \partial \supp B[\pmb{x}^2, \pmb{y}^2]$ has a higher (or equal) multiplicity when considered in $\cM(\pmb{x}^1, \pmb{y}^1)$ than in $\cM(\pmb{x}^2, \pmb{y}^2)$.
\end{itemize}
An open mesh where no LR B-spline is nested is said to have the \highlight{non-nested support property}, or in short the \highlight{\NtwoS-property}.
\end{definition}

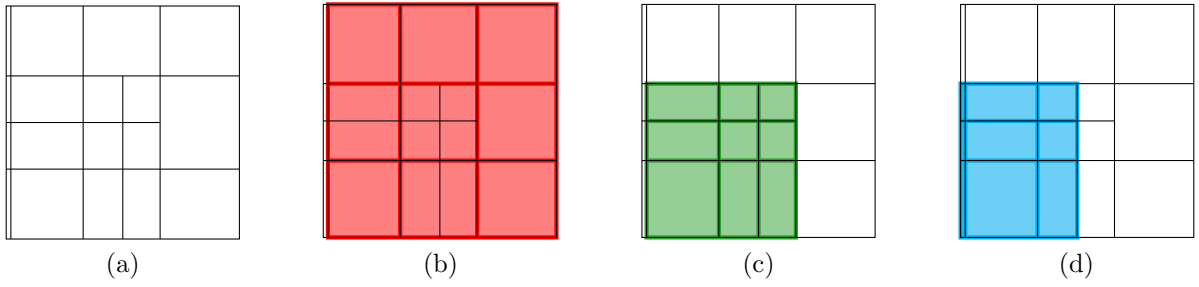
\begin{figure}[t!]
\centering
\subfloat[]{
\begin{tikzpicture}[scale=3.1]
\draw (0,0) -- (1,0) -- (1,1) -- (0,1) -- cycle;
\draw (.02,0) -- (.02,1);
\draw (.33, 0) -- (.33, 1);
\draw (.66, 0) -- (.66, 1);
\draw (0, .3) -- (1, .3);
\draw (0,.7) -- (1, .7);
\draw (.5, 0) -- (.5,.7);
\draw (0,.5) -- (.66,.5);
\end{tikzpicture}
}\qquad
\subfloat[]{
\begin{tikzpicture}[scale=3.1]
\filldraw[fill=red!50, draw = red, ultra thick] (.02,0) -- (1,0) -- (1,1) -- (.02,1) -- cycle;
\draw[red, ultra thick] (.02,.33) -- (1,.33);
\draw[red, ultra thick] (.02,.66) -- (1,.66);
\draw[red, ultra thick] (.33,0) -- (.33,1);
\draw[red, ultra thick] (.66,0) -- (.66,1);
\draw (0,0) -- (1,0) -- (1,1) -- (0,1) -- cycle;
\draw (.02,0) -- (.02,1);
\draw (.33, 0) -- (.33, 1);
\draw (.66, 0) -- (.66, 1);
\draw (0, .33) -- (1, .33);
\draw (0,.66) -- (1, .66);
\draw (.5, 0) -- (.5,.66);
\draw (0,.5) -- (.66,.5);
\end{tikzpicture}
}\qquad
\subfloat[]{
\begin{tikzpicture}[scale=3.1]
\filldraw[fill=green(ryb)!50, draw = green(ryb), ultra thick] (.02,0) -- (.66,0) -- (.66,.66) -- (.02,.66) -- cycle;
\draw[green(ryb), ultra thick] (.02,.33) -- (.66,.33);
\draw[green(ryb), ultra thick] (.02,.5) -- (.66,.5);
\draw[green(ryb), ultra thick] (.5,0) -- (.5,.66);
\draw[green(ryb), ultra thick] (.33,0) -- (.33,.66);
\draw (0,0) -- (1,0) -- (1,1) -- (0,1) -- cycle;
\draw (.02,0) -- (.02,1);
\draw (.33, 0) -- (.33, 1);
\draw (.66, 0) -- (.66, 1);
\draw (0, .33) -- (1, .33);
\draw (0,.66) -- (1, .66);
\draw (.5, 0) -- (.5,.66);
\draw (0,.5) -- (.66,.5);
\end{tikzpicture}
}\qquad
\subfloat[]{
\begin{tikzpicture}[scale=3.1]
\filldraw[fill=cyan!50, draw = cyan, ultra thick] (0,0) -- (.5,0) -- (.5,.66) -- (0,.66) -- cycle;
\draw[cyan, ultra thick] (0,.33) -- (.5,.33);
\draw[cyan, ultra thick] (0,.5) -- (.5,.5);
\draw[cyan, ultra thick] (.33,0) -- (.33,.66);
\draw[cyan, ultra thick] (.02,0) -- (.02,.66);
\draw (0,0) -- (1,0) -- (1,1) -- (0,1) -- cycle;
\draw (.02,0) -- (.02,1);
\draw (.33, 0) -- (.33, 1);
\draw (.66, 0) -- (.66, 1);
\draw (0, .33) -- (1, .33);
\draw (0,.66) -- (1, .66);
\draw (.5, 0) -- (.5,.66);
\draw (0,.5) -- (.66,.5);
\end{tikzpicture}
}\caption{Example of nested LR B-splines on the mesh $\cM$ shown in (a). All the meshlines have multiplicity~1 except those in the left edge of $\cM$, highlighted with a double line, which have multiplicity 2.  In (b)--(d) three LR B-splines, $B[\pmb{x}^1, \pmb{y}^1], B[\pmb{x}^2, \pmb{y}^2], B[\pmb{x}^3, \pmb{y}^3]$ respectively, of bidegree $(2,2)$ with minimal support on $\cM$ are represented by means of their supports and the tensor meshes induced by their knots. All the knots of these LR B-splines have multiplicity 1 except $x_1^3$ which has multiplicity 2. Therefore, $B[\pmb{x}^2, \pmb{y}^2] \preceq B[\pmb{x}^1, \pmb{y}^1]$ but $B[\pmb{x}^3, \pmb{y}^3] \npreceq B[\pmb{x}^2, \pmb{y}^2]$ and $B[\pmb{x}^3, \pmb{y}^3] \npreceq B[\pmb{x}^1, \pmb{y}^1]$, despite that $\supp B[\pmb{x}^3, \pmb{y}^3] \subseteq \supp B[\pmb{x}^2, \pmb{y}^2]$ and $\supp B[\pmb{x}^3, \pmb{y}^3] \subseteq\supp B[\pmb{x}^1, \pmb{y}^1]$, because the shared meshlines in the left edge of $\supp B[\pmb{x}^3, \pmb{y}^3]$, $\supp B[\pmb{x}^2, \pmb{y}^2]$ and $\supp B[\pmb{x}^1, \pmb{y}^1]$ have multiplicity 2 in $\cM(\pmb{x}^3, \pmb{y}^3)$ and multiplicity 1 in $\cM(\pmb{x}^2, \pmb{y}^2)$ and $\cM(\pmb{x}^1,\pmb{y}^1)$.}\label{nestedex}
\end{figure}

The definition of nested LR B-splines was formulated for the first time in \cite{someproperties}. Definition~\ref{nesteddef} is different but equivalent to it (see Appendix~\ref{sec:appendix}).
Figure~\ref{nestedex} shows an example of an LR B-spline nested into another.
%
%
The following result, presented in \cite{meshbressan}, relates the local linear independence of the LR B-splines to the \NtwoS-property of the mesh.
\begin{thm}
\label{TeoBressan}
Given a bidegree $\pmb{p}=(p_1,p_2)$, let $\cM$ be an open LR-mesh corresponding to a box-partition $\mathcal{E}$ and let $\mathcal{B}^{\mathcal{LR}}(\cM)$ be the set of LR B-splines of bidegree $\pmb{p}$ on $\cM$. The following statements are equivalent.
\begin{enumerate}
\item The elements of $\mathcal{B}^{\mathcal{LR}}(\cM)$ are locally linearly independent.
\item $\cM$ has the \NtwoS-property.
\item\label{TeoBressan:c} For any element $\beta \in \mathcal{E}$, the number of nonzero LR B-splines over $\beta$ satisfies
$$\#\{B \in \mathcal{B}^{\mathcal{LR}}(\cM) : \supp B\supseteq \beta\} = (p_1+1)(p_2+1).$$
\item The LR B-splines form a partition of unity, without the use of scaling weights.
\end{enumerate}
\end{thm}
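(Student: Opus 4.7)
The linchpin is a local spanning lemma: for every $\beta \in \mathcal{E}$, the restrictions $\{B|_\beta : B \in \mathcal{B}^{\mathcal{LR}}(\cM),\, \supp B \supseteq \beta\}$ span $\Pi_{\pmb{p}}$. To see this, let $\beta_0$ be the element of the initial tensor mesh $\cM_1$ containing $\beta$; the $(p_1+1)(p_2+1)$ tensor B-splines of $\cB_1$ active on $\beta_0$ restrict to a basis of $\Pi_{\pmb{p}}$, and by Definition~\ref{defLR} each of them is expressed as a non-negative combination of LR B-splines of $\cM$. Restricting this identity to $\beta$ shows that the LR B-splines with $\supp B \supseteq \beta$ already span $\Pi_{\pmb{p}}$. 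Since $\dim \Pi_{\pmb{p}} = (p_1+1)(p_2+1)$, the count of active LR B-splines over $\beta$ is always at least $(p_1+1)(p_2+1)$, with equality precisely when those restrictions are linearly independent on $\beta$. This immediately yields the equivalence (1) $\Leftrightarrow$ (3).

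For (3) $\Rightarrow$ (2) I argue by contrapositive: suppose $B_2 \preceq B_1$ in $\mathcal{B}^{\mathcal{LR}}(\cM)$. By Definition~\ref{nesteddef}, each knot of $(\pmb{x}^2,\pmb{y}^2)$ absent from $(\pmb{x}^1,\pmb{y}^1)$ corresponds to a meshline of $\cM$ interior to $\supp B_1$ of sufficient multiplicity; hence knot insertion into $B_1$ with exactly those extra knots yields a refinement $B_1 = \sum_i c_i T_i$ with $c_i > 0$ and $B_2$ among the children $T_i$. Fixing any $\beta \subseteq \supp B_2 \subseteq \supp B_1$, the children of $B_1$ active on $\beta$ are $(p_1+1)(p_2+1)$ tensor B-splines whose restrictions form a basis of $\Pi_{\pmb{p}}$, and each one is expressible as a non-negative combination of LR B-splines of $\cM$ active on $\beta$. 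By minimality of support, $B_1$ does not reappear in the expansion of any non-$B_2$ child, so the active LR B-spline set on $\beta$ must contain $B_1$ \emph{and} enough distinct LR B-splines to cover the remaining $(p_1+1)(p_2+1)-1$ local basis polynomials together with $B_2$: a dimension count forces the total to strictly exceed $(p_1+1)(p_2+1)$, contradicting (3).

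For (2) $\Rightarrow$ (1) and for (1) $\Leftrightarrow$ (4), the natural route is induction along the split-insertion sequence producing $\cM$ from $\cM_1$. Both properties hold trivially for standard tensor B-splines on $\cM_1$. A split $\gamma_i$ that preserves the \NtwoS-property refines only those LR B-splines it traverses via knot insertion; the non-nesting condition ensures that the newly produced children occupy exactly the local tensor basis slots vacated by their parents on each $\beta$, so the equality in (3) is preserved, which by the first paragraph is equivalent to (1). For the partition of unity, recall from~\cite[Section~7]{paperLR} that positive scaling weights $w_B$ with $\sum_B w_B B \equiv 1$ exist. On each $\beta$ where (3) holds the restrictions $\{B|_\beta\}$ form a basis of $\Pi_{\pmb{p}}$, so the identity $\sum_B w_B B|_\beta = 1$ uniquely determines the coefficients $w_B$, and a direct check against the knot-insertion recursion shows $w_B = 1$ precisely in the absence of nesting.

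The hardest parts will be the careful bookkeeping in (3) $\Rightarrow$ (2) and the inductive step of (2) $\Rightarrow$ (1): in both one must precisely relate the global combinatorics of nested supports to the local count on each element. Because Definition~\ref{defLR} allows non-unique split orderings, the inductive invariant must be stated as a property of the entire active LR B-spline set rather than of individual functions, and verifying that each \NtwoS-preserving split preserves the local-basis property on every element is the real crux of the argument.
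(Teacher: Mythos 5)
A preliminary remark: the paper does not prove Theorem~\ref{TeoBressan} at all --- it is quoted from \cite{meshbressan} --- so there is no in-paper proof to compare yours against. Judging your proposal on its own terms: the opening ``local spanning lemma'' (the active LR B-splines on any $\beta$ span $\Pi_{\pmb{p}}|_\beta$ because the initial tensor B-splines do and each is a non-negative combination of final LR B-splines) is correct and does yield (1) $\Leftrightarrow$ (3); this is the standard backbone of the result.

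The rest has genuine gaps. In (3) $\Rightarrow$ (2), your pivotal claim --- that the children of $B_1$ produced by inserting the extra knots of $B_2$ that are active on $\beta$ number $(p_1+1)(p_2+1)$ and restrict to a basis of $\Pi_{\pmb{p}}$ --- is false: already in one variable, inserting a single knot into a degree-$2$ B-spline yields two children, and on an element adjacent to the new knot only those two are active, short of $\dim\Pi_2=3$. The children of a \emph{single} B-spline never form a local basis; only the full tensor basis on the merged knot vectors does. The clean route you are circling is different: write $B_1=\sum_i c_i T_i$ with $c_i>0$ and $B_2$ among the $T_i$, expand each $T_i$ further into minimal-support LR B-splines with non-negative coefficients, note that $B_1$ cannot reappear on the right (all supports there are strictly smaller), and restrict to any $\beta\subseteq\supp B_2$: this is a non-trivial dependence among the active functions, which together with your spanning lemma forces the count on $\beta$ strictly above $(p_1+1)(p_2+1)$, giving $\lnot(2)\Rightarrow\lnot(1)$ and $\lnot(2)\Rightarrow\lnot(3)$ in one stroke. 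More seriously, (2) $\Rightarrow$ (1) --- the substantive direction --- is not proved. The induction ``along the split-insertion sequence'' is not available as stated: the intermediate meshes of a sequence producing an \NtwoS{} mesh need not themselves have the \NtwoS-property (the paper's own Algorithm~\ref{alg:N2S-mesh} passes through such intermediate meshes), so your inductive invariant can fail before the final step, and the assertion that the new children ``occupy exactly the local tensor basis slots vacated by their parents'' is never given mathematical content. The same applies to the unexecuted ``direct check'' for (1) $\Leftrightarrow$ (4). You correctly identify these as the crux, but identifying a gap does not close it: as written, the proposal establishes (1) $\Leftrightarrow$ (3) and, after repair, $\lnot(2)\Rightarrow\lnot(3)$, and nothing more.
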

An element of $\mathcal{E}$ for which item~\ref{TeoBressan:c} of Theorem~\ref{TeoBressan} holds is said to be \highlight{non-overloaded}. Note that $(p_1+1)(p_2+1)$ is the dimension of the polynomial space over the element.

In \cite{meshbressan} one can also find an algorithm to construct LR-meshes so that the \NtwoS-property is fulfilled. This approach, however, has a relevant drawback for practical purposes: the regions to be refined and the maximal resolution have to be chosen a priori. Moreover, the algorithm cannot be stopped prematurely, before having inserted all the splits determined initially.
In practice, one rarely knows in advance where the error will be large and how fine the mesh has to be chosen to reduce it under a certain tolerance.

In the next section, we present an alternative way to generate LR-meshes so that the \NtwoS-property is guaranteed.


\section{\NtwoS-structured mesh refinement strategy}\label{sec:refinement}
In this section, we define a local refinement strategy that ensures the \NtwoS-property for the obtained meshes. It consists of two steps. First, we apply the so-called structured mesh refinement, defined in \cite{johannessen}, to the LR B-splines whose contribution to the approximation error is larger than a given tolerance. Then, we slightly modify the obtained mesh by prolonging some splits, to recover the \NtwoS-property.
The meshes generated by this refinement are open meshes with internal meshlines of multiplicity one.

As opposed to the classical finite element method, in which the refinement is applied to the box-partition elements, the \highlight{structured mesh refinement} is a refinement applied to the function space, i.e., we select for refinement the LR B-splines contributing more to the approximation error rather than the box-partition elements where a larger error occurs.
This approach is justified by the fact that on an LR-mesh, any new split inserted must traverse at least the support of one LR B-spline. If we choose to select the elements where the error is larger, then the refinement has to be extended anyway to traverse the support of at least one LR B-spline containing the elements, resulting in a refinement of the LR B-spline basis. Moreover, since several LR B-splines contain such elements, those chosen for the refinement extension could be not those contributing more to the error, resulting in a suboptimal refinement, or we could refine more LR B-splines than necessary, wasting degrees of freedom.

Once the LR B-splines are selected, we refine them by halving the interval steps in their knot vectors. This corresponds to the insertion of a net of meshlines in the B-spline supports on the mesh. We therefore perform the LR B-spline generation algorithm described in Definition~\ref{defLR}. Every selected LR B-spline is fragmented into LR B-splines with smaller support and replaced by them. The LR-mesh obtained in this way will be called a \highlight{structured LR-mesh}.

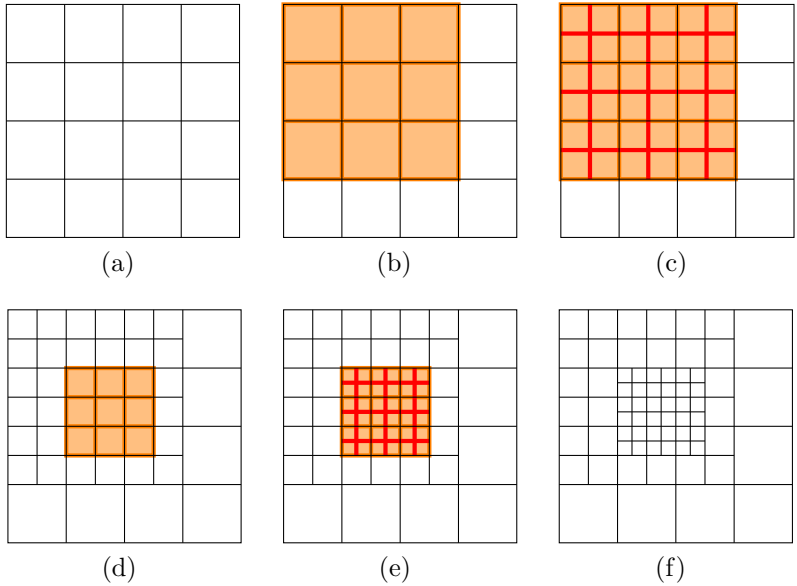
\begin{figure}[t!]
 \centering
 \subfloat[]{
 \begin{tikzpicture}[scale=3.1]
 \draw (0,0) -- (1,0) -- (1,1) -- (0,1) -- cycle;
 \draw (0,.25) -- (1,.25);
 \draw (0,.5) -- (1,.5);
 \draw (0,.75) -- (1,.75);
 \draw (.25,0) -- (.25,1);
 \draw (.5,0) -- (.5,1);
 \draw (.75,0) -- (.75,1);
 \end{tikzpicture}}\quad
 \subfloat[]{
 \begin{tikzpicture}[scale=3.1]
 \filldraw[draw = orange, ultra thick, fill=orange!50] (0,.25) -- (.75,.25) -- (.75,1) -- (0,1) -- cycle;
 \draw[orange, ultra thick] (0,.5) -- (.75,.5);
 \draw[orange, ultra thick] (0,.75) -- (.75,.75);
 \draw[orange, ultra thick] (.25,.25) -- (.25,1);
 \draw[orange, ultra thick] (.5,.25) -- (.5,1);
 \draw (0,0) -- (1,0) -- (1,1) -- (0,1) -- cycle;
 \draw (0,.25) -- (1,.25);
 \draw (0,.5) -- (1,.5);
 \draw (0,.75) -- (1,.75);
 \draw (.25,0) -- (.25,1);
 \draw (.5,0) -- (.5,1);
 \draw (.75,0) -- (.75,1);
 \end{tikzpicture}}\quad
  \subfloat[]{
 \begin{tikzpicture}[scale=3.1]
 \filldraw[draw = orange, ultra thick, fill=orange!50] (0,.25) -- (.75,.25) -- (.75,1) -- (0,1) -- cycle;
 \draw[orange, ultra thick] (0,.5) -- (.75,.5);
 \draw[orange, ultra thick] (0,.75) -- (.75,.75);
 \draw[orange, ultra thick] (.25,.25) -- (.25,1);
 \draw[orange, ultra thick] (.5,.25) -- (.5,1);
 \draw[red, ultra thick] (.125,.25) -- (.125,1);
 \draw[red, ultra thick] (.375,.25) -- (.375,1);
 \draw[red, ultra thick] (.625,.25) -- (.625,1);
 \draw[red, ultra thick] (0,.875) -- (.75,.875);
 \draw[red, ultra thick] (0,.375) -- (.75,.375);
 \draw[red, ultra thick] (0,.625) -- (.75,.625);
 \draw (0,0) -- (1,0) -- (1,1) -- (0,1) -- cycle;
 \draw (0,.25) -- (1,.25);
 \draw (0,.5) -- (1,.5);
 \draw (0,.75) -- (1,.75);
 \draw (.25,0) -- (.25,1);
 \draw (.5,0) -- (.5,1);
 \draw (.75,0) -- (.75,1);
 \end{tikzpicture}}\\
 \subfloat[]{
 \begin{tikzpicture}[scale=3.1]
 \filldraw[draw = orange, ultra thick, fill=orange!50] (.25,.375) -- (.625,.375) -- (.625,.75) -- (.25,.75) -- cycle;
 \draw[orange, ultra thick] (.25,.5) -- (.625,.5);
 \draw[orange, ultra thick] (.25,.625) -- (.625,.625);
 \draw[orange, ultra thick] (.375,.375) -- (.375,.75);
 \draw[orange, ultra thick] (.5,.375) -- (.5,.75);
  \draw (.125,.25) -- (.125,1);
 \draw (.375,.25) -- (.375,1);
 \draw (.625,.25) -- (.625,1);
 \draw (0,.875) -- (.75,.875);
 \draw (0,.375) -- (.75,.375);
 \draw (0,.625) -- (.75,.625);
 \draw (0,0) -- (1,0) -- (1,1) -- (0,1) -- cycle;
 \draw (0,.25) -- (1,.25);
 \draw (0,.5) -- (1,.5);
 \draw (0,.75) -- (1,.75);
 \draw (.25,0) -- (.25,1);
 \draw (.5,0) -- (.5,1);
 \draw (.75,0) -- (.75,1);
 \end{tikzpicture}}\quad
 \subfloat[]{
 \begin{tikzpicture}[scale=3.1]
 \filldraw[draw = orange, ultra thick, fill=orange!50] (.25,.375) -- (.625,.375) -- (.625,.75) -- (.25,.75) -- cycle;
 \draw[orange, ultra thick] (.25,.5) -- (.625,.5);
 \draw[orange, ultra thick] (.25,.625) -- (.625,.625);
 \draw[orange, ultra thick] (.375,.375) -- (.375,.75);
 \draw[orange, ultra thick] (.5,.375) -- (.5,.75);
 \draw[red, ultra thick] (.312,.375) -- (.312,.75);
 \draw[red, ultra thick] (.437,.375) -- (.437,.75);
 \draw[red, ultra thick] (.562,.375) -- (.562,.75);
 \draw[red, ultra thick] (.25,.437) -- (.625,.437);
 \draw[red, ultra thick] (.25,.562) -- (.625,.562);
   \draw[red, ultra thick] (.25,.687) -- (.625,.687);
 \draw (.125,.25) -- (.125,1);
 \draw (.375,.25) -- (.375,1);
 \draw (.625,.25) -- (.625,1);
 \draw (0,.875) -- (.75,.875);
 \draw (0,.375) -- (.75,.375);
 \draw (0,.625) -- (.75,.625);
 \draw (0,0) -- (1,0) -- (1,1) -- (0,1) -- cycle;
 \draw (0,.25) -- (1,.25);
 \draw (0,.5) -- (1,.5);
 \draw (0,.75) -- (1,.75);
 \draw (.25,0) -- (.25,1);
 \draw (.5,0) -- (.5,1);
 \draw (.75,0) -- (.75,1);
 \end{tikzpicture}}\quad
  \subfloat[]{
 \begin{tikzpicture}[scale=3.1]
 \draw (.312,.375) -- (.312,.75);
 \draw (.437,.375) -- (.437,.75);
 \draw (.562,.375) -- (.562,.75);
 \draw (.25,.437) -- (.625,.437);
 \draw (.25,.562) -- (.625,.562);
   \draw (.25,.687) -- (.625,.687);
 \draw (.125,.25) -- (.125,1);
 \draw (.375,.25) -- (.375,1);
 \draw (.625,.25) -- (.625,1);
 \draw (0,.875) -- (.75,.875);
 \draw (0,.375) -- (.75,.375);
 \draw (0,.625) -- (.75,.625);
 \draw (0,0) -- (1,0) -- (1,1) -- (0,1) -- cycle;
 \draw (0,.25) -- (1,.25);
 \draw (0,.5) -- (1,.5);
 \draw (0,.75) -- (1,.75);
 \draw (.25,0) -- (.25,1);
 \draw (.5,0) -- (.5,1);
 \draw (.75,0) -- (.75,1);
 \end{tikzpicture}}
 \caption{Two iterations of the structured mesh refinement of bidegree $(2,2)$. We consider the initial open tensor mesh with internal meshlines of multiplicity 1 in (a). Figure~(b) shows the support of an LR B-spline selected for refinement. We refine it by halving the interval steps in its knot vectors. This results in the insertion of a net of meshlines in the LR-mesh as shown in (c). In (d) we select another LR B-spline in the new set of LR B-splines  and we refine it as illustrated in (e). Figure~(f) depicts the final mesh obtained.
}\label{exSM}
\end{figure}
In summary, the structured mesh refinement consists of two steps:
\begin{enumerate}
  \item LR B-splines are selected to be refined and not box-partition elements;
  \item the interval steps of their knot vectors are halved to obtain the new LR-mesh.
\end{enumerate}
Figure~\ref{exSM} shows two iterations of such refinement.
In general, the structured mesh refinement does not generate LR-meshes with the \NtwoS-property. The LR-mesh in Figure~\ref{exSM}(f) is an example as explained in Figure~\ref{exSM2}.
Furthermore, the structured mesh refinement may produce linearly dependent sets of LR B-splines. Figure~\ref{LDstructured} shows an example for bidegree $(4,4)$.

\begin{figure}[t!]
 \centering
 \subfloat[]{
 \begin{tikzpicture}[scale=3.1]
 \filldraw[draw=blue, ultra thick, fill =blue!50] (.437,.375) -- (.625,.375) -- (.625,.562) -- (.437,.562) -- cycle;
 \draw[blue, ultra thick] (.437, .437) -- (.625,.437);
 \draw[blue, ultra thick] (.437,.5) -- (.625,.5);
 \draw[blue, ultra thick] (.5, .375) -- (.5,.562);
 \draw[blue, ultra thick] (.562,.375) -- (.562,.562);
 \draw (.312,.375) -- (.312,.75);
 \draw (.437,.375) -- (.437,.75);
 \draw (.562,.375) -- (.562,.75);
 \draw (.25,.437) -- (.625,.437);
 \draw (.25,.562) -- (.625,.562);
   \draw (.25,.687) -- (.625,.687);
 \draw (.125,.25) -- (.125,1);
 \draw (.375,.25) -- (.375,1);
 \draw (.625,.25) -- (.625,1);
 \draw (0,.875) -- (.75,.875);
 \draw (0,.375) -- (.75,.375);
 \draw (0,.625) -- (.75,.625);
 \draw (0,0) -- (1,0) -- (1,1) -- (0,1) -- cycle;
 \draw (0,.25) -- (1,.25);
 \draw (0,.5) -- (1,.5);
 \draw (0,.75) -- (1,.75);
 \draw (.25,0) -- (.25,1);
 \draw (.5,0) -- (.5,1);
 \draw (.75,0) -- (.75,1);
 \end{tikzpicture}}\quad
  \subfloat[]{
 \begin{tikzpicture}[scale=3.1]
 \filldraw[draw=green(ryb), ultra thick, fill =green(ryb)!50] (.375,.25) -- (.75,.25) -- (.75,.625) -- (.375,.625) -- cycle;
 \draw[green(ryb), ultra thick] (.375, .375) -- (.75,.375);
 \draw[green(ryb), ultra thick] (.375,.5) -- (.75,.5);
 \draw[green(ryb), ultra thick] (.5, .25) -- (.5,.625);
 \draw[green(ryb), ultra thick] (.625,.25) -- (.625,.625);
 \draw (.312,.375) -- (.312,.75);
 \draw (.437,.375) -- (.437,.75);
 \draw (.562,.375) -- (.562,.75);
 \draw (.25,.437) -- (.625,.437);
 \draw (.25,.562) -- (.625,.562);
   \draw (.25,.687) -- (.625,.687);
 \draw (.125,.25) -- (.125,1);
 \draw (.375,.25) -- (.375,1);
 \draw (.625,.25) -- (.625,1);
 \draw (0,.875) -- (.75,.875);
 \draw (0,.375) -- (.75,.375);
 \draw (0,.625) -- (.75,.625);
 \draw (0,0) -- (1,0) -- (1,1) -- (0,1) -- cycle;
 \draw (0,.25) -- (1,.25);
 \draw (0,.5) -- (1,.5);
 \draw (0,.75) -- (1,.75);
 \draw (.25,0) -- (.25,1);
 \draw (.5,0) -- (.5,1);
 \draw (.75,0) -- (.75,1);
 \end{tikzpicture}}\quad
  \subfloat[]{
 \begin{tikzpicture}[scale=3.1]
  \filldraw[draw=red, ultra thick, fill =red!50] (.25,0) -- (1,0) -- (1,.75) -- (.25,.75) -- cycle;
 \draw[red, ultra thick] (.5, 0) -- (.5,.75);
 \draw[red, ultra thick] (.75,0) -- (.75,.75);
 \draw[red, ultra thick] (.25, .25) -- (1,.25);
 \draw[red, ultra thick] (.25,.5) -- (1,.5);
 \draw (.312,.375) -- (.312,.75);
 \draw (.437,.375) -- (.437,.75);
 \draw (.562,.375) -- (.562,.75);
 \draw (.25,.437) -- (.625,.437);
 \draw (.25,.562) -- (.625,.562);
   \draw (.25,.687) -- (.625,.687);
 \draw (.125,.25) -- (.125,1);
 \draw (.375,.25) -- (.375,1);
 \draw (.625,.25) -- (.625,1);
 \draw (0,.875) -- (.75,.875);
 \draw (0,.375) -- (.75,.375);
 \draw (0,.625) -- (.75,.625);
 \draw (0,0) -- (1,0) -- (1,1) -- (0,1) -- cycle;
 \draw (0,.25) -- (1,.25);
 \draw (0,.5) -- (1,.5);
 \draw (0,.75) -- (1,.75);
 \draw (.25,0) -- (.25,1);
 \draw (.5,0) -- (.5,1);
 \draw (.75,0) -- (.75,1);
 \end{tikzpicture}}\quad
  \subfloat[]{
 \begin{tikzpicture}[scale=3.1]
   \filldraw[draw=red, ultra thick, fill =red!50] (.25,0) -- (1,0) -- (1,.75) -- (.25,.75) -- cycle;
 \draw[red, ultra thick] (.5, 0) -- (.5,.75);
 \draw[red, ultra thick] (.75,0) -- (.75,.75);
 \draw[red, ultra thick] (.25, .25) -- (1,.25);
 \draw[red, ultra thick] (.25,.5) -- (1,.5);
  \filldraw[draw=green(ryb), ultra thick, fill =green(ryb)!50] (.375,.25) -- (.75,.25) -- (.75,.625) -- (.375,.625) -- cycle;
 \draw[green(ryb), ultra thick] (.375, .375) -- (.75,.375);
 \draw[green(ryb), ultra thick] (.375,.5) -- (.75,.5);
 \draw[green(ryb), ultra thick] (.5, .25) -- (.5,.625);
 \draw[green(ryb), ultra thick] (.625,.25) -- (.625,.625);
   \filldraw[draw=blue, ultra thick, fill =blue!50] (.437,.375) -- (.625,.375) -- (.625,.562) -- (.437,.562) -- cycle;
 \draw[blue, ultra thick] (.437, .437) -- (.625,.437);
 \draw[blue, ultra thick] (.437,.5) -- (.625,.5);
 \draw[blue, ultra thick] (.5, .375) -- (.5,.562);
 \draw[blue, ultra thick] (.562,.375) -- (.562,.562);
 \draw (.312,.375) -- (.312,.75);
 \draw (.437,.375) -- (.437,.75);
 \draw (.562,.375) -- (.562,.75);
 \draw (.25,.437) -- (.625,.437);
 \draw (.25,.562) -- (.625,.562);
   \draw (.25,.687) -- (.625,.687);
 \draw (.125,.25) -- (.125,1);
 \draw (.375,.25) -- (.375,1);
 \draw (.625,.25) -- (.625,1);
 \draw (0,.875) -- (.75,.875);
 \draw (0,.375) -- (.75,.375);
 \draw (0,.625) -- (.75,.625);
 \draw (0,0) -- (1,0) -- (1,1) -- (0,1) -- cycle;
 \draw (0,.25) -- (1,.25);
 \draw (0,.5) -- (1,.5);
 \draw (0,.75) -- (1,.75);
 \draw (.25,0) -- (.25,1);
 \draw (.5,0) -- (.5,1);
 \draw (.75,0) -- (.75,1);
 \end{tikzpicture}}
 \caption{An LR B-spline nested in another LR B-spline, which in turn is nested in another LR B-spline on a structured LR-mesh for bidegree $(2,2)$. Consider again the mesh in Figure~\ref{exSM}(f). In (a)--(c) we depict the supports of three LR B-splines on this mesh. The support in (a) is contained in the interior of the support in (b) and (c), and the support in (b) is contained in the interior of the support in (c). Therefore, the LR B-spline considered in (a) is nested both in the LR B-splines in (b) and (c), and the LR B-spline in (b) is nested in the LR B-spline in (c). Hence, the considered mesh does not have the \NtwoS-property.}\label{exSM2}
 \end{figure}
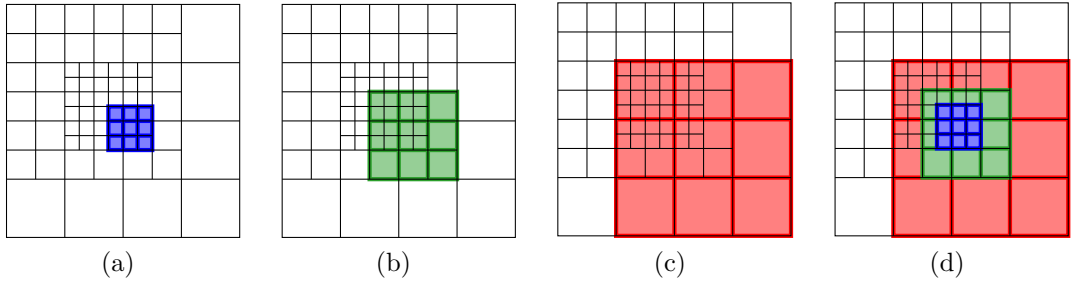
\begin{figure}[t!]
\centering
\subfloat[]{
\begin{tikzpicture}[scale=4.5]
\draw (0,0) -- (1,0) -- (1,1) -- (0,1) -- cycle;
\draw (0,.111) -- (1,.111);
\draw (0,.222) -- (1,.222);
\draw (0,.333) -- (1,.333);
\draw (0,.444) -- (1,.444);
\draw (0,.555) -- (1,.555);
\draw (0,.666) -- (1,.666);
\draw (0,.777) -- (1,.777);
\draw (0,.888) -- (1,.888);
\draw (.111,0) -- (.111,1);
\draw (.222,0) -- (.222,1);
\draw (.333,0) -- (.333,1);
\draw (.444,0) -- (.444,1);
\draw (.555,0) -- (.555,1);
\draw (.666,0) -- (.666,1);
\draw (.777,0) -- (.777,1);
\draw (.888,0) -- (.888,1);
\end{tikzpicture}
}\hspace*{0.2cm}
\subfloat[]{
\begin{tikzpicture}[scale=4.5]
\draw (0,0) -- (1,0) -- (1,1) -- (0,1) -- cycle;
\draw (0,.111) -- (1,.111);
\draw (0,.222) -- (1,.222);
\draw (0,.333) -- (1,.333);
\draw (0,.444) -- (1,.444);
\draw (0,.555) -- (1,.555);
\draw (0,.666) -- (1,.666);
\draw (0,.777) -- (1,.777);
\draw (0,.888) -- (1,.888);
\draw (.111,0) -- (.111,1);
\draw (.222,0) -- (.222,1);
\draw (.333,0) -- (.333,1);
\draw (.444,0) -- (.444,1);
\draw (.555,0) -- (.555,1);
\draw (.666,0) -- (.666,1);
\draw (.777,0) -- (.777,1);
\draw (.888,0) -- (.888,1);
\draw (0,.943) -- (.555,.943);
\draw (0,.833) -- (.555,.833);
\draw (0,.722) -- (.555,.722);
\draw (0,.611) -- (.555,.611);
\draw (0,.5) -- (1,.5);
\draw (.444,.389) -- (1,.389);
\draw (.444,.278) -- (1,.278);
\draw (.444,.167) -- (1,.167);
\draw (.444,.055) -- (1,.055);
\draw (.943,0) -- (.943,.555);
\draw (.833,0) -- (.833,.555);
\draw (.722,0) -- (.722,.555);
\draw (.611,0) -- (.611,.555);
\draw (.5,0) -- (.5,1);
\draw (.389,.444) -- (.389,1);
\draw (.278,.444) -- (.278,1);
\draw (.167,.444) -- (.167,1);
\draw (.055,.444) -- (.055,1);
\end{tikzpicture}
}\hspace*{0.2cm}
\subfloat[]{
\begin{tikzpicture}[scale=4.5]
\filldraw[fill=green(ryb)!50, draw=green(ryb), ultra thick] (.333,.222) -- (.777,.222) -- (.777,.666) -- (.333,.666) -- cycle;
\draw (0,0) -- (1,0) -- (1,1) -- (0,1) -- cycle;
\draw (0,.111) -- (1,.111);
\draw (0,.222) -- (1,.222);
\draw (0,.333) -- (1,.333);
\draw (0,.444) -- (1,.444);
\draw (0,.555) -- (1,.555);
\draw (0,.666) -- (1,.666);
\draw (0,.777) -- (1,.777);
\draw (0,.888) -- (1,.888);
\draw (.111,0) -- (.111,1);
\draw (.222,0) -- (.222,1);
\draw (.333,0) -- (.333,1);
\draw (.444,0) -- (.444,1);
\draw (.555,0) -- (.555,1);
\draw (.666,0) -- (.666,1);
\draw (.777,0) -- (.777,1);
\draw (.888,0) -- (.888,1);
\draw (0,.943) -- (.555,.943);
\draw (0,.833) -- (.555,.833);
\draw (0,.722) -- (.555,.722);
\draw (0,.611) -- (.555,.611);
\draw (0,.5) -- (1,.5);
\draw (.444,.389) -- (1,.389);
\draw (.444,.278) -- (1,.278);
\draw (.444,.167) -- (1,.167);
\draw (.444,.055) -- (1,.055);
\draw (.943,0) -- (.943,.555);
\draw (.833,0) -- (.833,.555);
\draw (.722,0) -- (.722,.555);
\draw (.611,0) -- (.611,.555);
\draw (.5,0) -- (.5,1);
\draw (.389,.444) -- (.389,1);
\draw (.278,.444) -- (.278,1);
\draw (.167,.444) -- (.167,1);
\draw (.055,.444) -- (.055,1);
\draw (.278,.693) -- (.555,.693);
\draw (.278,.638) -- (.555,.638);
\draw (.278,.584) -- (.555,.584);
\draw (.278,.528) -- (.722,.528);
\draw (.278,.473) -- (.722,.473);
\draw (.444,.417) -- (.722,.417);
\draw (.444,.362) -- (.722,.362);
\draw (.444,.306) -- (.722,.306);
\draw (.693,.278) -- (.693,.555);
\draw (.638,.278) -- (.638,.555);
\draw (.584,.278) -- (.584,.555);
\draw (.528,.278) -- (.528,.722);
\draw (.473,.278) -- (.473,.722);
\draw (.417,.444) -- (.417,.722);
\draw (.362,.444) -- (.362,.722);
\draw (.306,.444) -- (.306,.722);
\end{tikzpicture}
}
\caption{A structured LR-mesh with a linear dependence relation among the LR B-splines of bidegree $(4,4)$ defined on the highlighted region in (c). We start by considering an open tensor mesh with interior meshlines of multiplicity 1 as in (a). Then, we apply two iterations of structured mesh refinement as shown in (b)--(c). The LR B-splines with support in the region highlighted in (c) are linearly dependent. In particular, the region corresponds to the support of an LR B-spline that has many nested LR B-splines in it. One can prove the existence of the linear dependence relation by computing the spline space dimension and the number of LR B-splines defined on the mesh as explained in the examples of \cite{LRdependence}. This configuration can be reproduced for any bidegree $(p_1, p_2)$ with $p_k \geq 4$ for $k = 1,2$.}\label{LDstructured}
\end{figure}
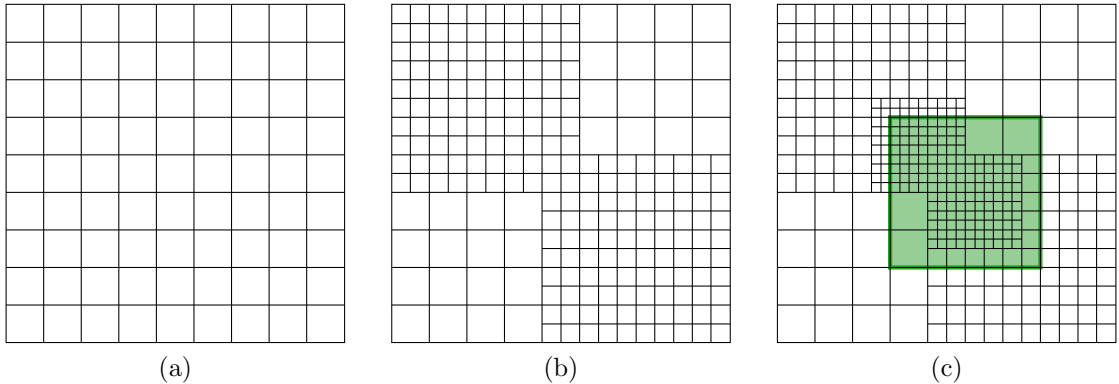

On the other hand, the standard B-splines defined on a plain tensor mesh are locally linearly independent, and the meshes generated by the structured mesh refinement are locally tensor meshes far from the boundary of the region where the structured mesh refinement is applied. The LR B-splines defined in these zones of the mesh behave like the standard B-splines, and therefore are locally linearly independent. On the boundary of the region where the refinement has been applied, LR B-splines with smaller support can be nested in LR B-splines with larger support. 
Hence, in such case the resulting LR-mesh does not have the \NtwoS-property.

The idea for our refinement strategy, which will be called \highlight{\NtwoS-structured mesh refinement}, is therefore to recover the \NtwoS-property in the mesh by slightly modifying it in these transition regions. When an LR B-spline $B[\pmb{x}^2, \pmb{y}^2]$ is nested into another LR B-spline $B[\pmb{x}^1, \pmb{y}^1]$, one could prolong the splits in $\cM(\pmb{x}^2, \pmb{y}^2)$ in some direction to traverse entirely $\supp B[\pmb{x}^1, \pmb{y}^1]$. This, by Definition \ref{defLR}, would refine $B[\pmb{x}^1, \pmb{y}^1]$ in LR B-splines that turn out not to have nested LR B-splines in their supports anymore.
This last statement is formalized in Corollary \ref{lemmaTE}. To this end, we first prove the \NtwoS-property for LR-meshes with a particular structure.
\begin{definition}
An LR-mesh $\cM$ on the domain $\Omega$ is said to be \highlight{tensorized in the $k$-th direction}, for $k \in \{1,2\}$, if all the internal $k$-meshlines in $\cM$ are contained in $k$-splits crossing $\Omega$ entirely, i.e., there are no vertical, if $k=1$, or horizontal, if $k=2$, T-vertices in the interior of $\Omega$.
\end{definition}
\begin{prop}\label{N2Stensorized}
Let $\cM$ be an LR-mesh tensorized in the $k$-th direction for some $k \in \{1,2\}$. Then, the LR B-splines defined on $\cM$ are all non-nested.
\end{prop}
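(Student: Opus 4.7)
The plan is to exploit the tensor structure along the $k$-th direction to reduce the claim to two analogous one-dimensional B-spline arguments. Without loss of generality take $k=1$, so every internal vertical meshline of $\cM$ extends entirely across $\Omega$. Gathering the $x$-coordinates of the vertical meshlines of $\cM$ with their $\cM$-multiplicities (and full multiplicity $p_1+1$ at $\partial\Omega$, since $\cM$ is open) yields a global non-decreasing knot vector $\pmb{\xi}$.

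The first step is to show that the $x$-knot vector of every $B[\pmb{x},\pmb{y}]\in\cB^{\mathcal{LR}}(\cM)$ is a contiguous sub-vector of $\pmb{\xi}$ of length $p_1+2$. Since the LR B-spline set on $\cM$ is independent of the insertion ordering (see \cite[Theorem~3.4]{paperLR}), I may choose an ordering that first inserts all vertical splits -- each of which, by tensorization, can be taken as an $\Omega$-spanning split -- and only afterwards the horizontal ones. On the initial tensor mesh the claim is immediate. An $\Omega$-spanning vertical split acts globally as knot insertion in the $x$-direction and refines contiguous slices of the current global $x$-vector into contiguous slices of the updated one, while a horizontal split does not touch $x$-knot vectors; the claim therefore propagates to $\cM$.

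Suppose now, towards a contradiction, that $B[\pmb{x}^2,\pmb{y}^2]\preceq B[\pmb{x}^1,\pmb{y}^1]$ in $\cB^{\mathcal{LR}}(\cM)$ with $B^1\neq B^2$. The inclusion $\supp B^2\subseteq\supp B^1$ gives $[x_1^2,x_{p_1+2}^2]\subseteq[x_1^1,x_{p_1+2}^1]$. Recall that $\pmb{\xi}$ is strictly increasing in its interior and carries runs of length $p_1+1$ only at its two ends. In the interior this already forces the two contiguous slices of $\pmb{\xi}$ to have identical starting indices. Whenever the slices share a boundary value, the corresponding vertical edge lies in $\partial\supp B^1\cap\partial\supp B^2$ and the multiplicity inequality in Definition~\ref{nesteddef} forbids $\pmb{x}^2$ from reaching further into the boundary run than $\pmb{x}^1$; combined with the interval inclusion this again yields identical starting indices. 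Hence $\pmb{x}^1=\pmb{x}^2=:\pmb{x}$. Collecting the $y$-coordinates of the horizontal meshlines of $\cM$ whose coverings span the strip $[x_1,x_{p_1+2}]$ (with their multiplicities, and full multiplicity $p_2+1$ at $\partial\Omega$) gives a one-dimensional knot vector $\pmb{\eta}$; the reasoning of the first step now applied in the $y$-direction shows that both $\pmb{y}^1$ and $\pmb{y}^2$ are contiguous sub-vectors of $\pmb{\eta}$, and the same nesting argument now in $y$ forces $\pmb{y}^1=\pmb{y}^2$. Hence $B^1=B^2$, a contradiction.

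The main obstacle is the boundary bookkeeping in the middle step: the purely interval-theoretic inclusion does not by itself pin down contiguous sub-vectors of $\pmb{\xi}$ when $\pmb{\xi}$ carries a run of length $p_1+1$ at $\partial\Omega$, and the multiplicity inequality in the definition of nested LR B-splines is exactly the ingredient that rules out the otherwise-possible ``boundary shifts''. With this handled, the reduction to a one-dimensional B-spline problem in the $y$-direction makes the rest routine.
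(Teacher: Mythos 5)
Your proof is correct and follows essentially the same route as the paper's: tensorization forces the two nested LR B-splines to share the $x$-knot vector, after which minimal support and the nestedness multiplicity condition force the $y$-knot vectors to coincide as well, contradicting $B^1 \neq B^2$; you merely make explicit, via the global knot vector $\pmb{\xi}$ and the reordering of split insertions, the step the paper asserts tersely. One caveat: your claim that $\pmb{\xi}$ is strictly increasing in its interior is not granted by the hypotheses, since a general LR-mesh may carry internal meshlines of multiplicity up to $p_1+1$; however, the shared-boundary multiplicity argument you already use for the terminal runs applies verbatim to interior runs (checking both endpoints of the common support), so this gap is cosmetic.
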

\begin{proof}
Without loss of generality, we can assume that $\cM$ is tensorized in the first direction, i.e., the  vertical meshlines are all contained in vertical splits crossing the domain entirely.  This means that in $\cM$ no vertical meshline ends in the interior of the domain and therefore in the interior of the support of any LR B-splines defined on $\cM$. We now proceed by contradiction and assume that there exists an LR B-spline in $\cM$, say $B^2 = B[\pmb{x}^2, \pmb{y}^2]$, nested in another, say $B^1 = B[\pmb{x}^1, \pmb{y}^1]$; see Definition~\ref{nesteddef}. Because of the tensorization in the first direction, this can only happen if they share the same knot vector in the $x$-direction, $\pmb{x}^1= \pmb{x}^2$.
In particular, their supports have the same extreme values in the $x$-direction.
This implies that all the horizontal splits, counting the multiplicities, of $\cM$ traversing $\supp B^2$ must traverse $\supp B^1$ as well.
Since $B^2$ is nested in $B^1$ and $B^1$ has minimal support (it is an LR  B-spline), it follows that $\pmb{y}^1 = \pmb{y}^2$, and as a consequence we have $B^1=B^2$. This is a contradiction and concludes the proof.
%
\end{proof}
\begin{cor}\label{lemmaTE}
Given an LR-mesh $\cM$, let $B = B[\pmb{x},\pmb{y}]$ and $B^1 = B[\pmb{x}^1, \pmb{y}^1],$ $\ldots,$ $B^n = B[\pmb{x}^n, \pmb{y}^n]$ be LR B-splines defined on $\cM$ such that $ B^1, \ldots,  B^n \preceq B$. Let $\cN$ be the mesh defined by the restriction of $\cM$ to the meshlines of $\cM(\pmb{x}, \pmb{y}), \cM(\pmb{x}^1, \pmb{y}^1), \ldots, \cM(\pmb{x}^n, \pmb{y}^n)$.
Then,\begin{enumerate}
\item there are at least one horizontal T-vertex and one vertical T-vertex of $\cN$ in the interior of $\supp B$;
\item by extending all the splits of $\cN$ in some direction to cross $\supp B$ entirely, $B$ is refined, by Definition \ref{defLR}, in LR B-splines that do not have any nested LR B-splines anymore.\label{lemmaTE:b}
\end{enumerate}
\end{cor}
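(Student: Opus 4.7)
The plan is to reduce both assertions to Proposition \ref{N2Stensorized} by viewing $\cN$ (and a tensorized extension of it) as an LR-mesh on $\supp B$ and noting that $B, B^1, \ldots, B^n$ are LR B-splines there. Since each $B^j \preceq B$, all the tensor meshes $\cM(\pmb{x}^j,\pmb{y}^j)$ sit inside $\supp B$, so $\cN$ is entirely contained in $\supp B$.

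For part 1, I would argue by contradiction. Suppose no vertical T-vertex of $\cN$ lies in the interior of $\supp B$. Then no vertical meshline of $\cN$ terminates in the interior of $\supp B$, so every interior vertical meshline of $\cN$ traverses $\supp B$ entirely. Thus, regarded as a mesh on $\supp B$, $\cN$ is tensorized in the first direction. Recognising $\cN$ as an LR-mesh on $\supp B$ whose LR B-splines include $B, B^1, \ldots, B^n$ (which I would justify by constructing $\cN$ from the tensor mesh $\cM(\pmb{x},\pmb{y})$ through a suitable sequence of insertions of the splits composing the $\cM(\pmb{x}^j,\pmb{y}^j)$ and invoking the order-independence mentioned after Definition \ref{defLR}), Proposition \ref{N2Stensorized} would forbid $B^1 \preceq B$, a contradiction. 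The entirely symmetric argument, with the roles of the two directions swapped, delivers the horizontal T-vertex.

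For part 2, let $\cN'$ be the mesh obtained from $\cN$ by extending each split in its own direction to cross $\supp B$ entirely, and let $\cM'$ be the corresponding extension of $\cM$. Every interior meshline of $\cN'$ now spans $\supp B$ completely, so $\cN'$ is tensorized in both directions; it is in fact a tensor mesh on $\supp B$. By Definition \ref{defLR}, the added splits refine $B$ into LR B-splines $\tilde B$ whose supports are contained in $\supp B$. Any LR B-spline of $\cM'$ that were nested in some $\tilde B$ would also have support inside $\supp B$ and could therefore be identified with an LR B-spline on $\cN'$. Since $\cN'$ is tensorized in both directions, Proposition \ref{N2Stensorized} rules out such nesting, proving that the refined LR B-splines inherit no nested LR B-splines.

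The main technical obstacle I anticipate is justifying that $\cN$ and $\cN'$ are bona fide LR-meshes on $\supp B$ and that the LR B-splines of $\cM$ (respectively $\cM'$) whose supports lie in $\supp B$ coincide with the LR B-splines of these submeshes. I would dispatch this by exhibiting an explicit sequence of split insertions that builds $\cN$ (respectively $\cN'$) from $\cM(\pmb{x},\pmb{y})$, and then invoking the order-independence of the LR B-spline generation algorithm in Definition \ref{defLR} to match the outputs.
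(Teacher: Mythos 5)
Your part~1 is essentially the paper's own argument: assume no vertical (resp.\ horizontal) T-vertex of $\cN$ lies in the interior of $\supp B$, conclude that $\cN$ is tensorized in the first (resp.\ second) direction, and contradict $B^1\preceq B$ via Proposition~\ref{N2Stensorized}. The concern you flag about $\cN$ being a genuine LR-mesh on $\supp B$ carrying $B,B^1,\ldots,B^n$ as minimal-support B-splines is legitimate, but the paper glosses over it in exactly the same way, so I do not count it against you.

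In part~2, however, you extend \emph{every} split of $\cN$ along its own axis, producing a mesh $\cN'$ that is a full tensor mesh on $\supp B$. That is the ``full tensor expansion'' which the paper explicitly distinguishes from the operation actually defined by item~\ref{lemmaTE:b}: there, ``extending all the splits of $\cN$ in some direction'' means fixing one direction $k$ and prolonging only the $k$-splits, leaving the $(3-k)$-splits untouched (this is what is later named the one-directional tensor expansion and what Algorithm~\ref{alg:N2S-mesh} performs, alternating $k$ with the parity of $i$). Your argument therefore establishes non-nestedness on a strictly finer mesh than the one the corollary constructs, and non-nestedness does not transfer from a finer mesh to a coarser one, so as written the claim about the one-directional expansion is not proved. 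The repair is immediate and recovers the paper's proof verbatim: extend only the vertical (say) splits of $\cN$ across $\supp B$; the resulting mesh $\widetilde{\cN}$ is tensorized in the first direction alone, which is all that Proposition~\ref{N2Stensorized} requires, and Definition~\ref{defLR} forces the corresponding refinement of $B$ into pieces with no nested LR B-splines. Aside from this misreading, your route and the paper's are the same, both resting entirely on Proposition~\ref{N2Stensorized}.
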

\begin{proof}
\begin{enumerate}
\item Assume that there are no vertical T-vertices of $\cN$ in the interior of $\supp B$. Then, $\cN$ would be tensorized in the first direction. By Proposition~\ref{N2Stensorized}, it would imply that all the LR B-splines defined on $\cN$ are non-nested, which is a contradiction. Analogously, one can prove that at least one horizontal T-vertex of $\cN$ must be in the interior of $\supp B$.
\item Since $B$ contains the support of other B-splines, $\cN \neq \cM(\pmb{x}, \pmb{y})$ and in particular there exist at least one horizontal T-vertex and one vertical T-vertex by the previous item. We now focus on the vertical T-vertices, but of course the same argument can also be carried out for the horizontal T-vertices. We extend all the vertical splits in $\cN$ to cross $\supp B$ entirely, and denote this new mesh as $\widetilde{\cN}$. By Definition \ref{defLR}, the extensions trigger a refinement of $B$ via knot insertions. $\widetilde{\cN}$  is tensorized in the first direction and, by Proposition~\ref{N2Stensorized}, no LR B-spline defined on $\widetilde{\cN}$ is nested into another.%
\end{enumerate}
\end{proof}
The extension of the splits considered in item~\ref{lemmaTE:b} of Corollary~\ref{lemmaTE} will be called a \highlight{\unilateral{} tensor expansion} of $B^1, \ldots, B^n$ in $B$. An example is illustrated in Figure~\ref{exTensorEx}.~
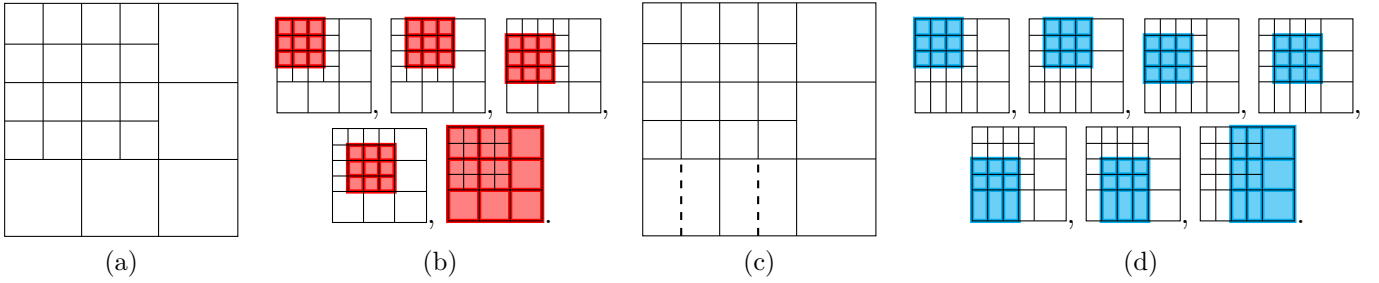
\begin{figure}[t!]\hspace*{-0.8cm}
\subfloat[]{
\begin{tikzpicture}[scale=3.1]
\draw (0,0) -- (1,0) -- (1,1) -- (0,1) -- cycle;
\draw (.33,0) -- (.33, 1);
\draw (.66,0) -- (.66,1);
\draw (0,.66) -- (1,.66);
\draw (0,.33) -- (1,.33);
\draw (0,.495) -- (.66,.495);
\draw (0,.825) -- (.66,.825);
\draw (.495,.33) -- (.495,1);
\draw (.165,.33) -- (.165,1);
\end{tikzpicture}
}
\subfloat[]{
\begin{minipage}{.2\textwidth}\vspace{-2.75cm}
\begin{tabular}{c}
\begin{tikzpicture}[scale=1.25]
\filldraw[draw=red, ultra thick, fill=red!50] (0,.495) -- (.495,.495) -- (.495,1) -- (0,1) -- cycle;
\draw[red, ultra thick] (.165,.495) -- (.165,1);
\draw[red, ultra thick] (.33,.495) -- (.33,1);
\draw[red, ultra thick] (0,.66) -- (.495,.66);
\draw[red, ultra thick] (0,.825) -- (.495,.825);
\draw (0,0) -- (1,0) -- (1,1) -- (0,1) -- cycle;
\draw (.33,0) -- (.33, 1);
\draw (.66,0) -- (.66,1);
\draw (0,.66) -- (1,.66);
\draw (0,.33) -- (1,.33);
\draw (0,.495) -- (.66,.495);
\draw (0,.825) -- (.66,.825);
\draw (.495,.33) -- (.495,1);
\draw (.165,.33) -- (.165,1);
\end{tikzpicture},
\begin{tikzpicture}[scale=1.25]
\filldraw[draw=red, ultra thick, fill=red!50] (.165,.495) -- (.66,.495) -- (.66,1) -- (.165,1) -- cycle;
\draw[red, ultra thick] (.495,.495) -- (.495,1);
\draw[red, ultra thick] (.33,.495) -- (.33,1);
\draw[red, ultra thick] (.165,.66) -- (.66,.66);
\draw[red, ultra thick] (.165,.825) -- (.66,.825);
\draw (0,0) -- (1,0) -- (1,1) -- (0,1) -- cycle;
\draw (.33,0) -- (.33, 1);
\draw (.66,0) -- (.66,1);
\draw (0,.66) -- (1,.66);
\draw (0,.33) -- (1,.33);
\draw (0,.495) -- (.66,.495);
\draw (0,.825) -- (.66,.825);
\draw (.495,.33) -- (.495,1);
\draw (.165,.33) -- (.165,1);
\end{tikzpicture},
\begin{tikzpicture}[scale=1.25]
\filldraw[draw=red, ultra thick, fill=red!50] (0,.33) -- (.495,.33) -- (.495,.825) -- (0,.825) -- cycle;
\draw[red, ultra thick] (.165,.33) -- (.165,.825);
\draw[red, ultra thick] (.33,.33) -- (.33,.825);
\draw[red, ultra thick] (0,.66) -- (.495,.66);
\draw[red, ultra thick] (0,.495) -- (.495,.495);
\draw (0,0) -- (1,0) -- (1,1) -- (0,1) -- cycle;
\draw (.33,0) -- (.33, 1);
\draw (.66,0) -- (.66,1);
\draw (0,.66) -- (1,.66);
\draw (0,.33) -- (1,.33);
\draw (0,.495) -- (.66,.495);
\draw (0,.825) -- (.66,.825);
\draw (.495,.33) -- (.495,1);
\draw (.165,.33) -- (.165,1);
\end{tikzpicture},\\
\begin{tikzpicture}[scale=1.25]
\filldraw[draw=red, ultra thick, fill=red!50] (.165,.33) -- (.66,.33) -- (.66,.825) -- (.165,.825) -- cycle;
\draw[red, ultra thick] (.495,.33) -- (.495,.825);
\draw[red, ultra thick] (.33,.33) -- (.33,.825);
\draw[red, ultra thick] (.165,.66) -- (.66,.66);
\draw[red, ultra thick] (.165,.495) -- (.66,.495);
\draw (0,0) -- (1,0) -- (1,1) -- (0,1) -- cycle;
\draw (.33,0) -- (.33, 1);
\draw (.66,0) -- (.66,1);
\draw (0,.66) -- (1,.66);
\draw (0,.33) -- (1,.33);
\draw (0,.495) -- (.66,.495);
\draw (0,.825) -- (.66,.825);
\draw (.495,.33) -- (.495,1);
\draw (.165,.33) -- (.165,1);
\end{tikzpicture},
\begin{tikzpicture}[scale=1.25]
\filldraw[draw=red, ultra thick, fill=red!50] (0,0) -- (1,0) -- (1,1) -- (0,1) -- cycle;
\draw[red, ultra thick] (.66,0) -- (.66,1);
\draw[red, ultra thick] (.33,0) -- (.33,1);
\draw[red, ultra thick] (0,.33) -- (1,.33);
\draw[red, ultra thick] (0,.66) -- (1,.66);
\draw (0,0) -- (1,0) -- (1,1) -- (0,1) -- cycle;
\draw (.33,0) -- (.33, 1);
\draw (.66,0) -- (.66,1);
\draw (0,.66) -- (1,.66);
\draw (0,.33) -- (1,.33);
\draw (0,.495) -- (.66,.495);
\draw (0,.825) -- (.66,.825);
\draw (.495,.33) -- (.495,1);
\draw (.165,.33) -- (.165,1);
\end{tikzpicture}.
\end{tabular}
\end{minipage}\hspace{1.5cm}
}
\subfloat[]{
\begin{tikzpicture}[scale=3.1]
\draw (0,0) -- (1,0) -- (1,1) -- (0,1) -- cycle;
\draw (.33,0) -- (.33, 1);
\draw (.66,0) -- (.66,1);
\draw (0,.66) -- (1,.66);
\draw (0,.33) -- (1,.33);
\draw (0,.495) -- (.66,.495);
\draw (0,.825) -- (.66,.825);
\draw (.495,.33) -- (.495,1);
\draw (.165,.33) -- (.165,1);
\draw[thick, dashed] (.165,0) -- (.165,.33);
\draw[thick, dashed] (.495,0) -- (.495,.33);
\end{tikzpicture}
}
\subfloat[]{
\begin{minipage}{.3\textwidth}\vspace{-2.75cm}
\begin{tabular}{c}
\begin{tikzpicture}[scale=1.25]
\filldraw[draw=cyan, ultra thick, fill=cyan!50] (0,.495) -- (.495,.495) -- (.495,1) -- (0,1) -- cycle;
\draw[cyan, ultra thick] (.165,.495) -- (.165,1);
\draw[cyan, ultra thick] (.33,.495) -- (.33,1);
\draw[cyan, ultra thick] (0,.66) -- (.495,.66);
\draw[cyan, ultra thick] (0,.825) -- (.495,.825);
\draw (0,0) -- (1,0) -- (1,1) -- (0,1) -- cycle;
\draw (.33,0) -- (.33, 1);
\draw (.66,0) -- (.66,1);
\draw (0,.66) -- (1,.66);
\draw (0,.33) -- (1,.33);
\draw (0,.495) -- (.66,.495);
\draw (0,.825) -- (.66,.825);
\draw (.495,.33) -- (.495,1);
\draw (.165,.33) -- (.165,1);
\draw (.165,0) -- (.165,.33);
\draw (.495,0) -- (.495,.33);
\end{tikzpicture},
\begin{tikzpicture}[scale=1.25]
\filldraw[draw=cyan, ultra thick, fill=cyan!50] (.165,.495) -- (.66,.495) -- (.66,1) -- (.165,1) -- cycle;
\draw[cyan, ultra thick] (.495,.495) -- (.495,1);
\draw[cyan, ultra thick] (.33,.495) -- (.33,1);
\draw[cyan, ultra thick] (.165,.66) -- (.66,.66);
\draw[cyan, ultra thick] (.165,.825) -- (.66,.825);
\draw (0,0) -- (1,0) -- (1,1) -- (0,1) -- cycle;
\draw (.33,0) -- (.33, 1);
\draw (.66,0) -- (.66,1);
\draw (0,.66) -- (1,.66);
\draw (0,.33) -- (1,.33);
\draw (0,.495) -- (.66,.495);
\draw (0,.825) -- (.66,.825);
\draw (.495,.33) -- (.495,1);
\draw (.165,.33) -- (.165,1);
\draw (.165,0) -- (.165,.33);
\draw (.495,0) -- (.495,.33);
\end{tikzpicture},
\begin{tikzpicture}[scale=1.25]
\filldraw[draw=cyan, ultra thick, fill=cyan!50] (0,.33) -- (.495,.33) -- (.495,.825) -- (0,.825) -- cycle;
\draw[cyan, ultra thick] (.165,.33) -- (.165,.825);
\draw[cyan, ultra thick] (.33,.33) -- (.33,.825);
\draw[cyan, ultra thick] (0,.66) -- (.495,.66);
\draw[cyan, ultra thick] (0,.495) -- (.495,.495);
\draw (0,0) -- (1,0) -- (1,1) -- (0,1) -- cycle;
\draw (.33,0) -- (.33, 1);
\draw (.66,0) -- (.66,1);
\draw (0,.66) -- (1,.66);
\draw (0,.33) -- (1,.33);
\draw (0,.495) -- (.66,.495);
\draw (0,.825) -- (.66,.825);
\draw (.495,.33) -- (.495,1);
\draw (.165,.33) -- (.165,1);
\draw (.165,0) -- (.165,.33);
\draw (.495,0) -- (.495,.33);
\end{tikzpicture},
\begin{tikzpicture}[scale=1.25]
\filldraw[draw=cyan, ultra thick, fill=cyan!50] (.165,.33) -- (.66,.33) -- (.66,.825) -- (.165,.825) -- cycle;
\draw[cyan, ultra thick] (.495,.33) -- (.495,.825);
\draw[cyan, ultra thick] (.33,.33) -- (.33,.825);
\draw[cyan, ultra thick] (.165,.66) -- (.66,.66);
\draw[cyan, ultra thick] (.165,.495) -- (.66,.495);
\draw (0,0) -- (1,0) -- (1,1) -- (0,1) -- cycle;
\draw (.33,0) -- (.33, 1);
\draw (.66,0) -- (.66,1);
\draw (0,.66) -- (1,.66);
\draw (0,.33) -- (1,.33);
\draw (0,.495) -- (.66,.495);
\draw (0,.825) -- (.66,.825);
\draw (.495,.33) -- (.495,1);
\draw (.165,.33) -- (.165,1);
\draw (.165,0) -- (.165,.33);
\draw (.495,0) -- (.495,.33);
\end{tikzpicture},\\
\begin{tikzpicture}[scale=1.25]
\filldraw[draw=cyan, ultra thick, fill=cyan!50] (0,0) -- (.495,0) -- (.495,.66) -- (0,.66) -- cycle;
\draw[cyan, ultra thick] (.165,0) -- (.165,.66);
\draw[cyan, ultra thick] (.33,0) -- (.33,.66);
\draw[cyan, ultra thick] (0,.33) -- (.495,.33);
\draw[cyan, ultra thick] (0,.495) -- (.495,.495);
\draw (0,0) -- (1,0) -- (1,1) -- (0,1) -- cycle;
\draw (.33,0) -- (.33, 1);
\draw (.66,0) -- (.66,1);
\draw (0,.66) -- (1,.66);
\draw (0,.33) -- (1,.33);
\draw (0,.495) -- (.66,.495);
\draw (0,.825) -- (.66,.825);
\draw (.495,.33) -- (.495,1);
\draw (.165,.33) -- (.165,1);
\draw (.165,0) -- (.165,.33);
\draw (.495,0) -- (.495,.33);
\end{tikzpicture},
\begin{tikzpicture}[scale=1.25]
\filldraw[draw=cyan, ultra thick, fill=cyan!50] (.165,0) -- (.66,0) -- (.66,.66) -- (.165,.66) -- cycle;
\draw[cyan, ultra thick] (.495,0) -- (.495,.66);
\draw[cyan, ultra thick] (.33,0) -- (.33,.66);
\draw[cyan, ultra thick] (.165,.33) -- (.66,.33);
\draw[cyan, ultra thick] (.165,.495) -- (.66,.495);
\draw (0,0) -- (1,0) -- (1,1) -- (0,1) -- cycle;
\draw (.33,0) -- (.33, 1);
\draw (.66,0) -- (.66,1);
\draw (0,.66) -- (1,.66);
\draw (0,.33) -- (1,.33);
\draw (0,.495) -- (.66,.495);
\draw (0,.825) -- (.66,.825);
\draw (.495,.33) -- (.495,1);
\draw (.165,.33) -- (.165,1);
\draw (.165,0) -- (.165,.33);
\draw (.495,0) -- (.495,.33);
\end{tikzpicture},
\begin{tikzpicture}[scale=1.25]
\filldraw[draw=cyan, ultra thick, fill=cyan!50] (.33,0) -- (1,0) -- (1,1) -- (.33,1) -- cycle;
\draw[cyan, ultra thick] (.495,0) -- (.495,1);
\draw[cyan, ultra thick] (.66,0) -- (.66,1);
\draw[cyan, ultra thick] (.33,.66) -- (1,.66);
\draw[cyan, ultra thick] (.33,.33) -- (1,.33);
\draw (0,0) -- (1,0) -- (1,1) -- (0,1) -- cycle;
\draw (.33,0) -- (.33, 1);
\draw (.66,0) -- (.66,1);
\draw (0,.66) -- (1,.66);
\draw (0,.33) -- (1,.33);
\draw (0,.495) -- (.66,.495);
\draw (0,.825) -- (.66,.825);
\draw (.495,.33) -- (.495,1);
\draw (.165,.33) -- (.165,1);
\draw (.165,0) -- (.165,.33);
\draw (.495,0) -- (.495,.33);
\end{tikzpicture}.
\end{tabular}
\end{minipage}\hspace{1.5cm}
}
\caption{Example of a vertical tensor expansion. We consider five LR B-splines of bidegree $(2,2)$, namely $B$ and $B^1, \ldots, B^4$, with $\supp B^1,\ldots, \supp B^4$ contained in the upper left corner of $\supp B$. The mesh $\cN$, of multiplicity 1, generated by the meshlines of $B, B^1, \ldots, B^4$ is depicted in (a) and the supports of the LR B-splines are shown in (b). In (c) we perform a  vertical tensor expansion of $B^1, \ldots, B^4$ in $B$. In (d) the supports of the new set of LR B-splines are shown: none of them has a nested LR B-spline anymore.
}\label{exTensorEx}
\end{figure}

The \NtwoS-structured mesh refinement is defined algorithmically as follows. We start from a structured mesh refinement to obtain a new set of LR B-splines. We then collect in a set $\mathcal{B}$ all those LR B-splines that have nested LR B-splines in their supports. If $\mathcal{B}$ is non-empty, we select an LR B-spline $B$ in $\mathcal{B}$ and we apply a \unilateral{} tensor expansion to it. This triggers a refinement of the LR B-spline set, and therefore it changes also the set $\mathcal{B}$. We repeat this procedure till $\mathcal{B}$ becomes empty. In Theorem~\ref{N2Sproperty} we shall prove that this always happens in a finite number of steps.
This procedure is sketched in Algorithm~\ref{alg:N2S-mesh}.
The \unilateral{} tensor expansions are performed by alternating the direction for $i$ even and odd, respectively, in order to bound the thinning of the box-partition elements in a specific direction and preserve the uniformity of the mesh as much as possible.
The LR-mesh obtained in this way will be called an \highlight{\NtwoS-structured LR-mesh}, or in short \highlight{\NtwoStwo{} LR-mesh}.

\begin{algorithm}[b]
\setlength{\algoheightrule}{0pt}
\setlength{\algotitleheightrule}{0pt}
$\mathcal{B}_1$ is the B-spline set on the open tensor mesh equal to the domain's boundary\;
\For{$i=1,2,\ldots$}{
perform a structured mesh refinement of $\mathcal{B}_i$\;
initialize $\mathcal{B}_{i+1}$ as the LR B-spline set defined on the new LR-mesh\; \label{alg:step3}
define $\mathcal{B} = \{B \in \mathcal{B}_{i+1} : \exists\,B' \in \mathcal{B}_{i+1}\mbox{ with }B' \preceq B\}$\;\label{alg:step4}
\While{$\mathcal{B} \neq \emptyset$}{
select $B \in \mathcal{B}$\;
perform a \unilateral{} tensor expansion of the LR B-splines nested in $B$\;
update $\mathcal{B}_{i+1}$ as the LR B-spline set defined on the new LR-mesh\;
update $\mathcal{B} = \{B \in \mathcal{B}_{i+1} : \exists\,B' \in \mathcal{B}_{i+1}\mbox{ with }B' \preceq B\}$\;
}}
\caption{\NtwoS-structured mesh refinement.}\label{alg:N2S-mesh}
\end{algorithm}
\begin{thm}\label{N2Sproperty}
Given an axis-aligned rectangular domain $\Omega \subseteq \mathbb{R}^2$, let $\mathcal{B}_1$ be the set of standard bivariate B-splines defined on the open tensor mesh whose meshlines are the edges of $\partial \Omega$. Then,
\begin{enumerate}
\item the LR B-spline sets $\mathcal{B}_i$ provided by Algorithm~\ref{alg:N2S-mesh} are well defined, i.e., the set $\mathcal{B}$ of the algorithm becomes empty in a finite number of iterations, for every index $i \geq 2$;
\item all the LR B-splines in $\mathcal{B}_i$ are non-nested, for every $i \geq 1$.
\end{enumerate}
\end{thm}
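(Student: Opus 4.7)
My plan is to induct on $i$, with the base case handled by Proposition~\ref{N2Stensorized} and the inductive step reducing to termination of the inner while loop. For $i=1$ the mesh consists only of the edges of $\partial\Omega$ and is trivially tensorized in both directions, so the proposition yields the \NtwoS-property of $\mathcal{B}_1$ immediately. Assuming the \NtwoS-property for $\mathcal{B}_i$, I would address parts~1 and~2 for $\mathcal{B}_{i+1}$ together: part~2 is an immediate consequence of the exit condition $\mathcal{B}=\emptyset$ of the while loop, so the whole inductive step reduces to showing that this loop terminates in finitely many steps.

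For termination I would freeze the mesh right after step~\ref{alg:step3} and record the finite sets $X_0$ and $Y_0$ consisting of the $x$- and $y$-coordinates at which some meshline of this frozen mesh lies. The key invariant I need is that, throughout the subsequent execution of the while loop, every meshline sits on a line with coordinate in $X_0\cup Y_0$ and every vertex of the mesh belongs to $X_0\times Y_0$. This follows because a \unilateral{} tensor expansion only prolongs existing splits: the coordinates of those splits are already in $X_0\cup Y_0$, and the prolongations end at the boundary of $\supp B$, whose corners are already vertices of the mesh. Any further knot insertions forced by Definition~\ref{defLR}, when a prolonged segment crosses LR B-splines lying partly outside $\supp B$, occur at coordinates already labelling meshlines traversed by the new splits, hence add no new coordinate. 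Consequently, only finitely many meshline segments can ever appear.

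Combined with item~1 of Corollary~\ref{lemmaTE}, which guarantees a T-vertex of each orientation in the interior of $\supp B$ for any $B\in\mathcal{B}$, this forces every iteration of the while loop to introduce at least one previously absent meshline segment; together with the finiteness of the meshline pool this yields termination. Part~2 then falls out of the exit condition, and item~\ref{lemmaTE:b} of Corollary~\ref{lemmaTE} explains \emph{why} the loop makes progress toward the \NtwoS-property, since each expansion destroys all nested pairs sitting inside the refined region. The main obstacle I expect is precisely a clean bookkeeping of the cascade of knot insertions triggered by Definition~\ref{defLR} when a prolonged split crosses B-splines straddling $\partial\supp B$: one must rule out the creation of new split coordinates in this cascade, which is what makes the finite-universe counting argument work. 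Once this technicality is under control, the combinatorial termination argument closes the proof.
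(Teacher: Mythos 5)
Your proposal is correct and has the same overall architecture as the paper's proof: induction on $i$, reduction of both claims to termination of the inner while loop (part~2 being immediate from the exit condition), and termination via the key observation that a \unilateral{} tensor expansion only prolongs splits already present on the mesh produced by the structured refinement at the start of the outer iteration. Where you genuinely differ is in the accounting device for termination. The paper fixes the expansion direction $k$ for the whole outer iteration, bounds every split extension from below by $2^{-(i+1)}$ (the minimal element width in the orthogonal direction after the structured refinement), and handles the worst case --- all $k$-splits reaching $\partial\Omega$ --- by a second appeal to Proposition~\ref{N2Stensorized}. You instead freeze the finite coordinate sets $X_0,Y_0$ and argue that the mesh grows monotonically inside a finite lattice of candidate segments. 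Your version needs neither the alternating-direction convention nor Proposition~\ref{N2Stensorized} in the inductive step (you only use it for the base case, where the paper instead cites Theorem~\ref{TeoBressan}), and it would survive a non-dyadic structured refinement; the paper's version makes the geometric mechanism ($2^{-(i+1)}$-quantized growth toward the boundary) more explicit. The two arguments are of essentially equal strength.

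The one step you should not leave as a bare assertion is that every pass through the while loop strictly enlarges the mesh. An interior T-vertex of $\cN$ does not by itself preclude the possibility that the full line through it already crosses $\supp B$ in $\cM$ with the relevant multiplicity, in which case the ``extension'' would add nothing and the loop would stall on the same $B$. This is ruled out by minimal support: if the $(k,a)$-line crossed $\supp B$ entirely in $\cM$, then $a$ would have to be a knot of $B$ with the corresponding multiplicity, so the full line would lie in $\cM(\pmb{x},\pmb{y})\subseteq\cN$ and the interior T-vertex of $\cN$ on that line could not exist. With this observation (which the paper's proof also leaves implicit), your finite-universe argument closes.
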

\begin{proof}
Without loss of generality, we can assume that $\Omega = [0,1]\times [0,1]$. We proceed by induction on the index of the B-spline set. For $i=1$, $\mathcal{B}_1$ is the set of standard B-splines on the open tensor mesh equal to the domain's boundary and we know they are locally linearly independent. By Theorem~\ref{TeoBressan} this is equivalent to be all non-nested. Assume now that $\mathcal{B}_i$ for some $i\geq1$ is well defined and that the functions in it are all non-nested. Let us then prove that also $\mathcal{B}_{i+1}$ is well defined and there is no LR B-spline nested into another LR B-spline of it. At every loop iteration in the algorithm, the LR B-splines that have a nested LR B-spline in their support are collected in the set $\mathcal{B}$. Therefore, whenever we can show that $\mathcal{B}$ becomes empty after a certain iteration of the loop, we can immediately conclude both statements in the theorem.

By Corollary~\ref{lemmaTE}, all the \unilateral{} tensor expansions performed to define the set $\mathcal{B}_{i+1}$ can be done in the same direction $k \in \{1,2\}$, which is therefore fixed once and for all by the index $i+1$.
The length of the LR B-spline supports in the $(3-k)$-th direction at any iteration of the loop cannot become shorter than $2^{-(i+1)}$ regardless of the number of \unilateral{} tensor expansions applied until then. This is because the $(3-k)$-splits on the LR-meshes defined in the loop are fixed by the structured mesh refinement performed on $\mathcal{B}_i$ at the beginning of the process and the minimal length of the box-partition elements in the $(3-k)$-th direction is $2^{-(i+1)}$. Therefore, the split extensions applied when performing a \unilateral{} tensor expansion in the $k$-th direction have lengths bounded from below by $2^{-(i+1)}$ in all the steps of the loop. This means that in a finite number of \unilateral{} tensor expansions a $k$-split could be extended up to the domain's boundary, if needed, to remove nestedness issues, as these extensions cannot become arbitrarily small.
In the worst case scenario, we must extend all the $k$-splits to cross entirely the domain. However, in this case, the resulting LR-mesh would be tensorized in the $k$-th direction. By Proposition~\ref{N2Stensorized}, there are only non-nested LR B-splines on this LR-mesh and thus $\mathcal{B}$ becomes empty in a finite number of loop iterations.
\end{proof}

In practice, the loop related to $\mathcal{B}$ stops quickly and the \NtwoStwo{} LR-meshes are far from being entirely tensorized in one direction. In Figure~\ref{diag} we depict (a) the structured LR-mesh, (b) the corresponding \NtwoStwo{} LR-mesh, and (c) the LR-mesh proposed in \cite{meshbressan}, obtained by performing 7 iterations of diagonal refinement in $[0,1]^2$, using bidegree $(2,2)$. For ease of comparison, we also indicate the number of LR B-splines defined on each of these meshes. We recall that the LR B-splines are not locally linearly independent on the structured LR-mesh, whereas they are on the \NtwoStwo{} LR-mesh and the LR-mesh proposed in \cite{meshbressan}.
\begin{figure}[t!]
\centering
\subfloat[1430 LR B-splines]{
\includegraphics[width=5.25cm, trim={12.5cm 2cm 11.5cm 1cm},clip]{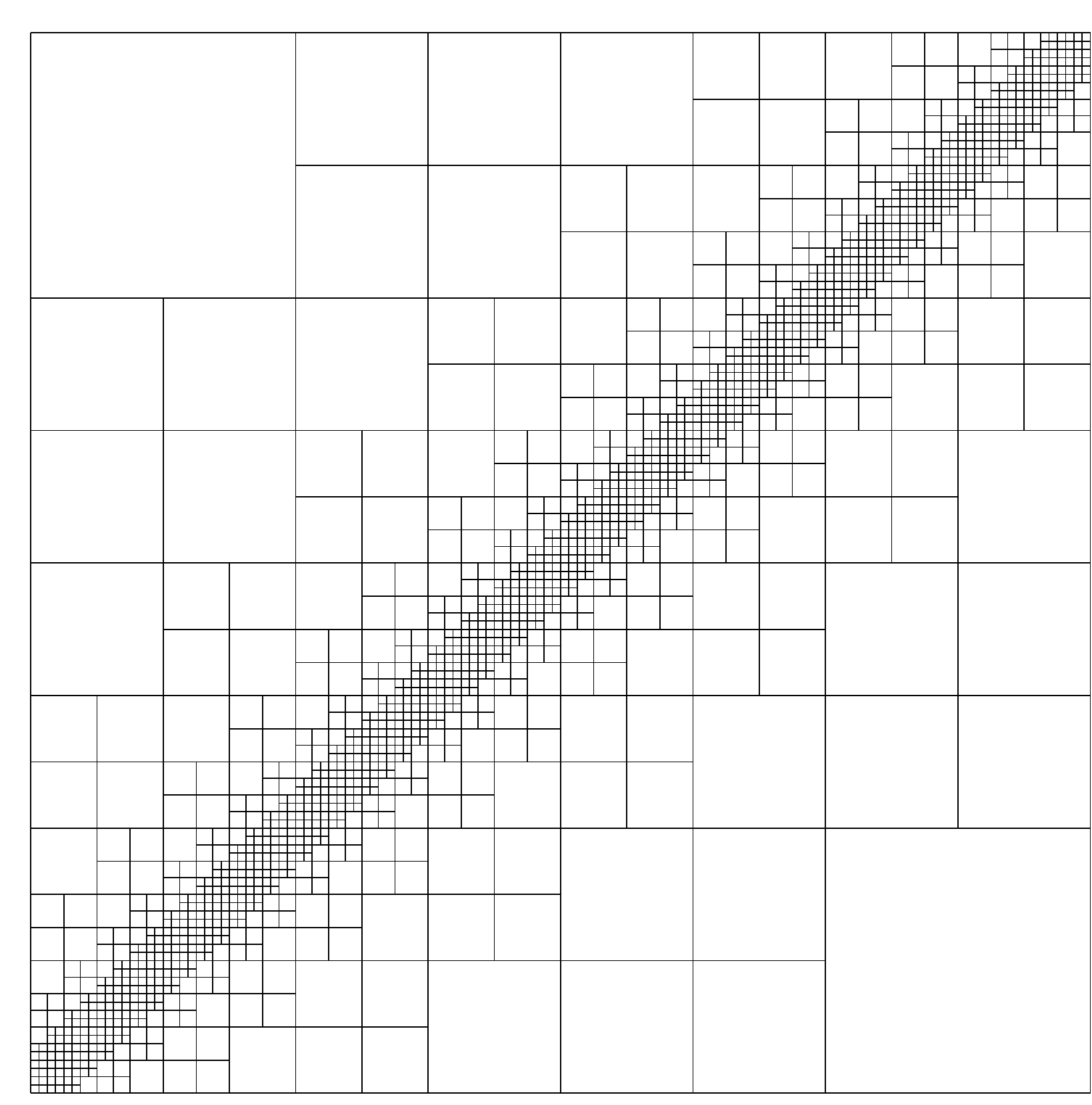}
}
\subfloat[1894 LR B-splines]{
\includegraphics[width=5.25cm, trim={12.5cm 2cm 11.5cm 1cm},clip]{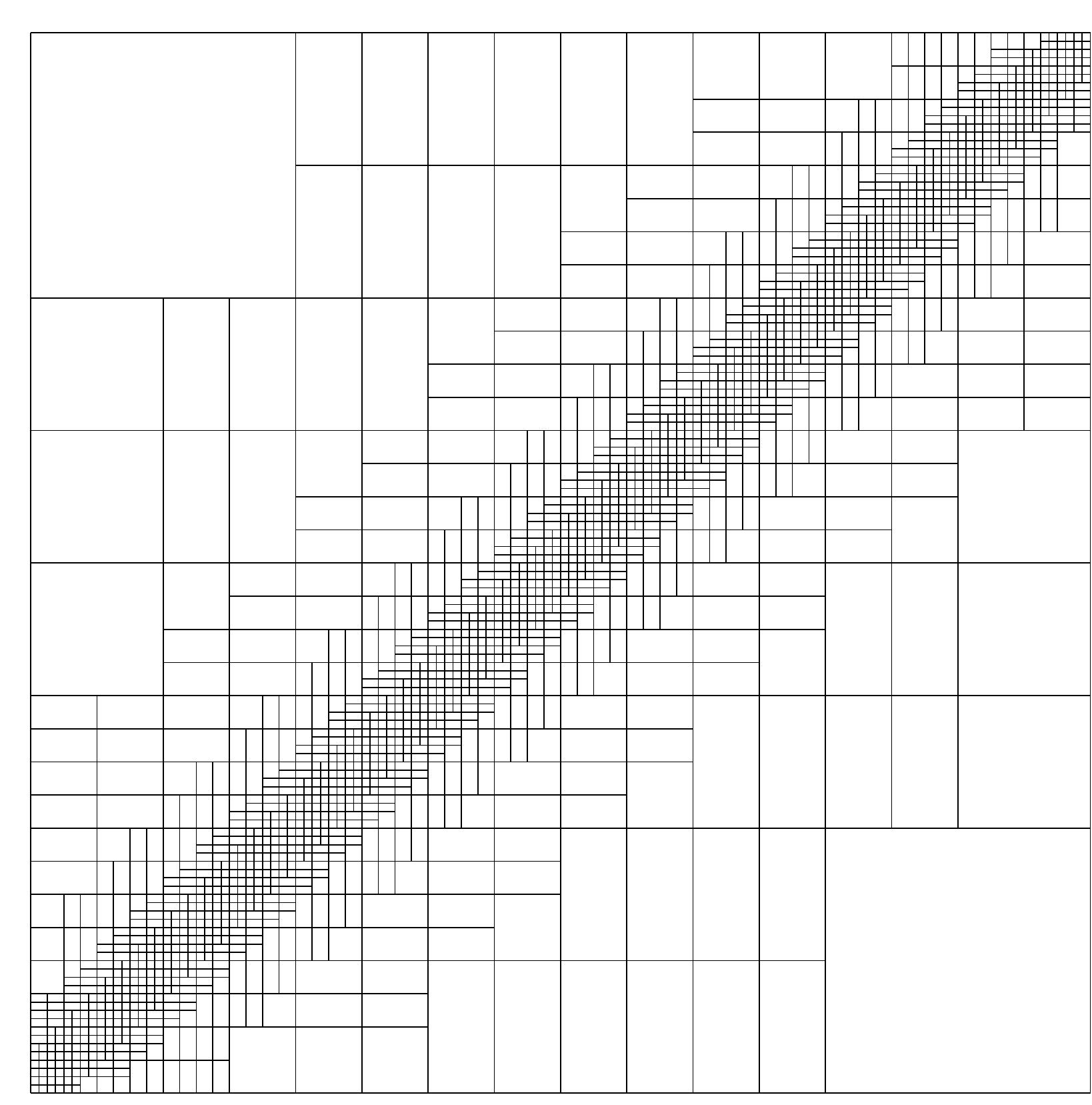}
}
\subfloat[2243 LR B-splines]{
\includegraphics[width=5.25cm, trim={12.5cm 2cm 11.5cm 1cm},clip]{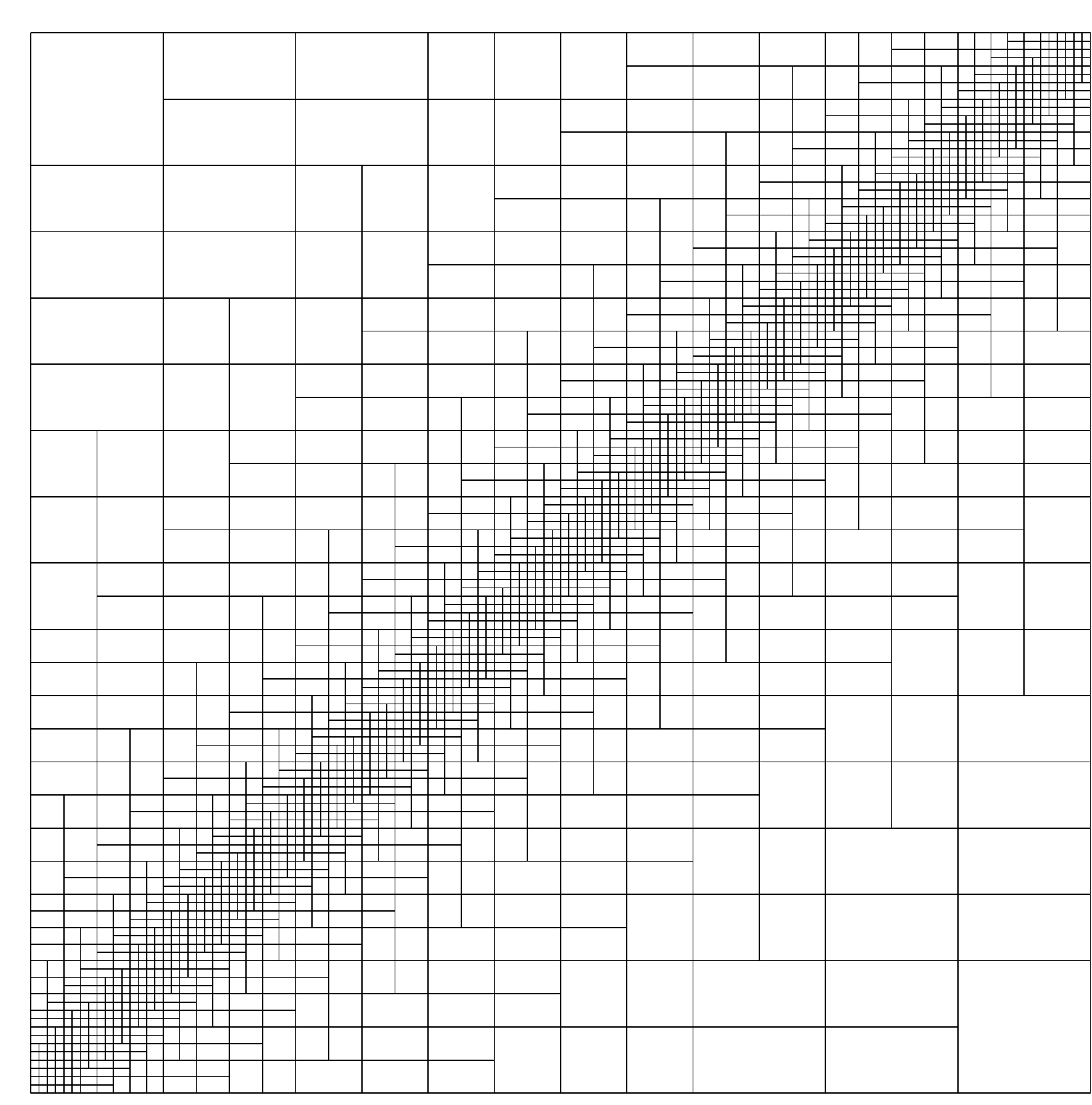}
}
\caption{Visual comparison between meshes of bidegree $(2,2)$ obtained by performing 7 iterations of different mesh refinement strategies along the diagonal: (a) the structured LR-mesh, (b) the \NtwoStwo{} LR-mesh, and (c) the LR-mesh proposed in \cite{meshbressan}.}\label{diag}
\end{figure}

In Figure~\ref{diag}(b) and Figure~\ref{N2SEx} one can see how the refinement in the \NtwoStwo{} LR-meshes propagates from the region where the structured mesh refinement has been applied. In all the considered cases, the refinement does not heavily spread out. It is important to highlight, however, that the prolongation of the splits needed to recover the \NtwoS-property is not unique. Indeed, when refining an LR B-spline to remove nestedness issues, the inserted split prolongations refine not only the considered LR B-spline but in general also other LR B-splines in the neighborhood. Then, some of the newly introduced neighboring LR B-splines might not need a \unilateral{} tensor expansion anymore and the ordering used for removing nestedness has thus an effect on the resulting mesh.

One might consider to treat all the LR B-splines ``in parallel'', i.e., first collect all the split extensions needed to remove nestedness in all the LR B-splines requiring a treatment and then insert all of them at the same time to refine the function basis. This could result in a more uniform propagation of the refinement out of the region where the structured mesh has been applied. 
On the other hand, by doing this, some split extensions could be unnecessary for recovering the \NtwoS-property. Therefore, in general, also the number of LR B-splines on these meshes would be higher than the number obtained when treating one LR B-spline at a time. In the examples presented in this paper we do not remove nestedness ``in parallel''. Hence, the resulting \NtwoStwo{} LR-meshes depend on the order used when the \unilateral{} tensor expansions are applied. On the other hand, the number of LR B-splines will be closer to the number of LR B-splines obtained when performing only the structured mesh refinement, i.e., closer to the ``optimal'' number of LR B-splines needed to reduce the error while preserving the local linear independence.
\begin{figure}[t!]
\centering
\subfloat[10281 LR B-splines]{
\includegraphics[width=6cm, angle=270, origin=c, trim={12.5cm 2cm 11.5cm 1cm},clip]{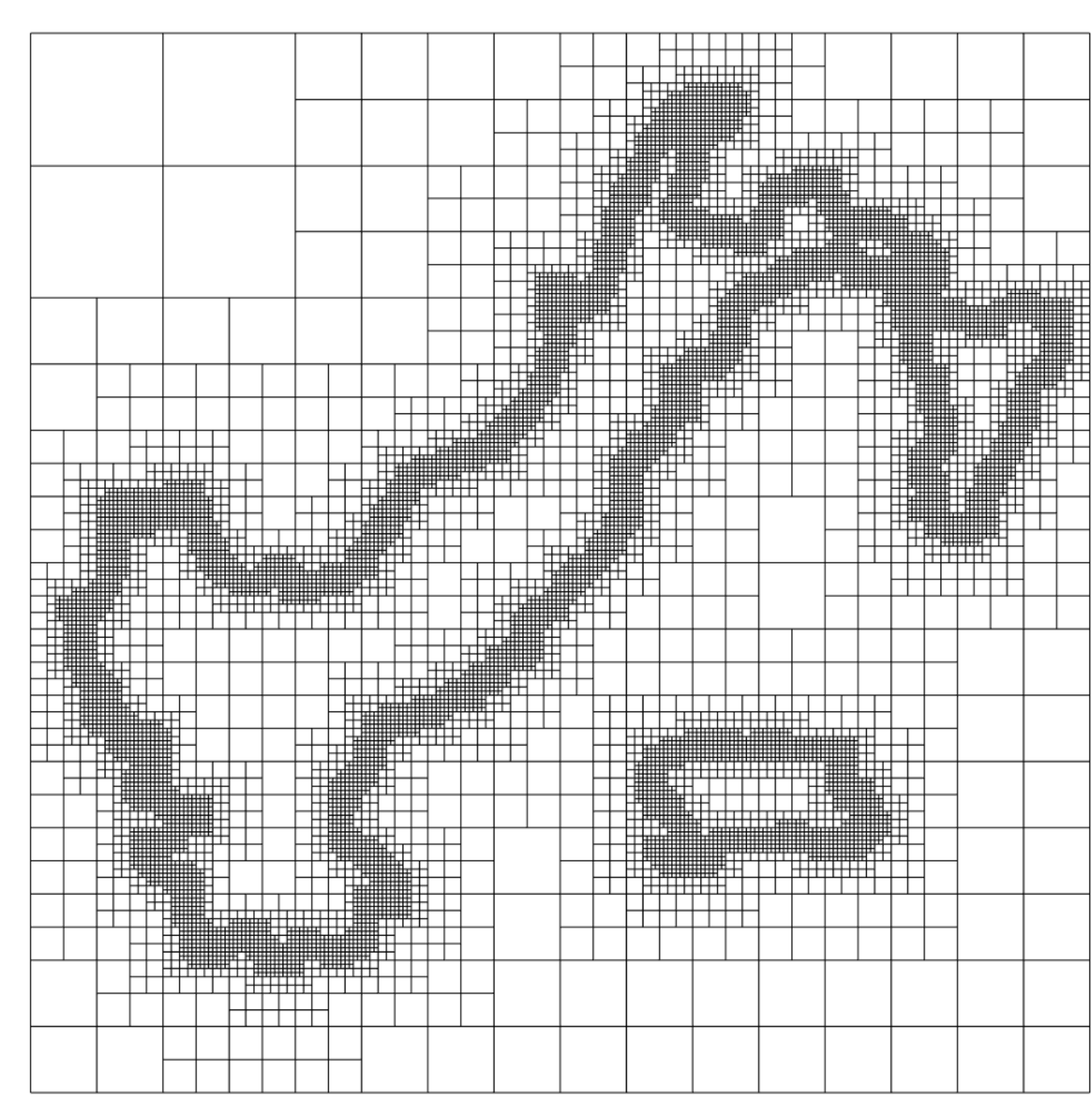}
}\quad
\subfloat[12438 LR B-splines]{
\includegraphics[width=6cm, angle=270, origin=c, trim={12.5cm 2cm 11.5cm 1cm},clip]{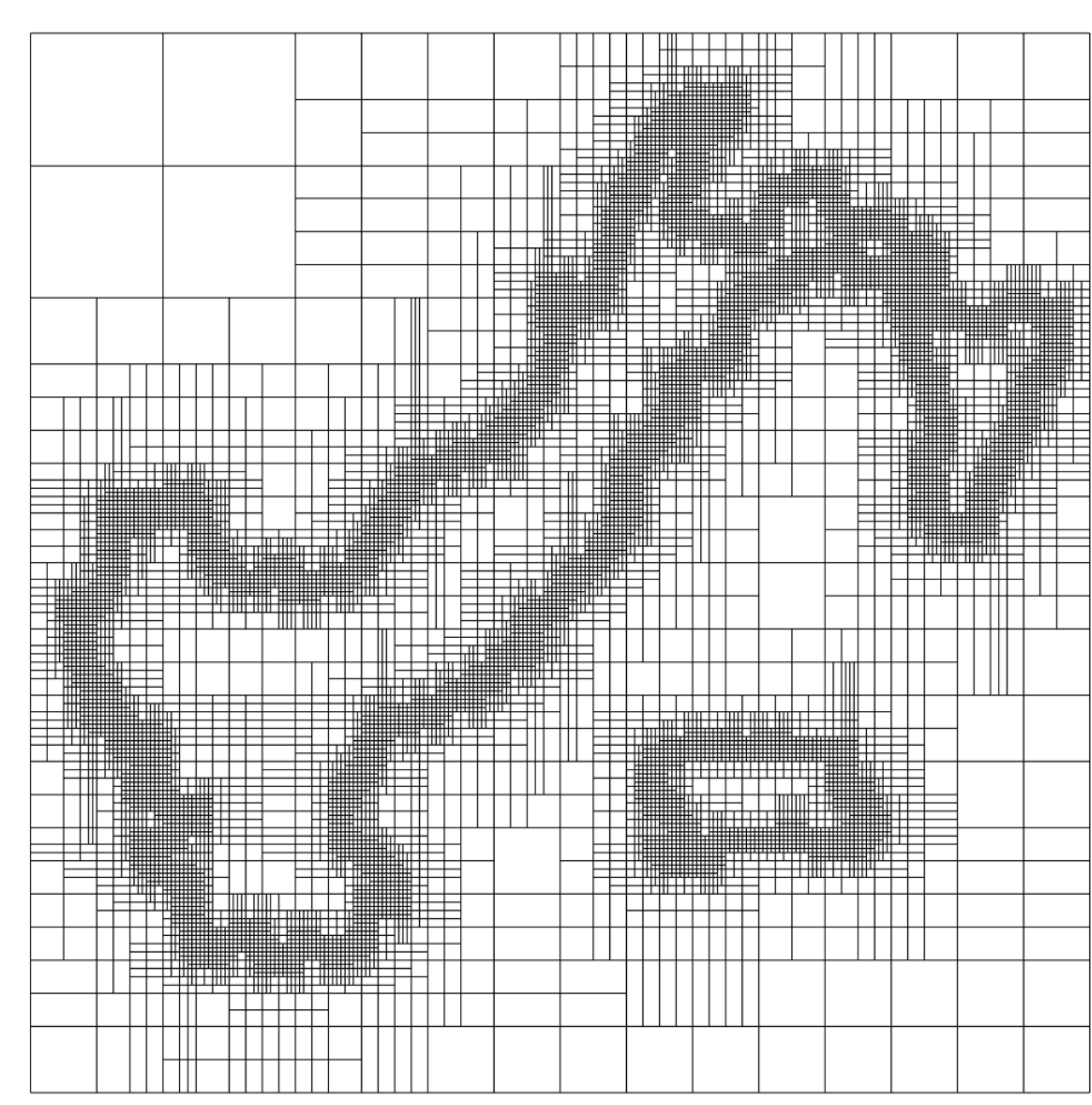}}\\
\subfloat[13459 LR B-splines]{
\includegraphics[width=6cm, angle=270, origin=c, trim={12.5cm 2cm 11.5cm 1cm},clip]{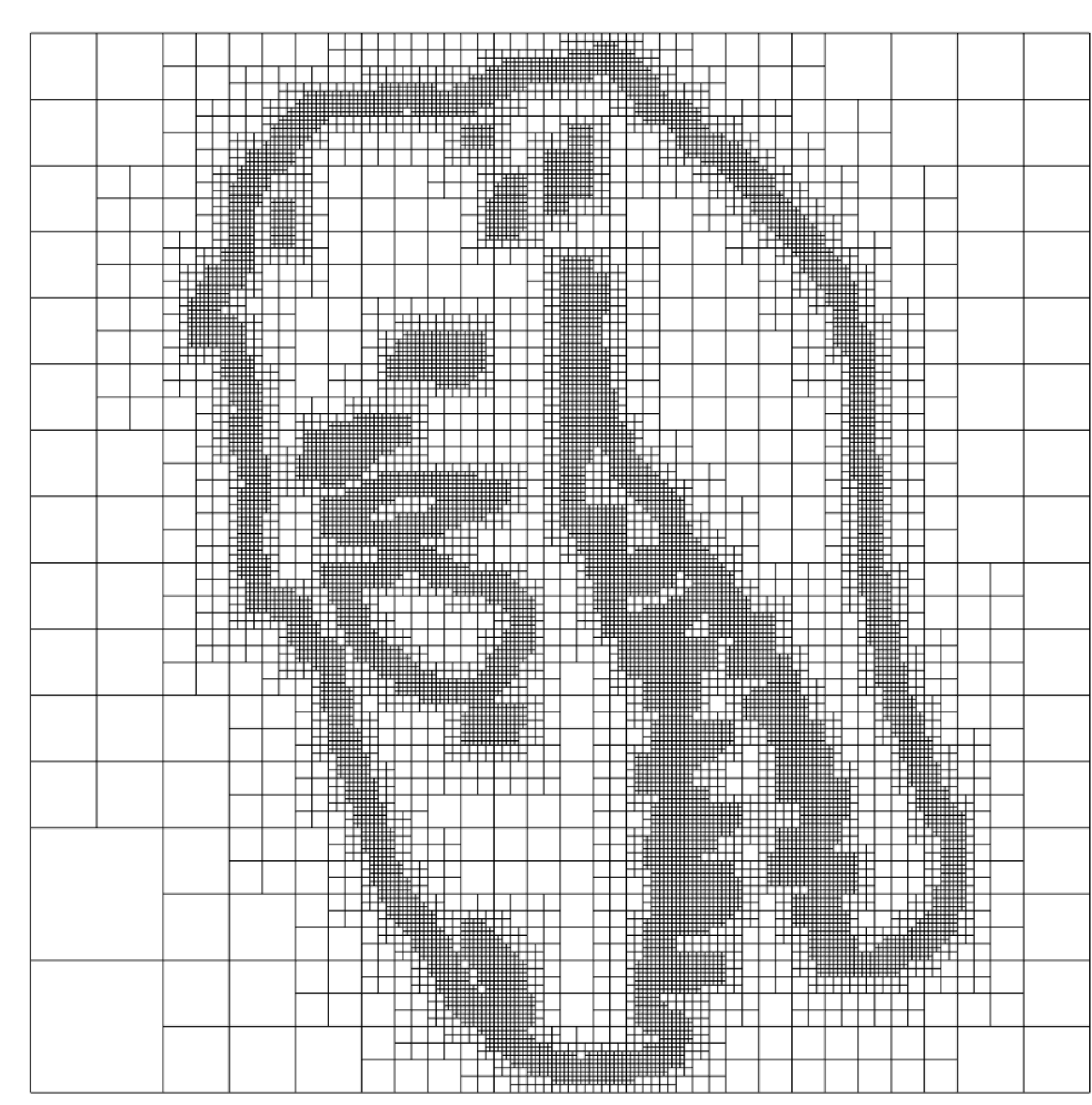}}\quad
\subfloat[15993 LR B-splines]{
\includegraphics[width=6cm, angle=270, origin=c, trim={12.5cm 2cm 11.5cm 1cm},clip]{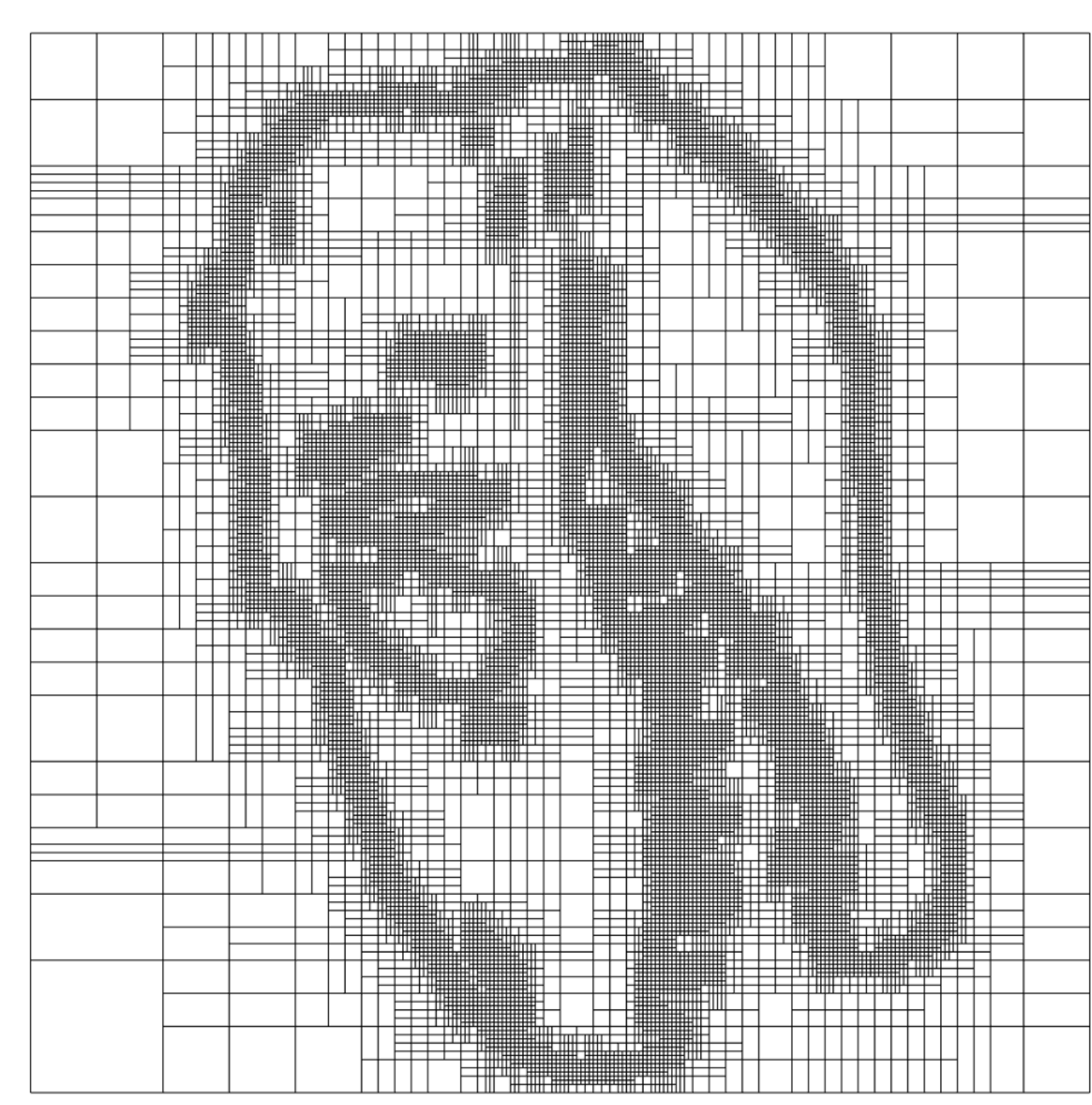}}\\
\subfloat[8608 LR B-splines]{
\includegraphics[width=6cm, angle=270, origin=c, trim={12.5cm 2cm 11.5cm 1cm},clip]{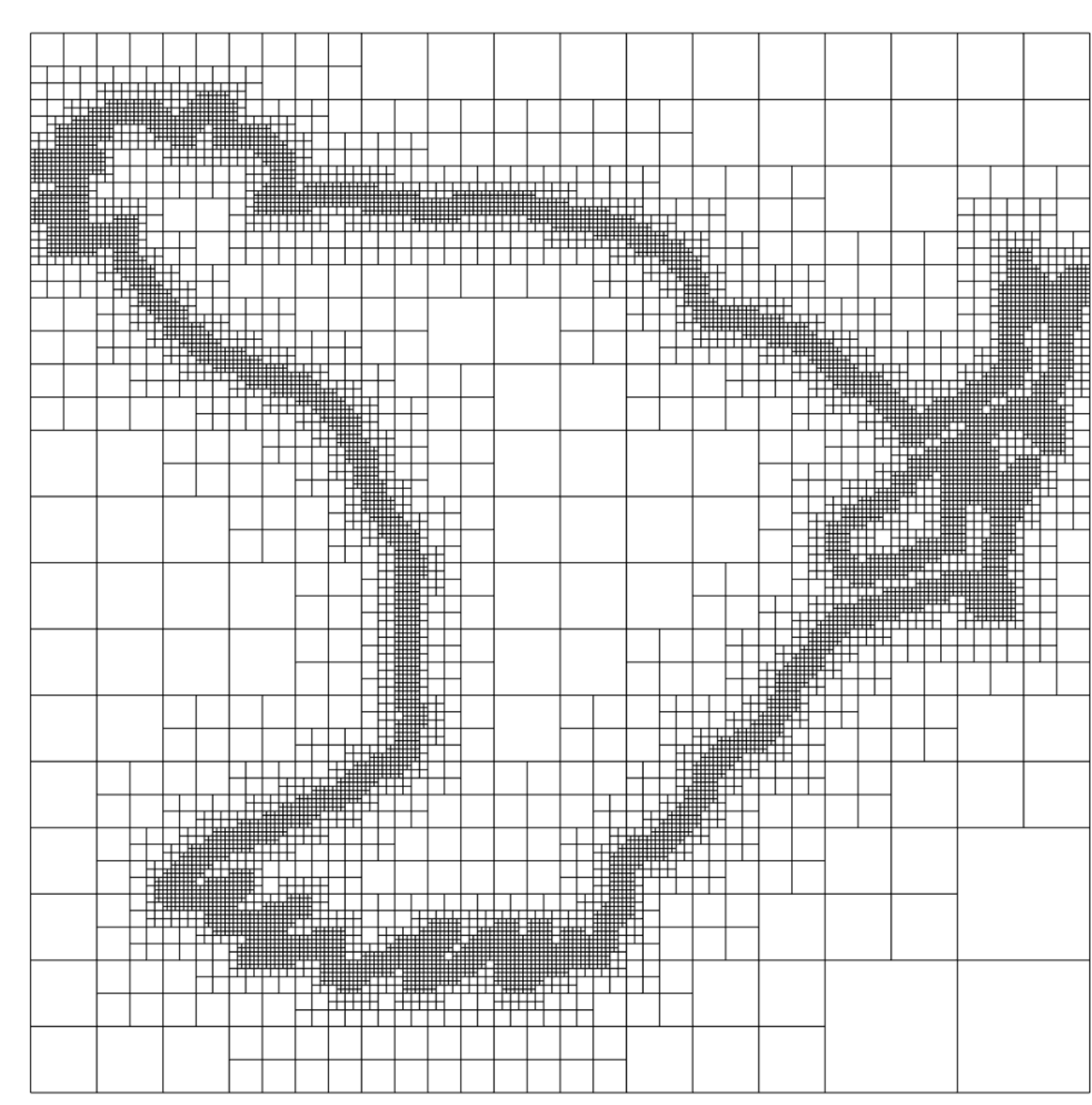}}\quad
\subfloat[10841 LR B-splines]{
\includegraphics[width=6cm, angle=270, origin=c, trim={12.5cm 2cm 11.5cm 1cm},clip]{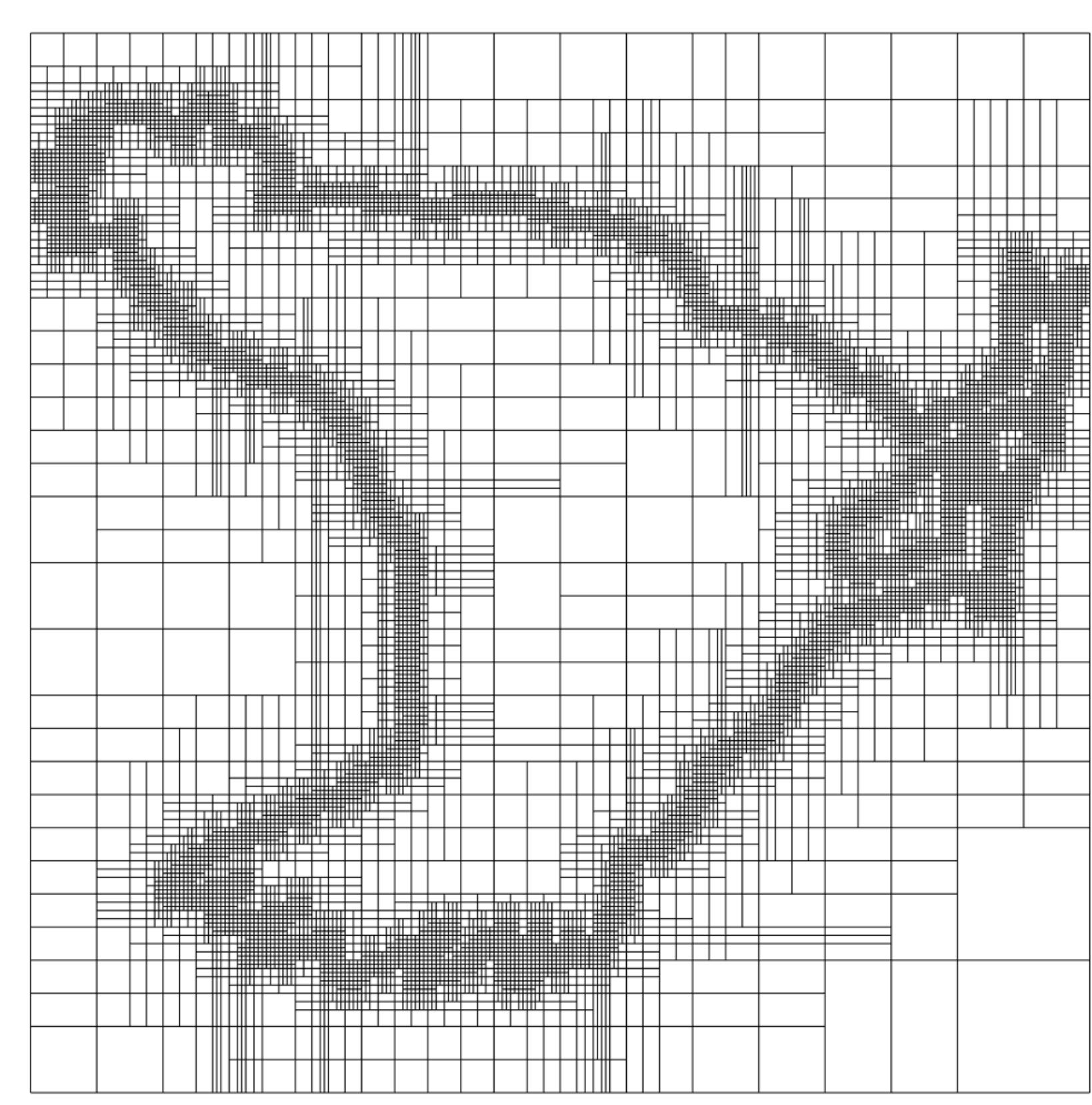}}
\caption{Meshes of bidegree $(2,2)$ obtained by performing 8 iterations of mesh refinement on 3 different regions resulting in a structured LR-mesh (left column) and an \NtwoStwo{} LR-mesh (right column).}\label{N2SEx}
\end{figure}

We finally remark that one might also opt for full tensor expansions in the supports, instead of \unilateral{} tensor expansions, to solve nestedness issues. The proof of Theorem~\ref{N2Sproperty} could be easily rephrased for the case of full tensor expansions. The key is that we only prolong splits provided by the structured mesh refinement performed at the beginning of the process. Therefore, if we do full tensor expansions, in the worst case scenario we would end up with a standard tensor mesh of size $h= 2^{-(i+1)}$ to define the set $\mathcal{B}^{i+1}$, instead of an LR-mesh tensorized in one direction. In such case, $\mathcal{B}$  would still become empty in a finite number of loop iterations. However, we decided to do the expansion of the splits only in one direction at a time because
it results in the least propagation.


\section{Application I: Quasi-interpolation}\label{sec:quasi-interpolation}
A quasi-interpolation method is a procedure to compute the coefficients assigned to the basis elements of a prescribed function space, with the aim of approximating a given arbitrary function or data set.
The resulting approximant is called a \highlight{quasi-interpolant (QI)}. The computation of any of such coefficients may depend only on the data/function restricted to the corresponding basis element's support (local method), and perhaps some neighboring other basis elements' supports, or it can depend on the data/function in the entire domain (global method), as in the least-squares method.
Given a function $f$ and an approximation space, whose basis is denoted by $\mathcal{B}$, we write a related QI in the form
\begin{equation*}
\mathfrak{Q}f := \sum_{B \in \mathcal{B}} \lambda_B(f) B,
\end{equation*}
where $\lambda_B(f)$ is the coefficient of the basis element $B\in \mathcal{B}$ computed by the selected method.
\begin{definition}
A quasi-interpolation method such that $\mathfrak{Q}f = f$ for all $f$ in a space $V$ is said to \highlight{reproduce} the space $V$.
\end{definition}

When using spline spaces of bidegree $\pmb{p}$ as approximation spaces, a common requirement is that the polynomial space $\Pi_{\pmb{p}}$ is reproduced by the quasi-interpolation method, in order to ensure good approximation properties.
A general recipe for constructing local quasi-interpolation methods for tensor spline spaces, with the polynomial reproduction property, can be found in \cite{QIlyche}.
\begin{rec}\label{rec:QI-TP}
Let $f$ be a given function defined on the rectangle $\Omega$. Given a bidegree $\pmb{p}$, let
$\widetilde{\cM}$ be an open tensor mesh on $\Omega$,
and let $\mathcal{B}(\widetilde{\cM})$ be the set of tensor B-splines of bidegree $\pmb{p}$ on $\widetilde{\cM}$.
We compute the coefficient $\lambda_B(f)$, for every $B = B[\pmb{x}, \pmb{y}] \in \mathcal{B}(\widetilde{\cM})$, as follows.
\begin{enumerate}
\item Let $U\subseteq \RR^2$ be an open set that intersects the interior of $\supp B$ (for instance, $U$ can be a box-partition element of $\cM(\pmb{x}, \pmb{y})$), and let $\mathcal{B}(U)$ be the subset of $\mathcal{B}(\widetilde{\cM})$ consisting of all the tensor B-splines not identically zero on $U$.
\item Choose a local polynomial approximation method $\mathfrak{P}_U$ such that $\mathfrak{P}_Ug = g$ for all $g \in \Pi_{\pmb{p}}$ defined on $U$ (typical choices are least-squares or interpolation methods).
Letting $g_{|U}$ be the restriction of $g$ to $U$, we can write
$$(\mathfrak{P}_Uf)_{|U} = \sum_{\widetilde{B} \in \mathcal{B}(U)} b_{\widetilde{B}}(f) \widetilde{B}_{|U},$$
for some coefficients $b_{\widetilde{B}}(f)$ provided by the chosen local approximation method.
\item Since $B \in \mathcal{B}(U)$, set $\lambda_B(f) := b_{B}(f)$.
\end{enumerate}
Then, define
\begin{equation*}
\mathfrak{Q}f := \sum_{B \in \mathcal{B}(\widetilde{\cM})} \lambda_B(f) B.
\end{equation*}
\end{rec}

Inspired by the above recipe for tensor splines and similar to the local quasi-interpolation strategy developed for THB-splines in \cite{effortless,effortless-error}, we can formulate a general recipe for constructing QIs in the space spanned by $\mathcal{B}^{\mathcal{LR}}(\cM)$ on a given open LR-mesh $\cM$ as follows:
select for each LR B-spline $B$ in $\mathcal{B}^{\mathcal{LR}}(\cM)$ a local tensor space containing $B$, and pick the coefficient corresponding to $B$ in the expression of any QI in such a tensor space. In particular, when the smallest local tensor space  containing each basis function $B$ is considered, we arrive at the following recipe.
\begin{rec}\label{rec:QI-LR}
Let $f$ be a given function defined on the rectangle $\Omega$.
Given a bidegree $\pmb{p}$, let $\cM$ be an open LR-mesh on $\Omega$, and let $\mathcal{B}^{\mathcal{LR}}(\cM)$ be the set of LR B-splines of bidegree $\pmb{p}$ on $\cM$.
We compute the coefficient $\lambda_B(f)$, for every $B = B[\pmb{x}, \pmb{y}]\in \mathcal{B}^{\mathcal{LR}}(\cM)$, as follows.
\begin{enumerate}
\item Let $\widetilde{\cM}_B$ be the open (tensor) mesh obtained by rising the boundary meshline multiplicities of $\cM_B =\cM(\pmb{x}, \pmb{y})$ to full multiplicity, and
let $\mathcal{B}(\widetilde{\cM}_B)$ be the set of tensor B-splines defined on $\widetilde{\cM}_B$.
\item Consider a quasi-interpolation method in the space spanned by $\mathcal{B}(\widetilde{\cM}_B)$,
    $$\mathfrak{Q}_Bf = \sum_{\widetilde{B} \in \mathcal{B}(\widetilde{\cM}_B)} b_{\widetilde{B}}(f) \widetilde{B},$$
    reproducing all $g \in \Pi_{\pmb{p}}$ (for instance, use Recipe~\ref{rec:QI-TP}).
\item Since $B \in \mathcal{B}(\widetilde{\cM}_B)$, set $\lambda_B(f) := b_{B}(f)$.
\end{enumerate}
Then, define
\begin{equation*}
\mathfrak{Q}f := \sum_{B \in \mathcal{B}^{\mathcal{LR}}(\cM)} \lambda_B(f) B.
\end{equation*}
\end{rec}
Since $B\in \mathcal{B}(\widetilde{\cM}_B)$ for any $B\in \mathcal{B}^{\mathcal{LR}}(\cM)$,
the function $\mathfrak{Q}f$ in Recipe~\ref{rec:QI-LR} is well defined. Moreover, it will reproduce polynomials on the entire domain if the LR-mesh has the \NtwoS-property as stated in the following proposition.

\begin{prop}\label{polyre}
Given a bidegree $\pmb{p}$, let $\cM$ be an open LR-mesh and let $\mathcal{B}^{\mathcal{LR}}(\cM)$ be the set of LR B-splines of bidegree $\pmb{p}$ on $\cM$. Assume that $\cM$ has the \NtwoS-property, then
$$\mathfrak{Q}g = g, \quad \forall g \in \Pi_{\pmb{p}},$$
where the quasi-interpolation operator $\mathfrak{Q}$ is defined in Recipe~\ref{rec:QI-LR}.
\end{prop}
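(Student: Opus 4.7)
The plan is to reduce the claim to an element-by-element argument that exploits the local basis property implied by the \NtwoS-property via Theorem~\ref{TeoBressan}, combined with a Marsden-type identification of the coefficients $\lambda_B(g)$.

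First I would identify $\lambda_B(g)$ intrinsically. By construction in Recipe~\ref{rec:QI-LR}, $\lambda_B(g)=b_B(g)$ is the coefficient of $B$ in the tensor B-spline expansion of $g$ on $\widetilde{\cM}_B$, and the polynomial reproduction of $\mathfrak{Q}_B$ guarantees that this expansion equals $g$ on $\supp B$. Since the tensor B-splines on $\widetilde{\cM}_B$ are linearly independent, $b_B(g)$ is uniquely determined; by the bivariate de Boor--Fix functional (that is, Marsden's identity applied in each coordinate), it equals the blossom value $g^*(x_2,\ldots,x_{p_1+1};\,y_2,\ldots,y_{p_2+1})$. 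Thus $\lambda_B(g)$ depends only on the local knot vectors of $B$, neither on the ambient mesh nor on the specific choice of $\mathfrak{Q}_B$.

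Next, I would fix an arbitrary box-partition element $\beta\in\mathcal{E}$ and argue locally. By Theorem~\ref{TeoBressan}, the \NtwoS-property forces the set $\{B\in\mathcal{B}^{\mathcal{LR}}(\cM):\supp B\supseteq\beta\}$ to have exactly $N=(p_1+1)(p_2+1)$ elements, all linearly independent on $\beta$. Since each such $B$ is a polynomial of bidegree $\pmb{p}$ on $\beta$ and $N=\dim\Pi_{\pmb{p}}$, their restrictions form a basis of $\Pi_{\pmb{p}}|_\beta$. Consequently $g|_\beta$ admits a unique expansion $g|_\beta=\sum_{B\,:\,\supp B\supseteq\beta} c_B^\beta\, B|_\beta$, and the proposition reduces to showing $c_B^\beta=\lambda_B(g)$ for every such $B$.

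To match these coefficients I would propagate the classical Marsden identity through the LR refinement process of Definition~\ref{defLR}. On the initial tensor mesh $\cM_1$ the identity $g=\sum_{B\in\mathcal{B}_1}\mu_B^*(g)B$ is standard for tensor B-splines, where $\mu_B^*(g)$ denotes the blossom at the interior knots of $B$. Each step of the algorithm refines a single LR B-spline via a univariate knot insertion $B=\alpha_1 B_1+\alpha_2 B_2$; multi-affinity of the blossom ensures that the coefficient every newly created $B_k$ inherits from this insertion is precisely the blossom evaluated at its new interior knots. Iterating this invariant through all refinements yields $g=\sum_{B\in\mathcal{B}^{\mathcal{LR}}(\cM)}\mu_B^*(g)B$ on $\Omega$; restricting to $\beta$ and combining with the uniqueness of the local expansion from the previous paragraph, $c_B^\beta=\mu_B^*(g)=\lambda_B(g)$, hence $\mathfrak{Q}g=g$.

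The main obstacle is in the propagation step. LR refinement is selective (only B-splines whose supports are traversed by the newly inserted split get refined) and, because set union is used in place of multiset union, the same child B-spline may arise from refining several parents and one must check that the coefficients accumulate to the correct blossom value at its new interior knots. This is precisely the bookkeeping of a partial tensor knot insertion restricted to the affected B-splines, and the global linear independence guaranteed by the \NtwoS-property pins down the expansion uniquely, so any valid propagation of Marsden's identity is forced to coincide with $\sum_B\lambda_B(g)B$, closing the argument.
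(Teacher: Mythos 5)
Your overall architecture is sound, and the second half of your argument --- identifying $\lambda_B(g)$ with the de Boor--Fix/blossom functional evaluated at the interior knots of $B$, hence depending only on the local knot vectors and not on the ambient tensor space $\widetilde{\cM}_B$ or on the particular choice of $\mathfrak{Q}_B$ --- is exactly the observation the paper relies on. The reduction to a single box-partition element via item~3 of Theorem~\ref{TeoBressan} is also fine. The structural difference is that the paper does not prove the key identity $g=\sum_{B}\mu_B^*(g)\,B$ at all: it cites \cite[Theorem~4.6]{someproperties}, which asserts precisely that on a mesh with the \NtwoS-property every $g\in\Pi_{\pmb{p}}$ is reproduced by the LR B-splines with coefficients depending only on the knots of each $B$. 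You instead attempt to establish this identity from scratch, and that is where your proof has a genuine gap.

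The propagation step does not work as stated. When the algorithm of Definition~\ref{defLR} replaces $B=\alpha_1B_1+\alpha_2B_2$ by $\{B_1,B_2\}$, the coefficient that $B_1$ inherits from $B$ is $\alpha_1\mu_B^*(g)$, which is in general \emph{not} $\mu_{B_1}^*(g)$: in full tensor knot insertion the new coefficient of a child is a combination of the coefficients of \emph{two} adjacent parents, and in the selective LR process the complementary parent need not be present in the current set, need not be refined at that step, or may produce a copy of $B_1$ that is already in the set (whereupon the set union collapses the duplicate and the contributions do not accumulate to the blossom value). Indeed, if the invariant ``coefficient $=$ blossom value'' propagated unconditionally through Definition~\ref{defLR}, then taking $g=1$ would show that LR B-splines always form a partition of unity, contradicting the equivalence in Theorem~\ref{TeoBressan}; so the \NtwoS-property must enter the propagation itself, not merely at the end. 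Your fallback --- that local linear independence ``pins down the expansion uniquely, so any valid propagation is forced to coincide with $\sum_B\lambda_B(g)B$'' --- is circular: uniqueness tells you that the coefficients $c_B^\beta$, whatever they are, are determined, not that they equal the blossom values; to identify them you would need, for instance, the duality $\lambda_B(B')=\delta_{B,B'}$ for distinct LR B-splines nonzero on a common element, which is again essentially the content of the cited theorem. To close the argument, either invoke \cite[Theorem~4.6]{someproperties} as the paper does, or supply an actual proof of that local duality under the \NtwoS-assumption.
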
%
\begin{proof}
From \cite[Theorem~4.6]{someproperties}, if $\cM$ has the \NtwoS-property, then for all $g \in \Pi_{\pmb{p}}$
we have
$$
g=\sum_{B \in \mathcal{B}^{\mathcal{LR}}(\cM)} g_B B,\quad g_B\in \RR,
$$
where for all $B \in \mathcal{B}^{\mathcal{LR}}(\cM)$, the coefficient $g_B$ only depends on $g$ and on the knots defining the LR B-spline $B$. Therefore, $g_B$ remains the same if we represent $g$ in any set of tensor B-splines  containing $B$. Since, according to Recipe~\ref{rec:QI-LR}, any $\mathfrak{Q}_B$ reproduces all polynomials in $ \Pi_{\pmb{p}}$ we have
$$
g_B=\lambda_B(g), \quad \forall B  \in \mathcal{B}^{\mathcal{LR}}(\cM), \quad g\in \Pi_{\pmb{p}},
$$
which completes the proof.
\end{proof}
\begin{figure}[h!]\hspace*{-1.0cm}
\begin{floatrow}
\ffigbox{%
\includegraphics[width=8cm, trim={1cm 2.15cm 0cm 5.5cm},clip]{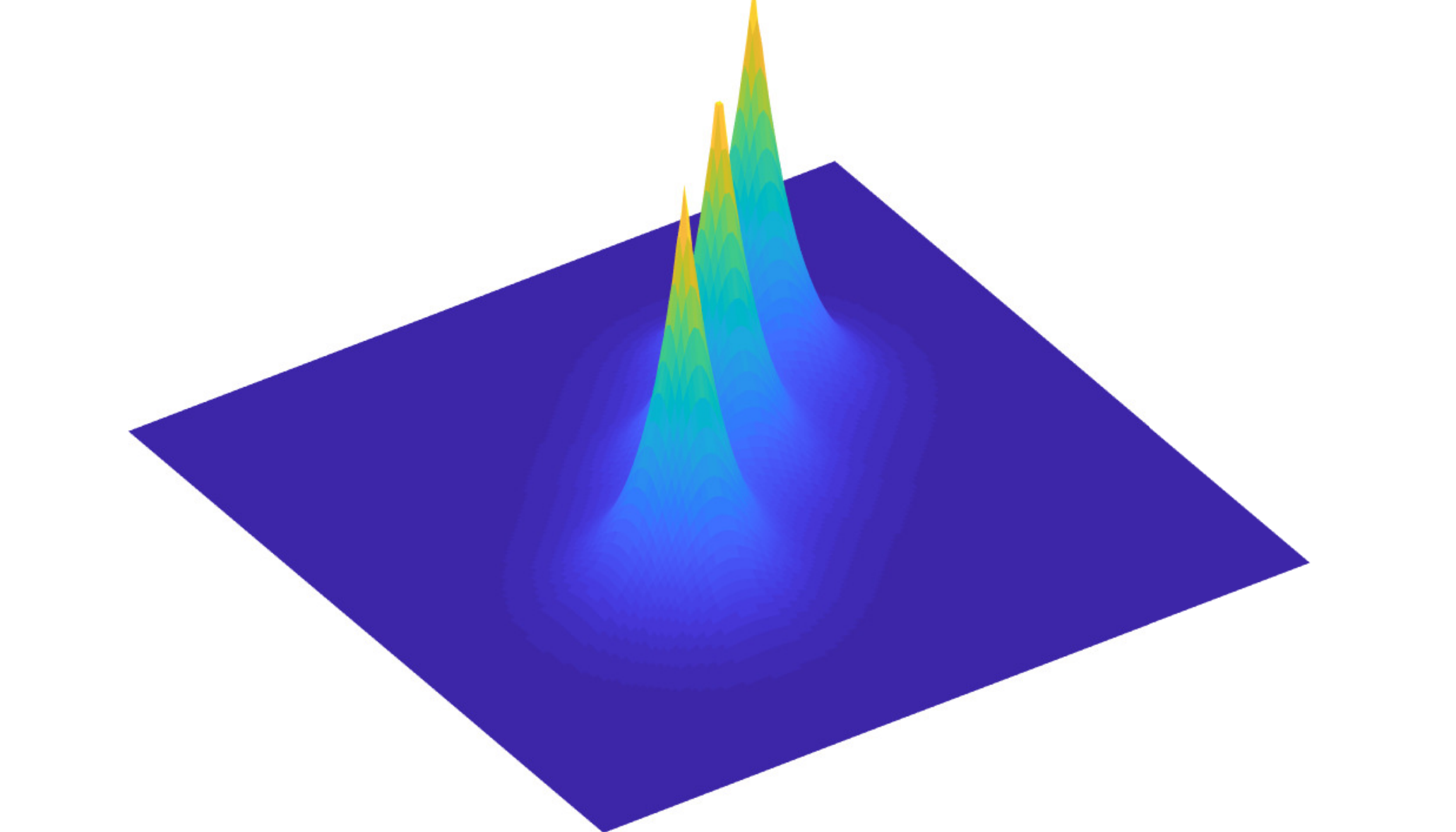}
}{%
  \caption{Transcendent function in $[-1,1]^2$.}\label{figex7}%
}
\capbtabbox{%
\footnotesize{
\begin{tabular}{l|ll|l}
\cline{2-3}
                             & \multicolumn{2}{c|}{\# basis functions} &                                \\ \hline
\multicolumn{1}{|l|}{levels} & Tensor mesh       & \NtwoStwo{} LR-mesh      & \multicolumn{1}{l|}{max error} \\ \hline
\multicolumn{1}{|l|}{1}      & 36                & 36                  & \multicolumn{1}{l|}{5.686e-01} \\ \hline
\multicolumn{1}{|l|}{2}      & 100               & 86                  & \multicolumn{1}{l|}{4.645e-01} \\ \hline
\multicolumn{1}{|l|}{3}      & 324               & 161                 & \multicolumn{1}{l|}{2.575e-01} \\ \hline
\multicolumn{1}{|l|}{4}      & 1156              & 254                 & \multicolumn{1}{l|}{1.472e-01} \\ \hline
\multicolumn{1}{|l|}{5}      & 4356              & 363                 & \multicolumn{1}{l|}{5.955e-02} \\ \hline
\multicolumn{1}{|l|}{6}      & 16900             & 450                 & \multicolumn{1}{l|}{2.156e-02} \\ \hline
\multicolumn{1}{|l|}{7}      & 66564             & 537                 & \multicolumn{1}{l|}{1.415e-02} \\ \hline
\end{tabular}}
}{%
  \caption{B-spline set cardinalities of bidegree $(2,2)$ for different levels of maximal resolution.}\label{ex7}%
}
\end{floatrow}
\end{figure}
We have tested the quasi-interpolation strategy described in Recipe~\ref{rec:QI-LR} on \NtwoStwo{} LR-meshes to approximate polynomials and transcendent functions. Given an \NtwoStwo{} LR-mesh $\cM$, this recipe requires the construction of a QI based on $\mathcal{B}(\widetilde{\cM}_B)$ for each of the LR B-splines $B \in \mathcal{B}^{\mathcal{LR}}(\cM)$ of bidegree $\pmb{p}= (p_1,p_2)$. Following Recipe~\ref{rec:QI-TP}, we have used interpolation as local approximation method in the computation of these QIs. More precisely, we have selected a unisolvent set of $(p_1+1)(p_2+1)$ interpolation points, organized in a tensor grid over a single box-partition element of $\widetilde{\cM}_B$, and then we have set a linear system by evaluating $f$ and the tensor B-splines in $\mathcal{B}(\widetilde{\cM}_B)$ at these points.
This guarantees polynomial reproduction (actually spline reproduction) of the quasi-interpolation method in the spaces $\mathcal{B}(\widetilde{\cM}_B)$, for $B$ varying in $\mathcal{B}^{\mathcal{LR}}(\cM)$. Therefore, also the resulting quasi-interpolation method on $\mathcal{B}^{\mathcal{LR}}(\cM)$ has the polynomial reproduction property thanks to Proposition~\ref{polyre}. Indeed, in all the tests with polynomial functions of bidegree at most $\pmb{p}$, the maximum error was in the order of the machine precision, regardless of the number of iterations performed to construct the \NtwoStwo{} LR-mesh. The maximum error was computed on a uniform 150 $\times$ 150 grid.

As test with a transcendent function, we have considered $$
f(x,y) = \frac{2}{3}e^{-\sqrt{(10x-3)^2+(10y-3)^2}}+\frac{2}{3}e^{-\sqrt{(10x+3)^2+(10y+3)^2}}+\frac{2}{3}e^{-\sqrt{(10x)^2+(10y)^2}},
$$
which is characterized by three steep peaks on the square $[-1,1]^2$ located at $(-0.3,-0.3)$, $(0,0)$ and $(0.3,0.3)$; see Figure~\ref{figex7}.
This function has also been used in \cite{effortless} to investigate the approximation power of a similar quasi-interpolation method developed for THB-splines.
In Table~\ref{ex7}, we compare the number of basis functions of bidegree $(2,2)$ when considering global tensor meshes and local \NtwoStwo{} LR-meshes for different levels of maximal resolution (for level $\ell$, the smallest box-partition elements on the mesh have length $2^{-\ell}$). The \NtwoStwo{} LR-mesh is obtained by refining the LR B-splines whose supports contain one of the three points where a peak occurs via structured mesh refinement and then by recovering the \NtwoS-property via \unilateral{} tensor expansions. For a given maximal resolution level, the optimal maximum error, i.e., the maximum error when using the global tensor mesh, is preserved by the \NtwoStwo{} LR-mesh. However, the number of B-splines is significantly different and the discrepancy exponentially grows with the maximal resolution level.


\section{Application II: Isogeometric analysis}\label{sec:isogeometric-analysis}
Isogeometric analysis (IgA), introduced in \cite{IgA}, is a technique to perform numerical simulations on complex geometries. The numerical solution is represented by means of the same functions used for the domain modeling. Nowadays, complex geometries are expressed in terms of computed aided design (CAD) technologies, such as B-splines, non-uniform rational B-splines (NURBS) and their generalizations to address adaptive refinements.

In this section, we adopt the IgA approach, using our LR refinement strategy, to approximate the solution of the Poisson problem on $\Omega = [0,1]^2$,
\begin{equation}\label{PP}
\left\{\begin{array}{rl}
-\Delta u = f, & \mbox{in } \mathring{\Omega}, \\
u = u_D, & \mbox{on }\partial \Omega,
\end{array}\right.
\end{equation}
whose exact solution is
$$
u(x,y) = \arctan\left(100\left(\sqrt{(x-1.25)^2 + (y+0.25)^2}-\frac{\pi}{3}\right)\right);
$$
see Figure~\ref{PoissonSol}.
This example is a good benchmark for numerical schemes, as the sharp interior layer of the exact solution highlights the approximation quality, and it has been used extensively in the literature; see, e.g., \cite{pozzo2,johannessen,pozzo1}.
\begin{figure}[t!]
\centering
\includegraphics[width=8cm, trim={1cm 4cm 0cm 2.5cm},clip]{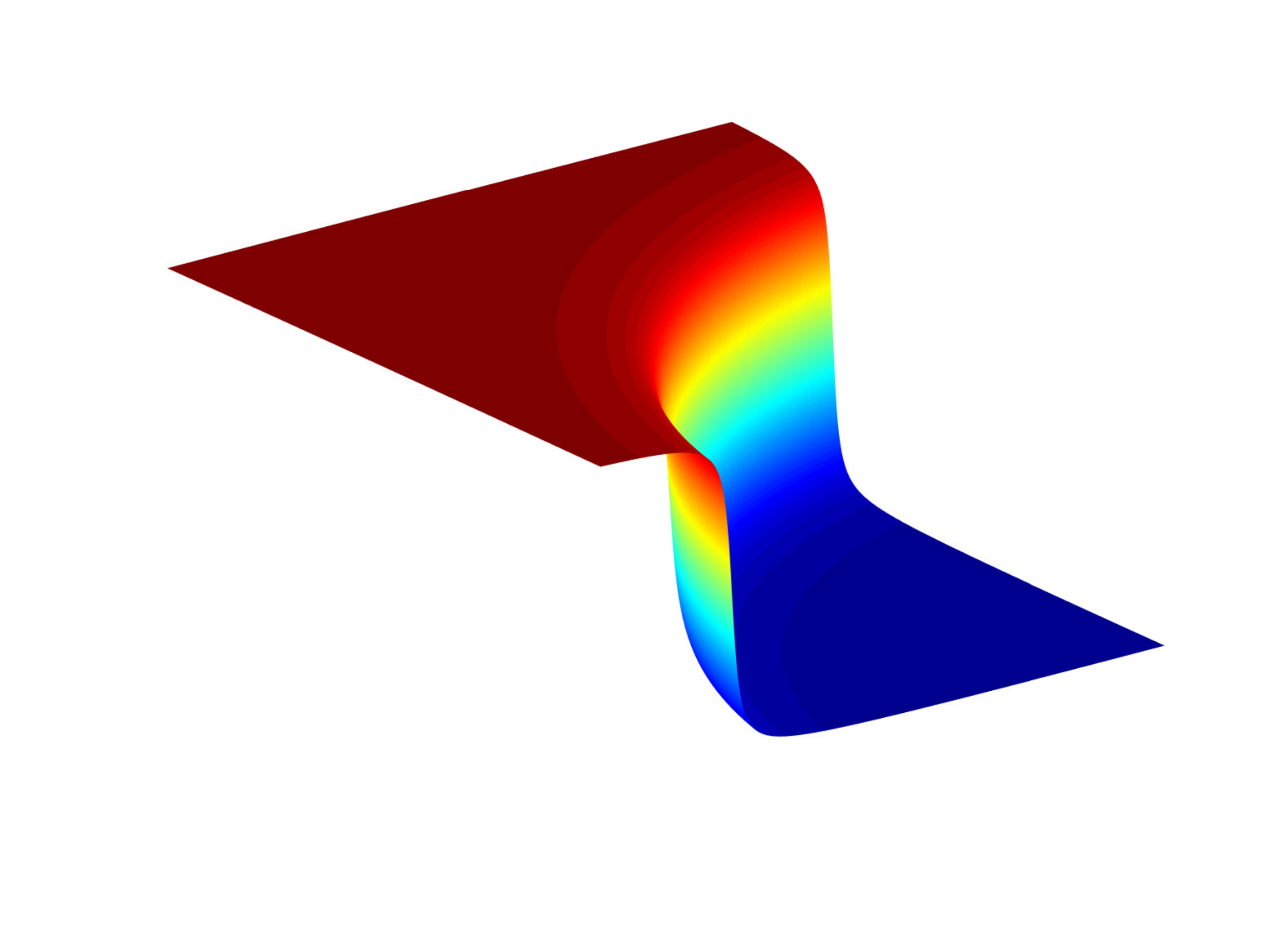}
\caption{Exact solution of the Poisson problem \eqref{PP}.}\label{PoissonSol}
\end{figure}

In the context of Galerkin discretizations, two properties are desirable: \begin{itemize}
\item (local) linear independence of the space generators,
\item refinement adaptivity.
\end{itemize}
The linear independence of the functions used as building blocks of the numerical solution avoids the numerical complexity posed by the singularity of the matrices associated to the problem discretization.
The refinement adaptivity is desired for balancing accuracy and computational cost as it allows for a higher precision, only there where it is needed to reproduce fast variations of the exact solution.
LR B-splines on \NtwoStwo{} LR-meshes are suitable candidates as both the (local) linear independence of the space generators and the adaptivity of the refinement are guaranteed.

\begin{figure}[t!]
\centering
\subfloat[]{
\includegraphics[width=7cm, trim={11.25cm .5cm 11cm 1cm},clip]{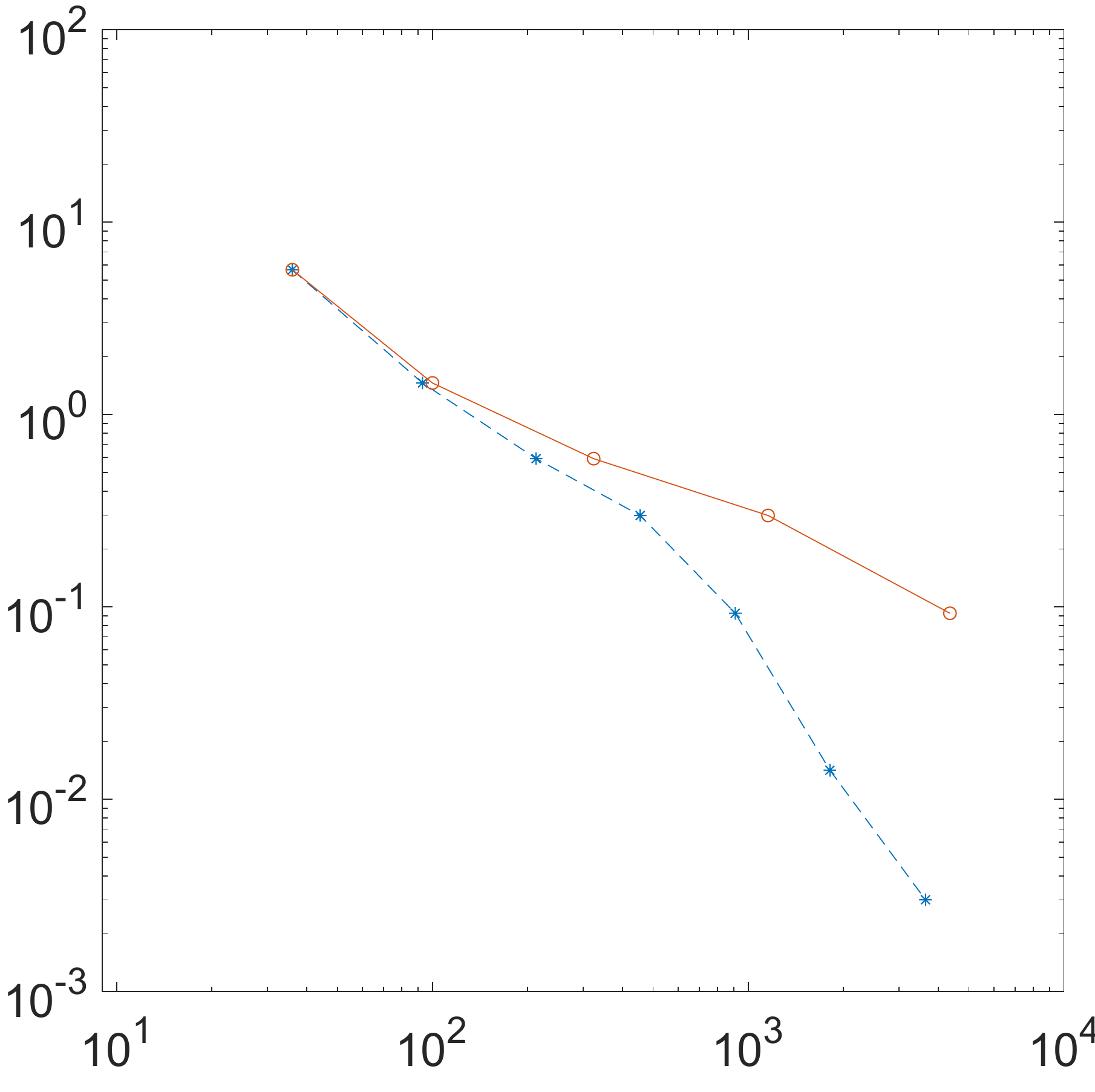}
}\quad
\subfloat[]{
\includegraphics[width=7cm, trim={11.25cm .5cm 11cm 1cm},clip]{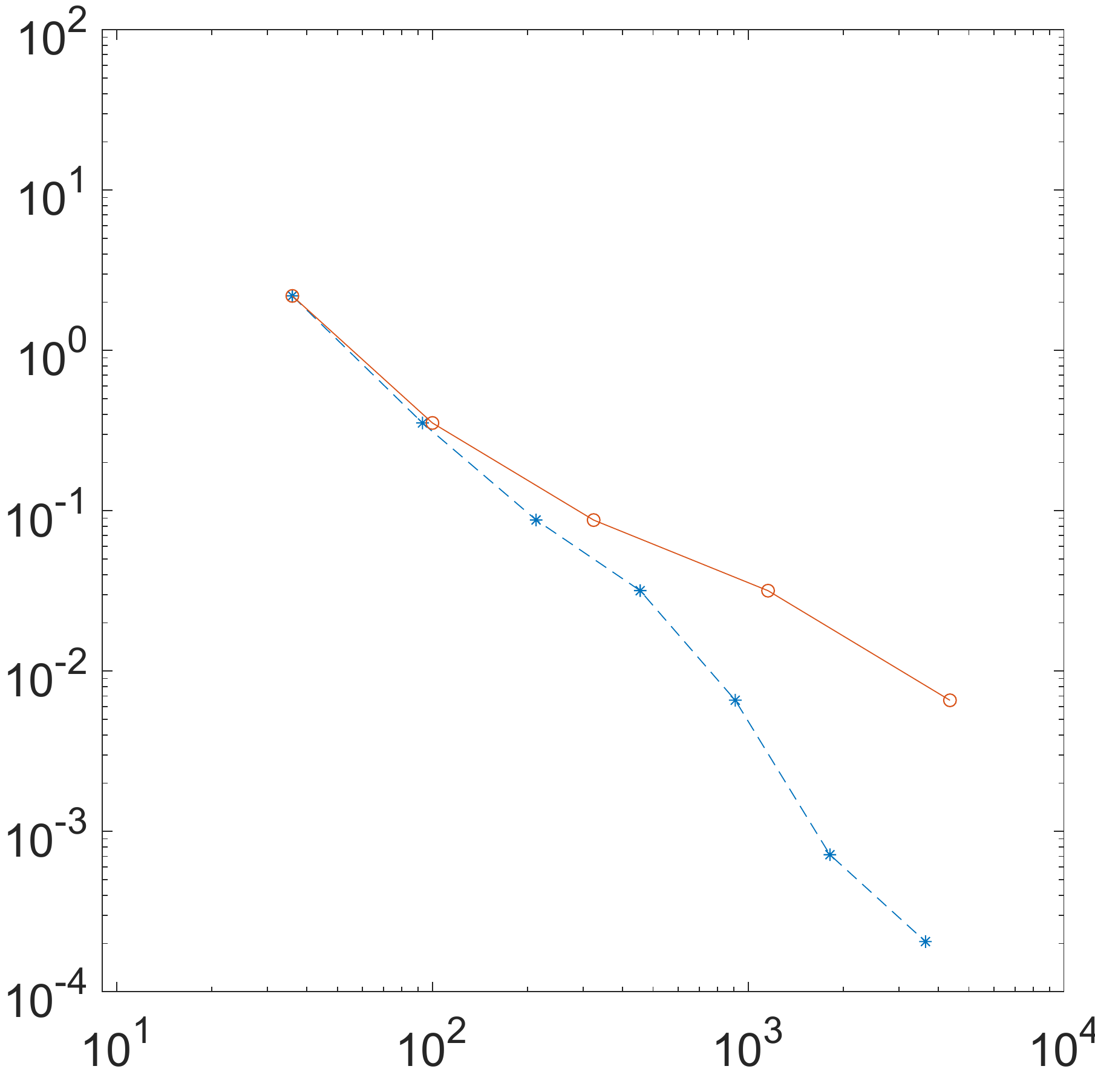}
}
\caption{Decay of the error in (a) the $L^\infty$-norm and (b) the $L^2$-norm, when approximating the solution of problem \eqref{PP} with B-splines of bidegree $(2,2)$ on tensor meshes (solid line) and \NtwoStwo{} LR-meshes (dashed line) for different levels of maximal resolution.
}\label{Poissonerr}
\end{figure}
In Figure~\ref{Poissonerr} we compare the $L^\infty$-norm and the $L^2$-norm of the error (Figures~\ref{Poissonerr}(a) and~\ref{Poissonerr}(b) respectively), using bidegree $(2,2)$ with global tensor meshes and local \NtwoStwo{} LR-meshes for different levels of maximal resolution to approximate the solution of the Poisson problem \eqref{PP}. The \NtwoStwo{} LR-mesh is computed by first applying the structured mesh to the LR B-splines whose supports intersect the curve where the sharp interior layer in the exact solution occurs, and then by performing \unilateral{} tensor expansions to recover the \NtwoS-property.
The error norms, which are computed discretely on a uniform grid of $1000\times 1000$ points, are plotted in log-log scale with respect to the number of LR B-splines on the mesh. The solid line with circular markers shows the decay when using global tensor meshes, whereas the dashed line with star markers shows the decay for the \NtwoStwo{} LR-meshes. In the figures, the first marker corresponds to the $4\times4$ tensor mesh, for maximal resolution level $\ell = 2$, and it is the maximal level for which the LR B-spline and standard tensor B-spline sets coincide. When considering a comparable number of functions, the \NtwoStwo{} LR-mesh leads to a significant reduction of both the $L^\infty$-norm and the $L^2$-norm of the error with respect to the tensor mesh, thanks to the adaptivity of the refinement.


\section{Conclusion}\label{sec:conclusion}
LR B-splines are one of the most elegant extensions of univariate B-splines on local tensor structures that allow local refinement.
They possess almost all the properties of classical B-splines, but they are not always linearly independent.
Recently, a characterization of LR-meshes ensuring local linear independence of the corresponding LR B-splines has been presented in the literature. However, a practical adaptive refinement strategy for LR-meshes that maintain such a property was missing.
In this paper, we have filled this gap by describing an adaptive refinement strategy that generates LR-meshes where the corresponding LR B-splines are locally linearly independent.
Subsequently, we have exploited the local linear independence of the LR B-splines to construct efficient quasi-interpolation schemes and to solve elliptic problems using the isogeometric Galerkin method.


\section*{Acknowledgements}
We are very grateful to Tor Dokken (SINTEF, Oslo) and Kjetil Andr\'e Johannessen (SINTEF, Trondheim) for sharing with us the example in Figure~\ref{LDstructured} that shows that the standard structured mesh refinement may produce linearly dependent sets of LR B-splines.


\begin{appendices}
\section{Nested LR B-splines}\label{sec:appendix}
The purpose of this appendix is to show the equivalence of the definition of nestedness used in this paper (Definition~\ref{nesteddef}) and the original definition provided in \cite[Definition~2.4]{someproperties} for LR B-splines.
The latter definition is formulated in terms of repeated knot insertion and, in view of \cite[Proposition~2.5]{someproperties}, it is equivalent to the  following definition.
\begin{definition}\label{nestedBressan}
Let $B[\pmb{x}^1, \pmb{y}^1]$ and $B[\pmb{x}^2, \pmb{y}^2]$ be two different tensor
B-splines.
Let $\mu_{\pmb{x}^k}(z)$ and $\mu_{\pmb{y}^k}(z)$, for $k=1,2$, denote the number of times $z \in \RR$ occurs in the vectors $\pmb{x}^k$ and $\pmb{y}^k$, respectively. Then, we say that $B[\pmb{x}^2, \pmb{y}^2]$ is \highlight{nested} in $B[\pmb{x}^1, \pmb{y}^1]$, and we write $B[\pmb{x}^2, \pmb{y}^2] \preceq B[\pmb{x}^1, \pmb{y}^1]$, if
\begin{enumerate}
\item\label{nestedBressan:a} $\left\{\begin{array}{ll}
    \mu_{\pmb{x}^2}(z) \geq \mu_{\pmb{x}^1}(z), & \forall\, z \in\; ]x_1^2, x_{p_1+2}^2[, \\[0.2cm]
    \mu_{\pmb{y}^2}(z) \geq \mu_{\pmb{y}^1}(z), & \forall\, z \in\; ]y_1^2, y_{p_2+2}^2[, \\[0.1cm]
    \end{array}
    \right.$
\item\label{nestedBressan:b} $\left\{\begin{array}{ll}
    \mu_{\pmb{x}^2}(z) \leq \mu_{\pmb{x}^1}(z), & \forall\, z \notin\; ]x_1^1,x_{p_1+2}^1[, \\[0.2cm]
    \mu_{\pmb{y}^2}(z) \leq \mu_{\pmb{y}^1}(z), & \forall\, z \notin\; ]y_1^1, y_{p_2+2}^1[. \\[0.1cm]
    \end{array}
    \right.$
\end{enumerate}
\end{definition}

We now prove the equivalence of the two definitions for LR B-splines.
\begin{prop}
 Given a mesh $\cM$, let $B[\pmb{x}^1, \pmb{y}^1]$ and $B[\pmb{x}^2, \pmb{y}^2]$ be two different LR B-splines defined on $\cM$. For $B[\pmb{x}^1, \pmb{y}^1]$ and $B[\pmb{x}^2, \pmb{y}^2]$, Definition~\ref{nesteddef} is equivalent to Definition~\ref{nestedBressan}.
\end{prop}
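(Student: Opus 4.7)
The plan is to prove both implications separately, using two standard facts from the LR B-spline framework: (i) the multiplicity of a meshline in $\cM(\pmb{x}^k,\pmb{y}^k)$ equals the multiplicity of the corresponding knot in $\pmb{x}^k$ or $\pmb{y}^k$, and (ii) since $B[\pmb{x}^k,\pmb{y}^k]$ is an LR B-spline, it has minimal support on $\cM$, so the multiplicities of its interior meshlines agree with those of $\cM$ and no collection of collinear meshlines outside $\cM(\pmb{x}^k,\pmb{y}^k)$ can disconnect $\supp B[\pmb{x}^k,\pmb{y}^k]$. I work only in the $x$-direction; the $y$-direction is strictly analogous.

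First I would assume Definition~\ref{nestedBressan} and derive Definition~\ref{nesteddef}. The inclusion $\supp B[\pmb{x}^2,\pmb{y}^2] \subseteq \supp B[\pmb{x}^1,\pmb{y}^1]$ comes from condition \ref{nestedBressan:b}: for $z < x_1^1$ we have $\mu_{\pmb{x}^1}(z)=0$, so $\mu_{\pmb{x}^2}(z)=0$, forcing $x_1^2 \geq x_1^1$, and similarly at all four endpoints. For the multiplicity condition on shared boundary meshlines, let $\gamma$ be a vertical meshline of $\cM$ in $\partial\supp B[\pmb{x}^1,\pmb{y}^1]\cap\partial\supp B[\pmb{x}^2,\pmb{y}^2]$ at abscissa $\alpha$; then $\alpha\in\{x_1^1,x_{p_1+2}^1\}\cap\{x_1^2,x_{p_1+2}^2\}$, so $\alpha\notin{]}x_1^1,x_{p_1+2}^1{[}$ and condition \ref{nestedBressan:b} yields $\mu_{\pmb{x}^2}(\alpha)\leq\mu_{\pmb{x}^1}(\alpha)$. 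By fact (i), these knot multiplicities are precisely the multiplicities of $\gamma$ in $\cM(\pmb{x}^2,\pmb{y}^2)$ and $\cM(\pmb{x}^1,\pmb{y}^1)$, so the required inequality holds.

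Conversely, I would assume Definition~\ref{nesteddef} and check conditions \ref{nestedBressan:a} and \ref{nestedBressan:b}. Condition \ref{nestedBressan:b} is easy: for $z$ strictly outside $[x_1^1,x_{p_1+2}^1]$ the inclusion $\supp B[\pmb{x}^2,\pmb{y}^2]\subseteq\supp B[\pmb{x}^1,\pmb{y}^1]$ gives $\mu_{\pmb{x}^2}(z)=0=\mu_{\pmb{x}^1}(z)$; for $z\in\{x_1^1,x_{p_1+2}^1\}$ either $z$ is not a knot of $\pmb{x}^2$, whence $\mu_{\pmb{x}^2}(z)=0$, or the corresponding vertical meshline lies in $\partial\supp B[\pmb{x}^1,\pmb{y}^1]\cap\partial\supp B[\pmb{x}^2,\pmb{y}^2]$ and the multiplicity clause of Definition~\ref{nesteddef}, translated via fact (i), gives $\mu_{\pmb{x}^2}(z)\leq\mu_{\pmb{x}^1}(z)$.

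The main obstacle is condition \ref{nestedBressan:a}, which I expect to require the minimal-support fact (ii). I would argue by contradiction: suppose there exists $z\in{]}x_1^2,x_{p_1+2}^2{[}$ with $\mu_{\pmb{x}^2}(z)<\mu_{\pmb{x}^1}(z)$. Since $\supp B[\pmb{x}^2,\pmb{y}^2]\subseteq\supp B[\pmb{x}^1,\pmb{y}^1]$ with strict inequalities at the endpoints we have $z\in{]}x_1^1,x_{p_1+2}^1{[}$, so by minimal support of $B[\pmb{x}^1,\pmb{y}^1]$ the $(1,z)$-meshlines in $\cM(\pmb{x}^1,\pmb{y}^1)$ have multiplicity $\mu_{\pmb{x}^1}(z)$ in $\cM$ and they span the full $y$-range of $\supp B[\pmb{x}^1,\pmb{y}^1]$, hence also of $\supp B[\pmb{x}^2,\pmb{y}^2]$. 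The excess multiplicity $\mu_{\pmb{x}^1}(z)-\mu_{\pmb{x}^2}(z)>0$ produces a collection of collinear meshlines of $\cM\setminus\cM(\pmb{x}^2,\pmb{y}^2)$ that cuts $\supp B[\pmb{x}^2,\pmb{y}^2]$ vertically from top to bottom and thus disconnects it, contradicting the third bullet of the minimal support definition for $B[\pmb{x}^2,\pmb{y}^2]$. Applying the same contradiction argument in the $y$-direction completes the equivalence.
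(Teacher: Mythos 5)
Your proof is correct and follows essentially the same route as the paper: support inclusion and the boundary-multiplicity clause from condition~\ref{nestedBressan:b}, and the minimal-support property of $B[\pmb{x}^2,\pmb{y}^2]$ to force condition~\ref{nestedBressan:a}, which the paper states directly (``otherwise the $(1,z)$-split would traverse $B^2$'') rather than by contradiction. The only quibble is that when $0<\mu_{\pmb{x}^2}(z)<\mu_{\pmb{x}^1}(z)$ the offending meshlines lie \emph{inside} $\cM(\pmb{x}^2,\pmb{y}^2)$ with deficient multiplicity, so the violation is of the second bullet of minimal support (or of the traversal definition), not the third; this is a labeling slip, not a gap.
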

\begin{proof}
Let $B^1:=B[\pmb{x}^1, \pmb{y}^1]$ and $B^2:=B[\pmb{x}^2, \pmb{y}^2]$ be two LR B-splines defined on the mesh $\cM$. Assuming that they satisfy the conditions in Definition~\ref{nestedBressan}, we prove that they also satisfy the conditions in Definition~\ref{nesteddef}. Let us first show that $\supp B^2 \subseteq \supp B^1$. This means that $[x_1^2, x_{p_1+2}^2]\subseteq [x_1^1, x_{p_1+2}^1]$ and $[y_1^2, y_{p_2+2}^2]\subseteq [y_1^1, y_{p_2+2}^1]$. Suppose $x_1^2 < x_1^1$. Then, $\mu_{\pmb{x}^1}(x_1^2) = 0$ and $\mu_{\pmb{x}^2}(x_1^2) > 0$, but this contradicts item~\ref{nestedBressan:b} in Definition \ref{nestedBressan}, and hence $x_1^2 \geq x_1^1$. The other inequalities can be proved in a similar way to have the interval inclusions.
Let now $\gamma \subseteq \partial \supp B^1 \cap \partial \supp B^2$. Assume without loss of generality that $\gamma$ is a $1$-meshline. Then, $\gamma$ is a $(1,z)$-meshlines for some $z \in \{x_1^1, x_{p_1+2}^1\}$. For any choice of such $z$, we have $\mu_{\pmb{x}^2}(z) \leq \mu_{\pmb{x}^1}(z)$, by item~\ref{nestedBressan:b} of Definition \ref{nestedBressan}, and, by the definition of multiplicity of $\gamma$ in $\cM(\pmb{x}^1, \pmb{y}^1)$ and $\cM(\pmb{x}^2, \pmb{y}^2)$, this implies that $\mu(\gamma)$ is higher (or equal) in $\cM(\pmb{x}^1, \pmb{y}^1)$ than in $\cM(\pmb{x}^2, \pmb{y}^2)$. Therefore, both the conditions in Definition~\ref{nesteddef} are satisfied if those in Definition~\ref{nestedBressan} are satisfied.

Let us now show the converse. Assuming that the conditions in Definition~\ref{nesteddef} are fulfilled, we prove that the conditions in Definition~\ref{nestedBressan} are satisfied as well. 
Let $z \in\; ]x_1^2, x_{p_1+2}^2[$. 
Since $\supp B^2 \subseteq \supp B^1$, we have $]x_1^2, x_{p_1+2}^2[ \;\subseteq\; ]x_1^1, x_{p_1+2}^1[$. If $z \notin \pmb{x}^1$, then $\mu_{\pmb{x}^1}(z) = 0$ and therefore $\mu_{\pmb{x}^2}(z) \geq \mu_{\pmb{x}^1}(z)$ for any value of $\mu_{\pmb{x}^2}(z)$. If $z\in\pmb{x}^1$, then it must be also in $\pmb{x}^2$, otherwise the $(1,z)$-split $\{z\}\times [y_1^1, y_{p_2+2}^1]$ would traverse $B^2$, which would not have minimal support. For the same reason, it must also hold that $\mu_{\pmb{x}^2}(z) = \mu_{\pmb{x}^1}(z)$. This proves item~\ref{nestedBressan:a} of Definition \ref{nestedBressan}.
Assume now $z \notin [x_1^1,x_{p_1+2}^1]$. Since $\supp B^2 \subseteq \supp B^1$, we have $x_1^1 \leq x_1^2$ and $x_{p_1+2}^2 \leq x_{p_1+2}^1$. Therefore, $\mu_{\pmb{x}^1}(z) = \mu_{\pmb{x}^2}(z) = 0$. If $z \in \{x_1^1, x_{p_1+2}^1\}$ but $z \notin \pmb{x}^2$, then trivially $\mu_{\pmb{x}^2}(z) \leq \mu_{\pmb{x}^1}(z)$. If $z \in \pmb{x}^2$, then $z$ corresponds to $(1,z)$-meshlines in $\partial \supp B^1 \cap \partial \supp B^2$. By assumption, these meshlines have a higher (or equal) multiplicity in $\cM(\pmb{x}^1, \pmb{y}^1)$ than in $\cM(\pmb{x}^2, \pmb{y}^2)$, which means $\mu_{\pmb{x}^2}(z) \leq \mu_{\pmb{x}^1}(z)$. The same line of arguments also applies to the knots in the second direction. This proves item~\ref{nestedBressan:b} of Definition~\ref{nestedBressan}.
\end{proof}

\end{appendices}


\bibliography{biblio}
\end{document}